\author{Rafael Torres and Jonathan Yazinski}
\title[Symplectic geography in dimensions 4 and 6]{Geography of symplectic 4- and 6-manifolds.}
\address{Mathematical Institute - University of Oxford\\ 24-29 St Giles'\\Oxford\\OX1 3LB\\England}
\email{torres@maths.ox.ac.uk}
\address{McMaster University - Dept. Mathematics and Statistics\\ 1280 Main Street West\\L8S 4K1\\Hamilton, ON.}
\email{yazinski@math.mcmaster.ca}
\date{December 15, 2010. Revised: July 10, 2011}
\subjclass[2010]{Primary 57R17; Secondary 57R55, 57M50}
\keywords{Symplectic geography, symplectic sums, symplectic structures.}
\thanks{\`A toi: tu me manques encore, repose en paix.}
\theoremstyle{plain}
\newtheorem{theorem}[equation]{Theorem}
\newtheorem{corollary}[equation]{Corollary}
\newtheorem{proposition}[equation]{Proposition}
\newtheorem{lemma}[equation]{Lemma}
\newtheorem{remark}[equation]{Remark}
\theoremstyle{definition}
\newtheorem{definition}[equation]{Definition}
\newtheorem{example}[equation]{Example}
\newcommand{\Q}{\mathbb{Q}}
\newcommand{\R}{\mathbb{R}}
\newcommand{\Z}{\mathbb{Z}}
\newcommand{\N}{\mathbb{N}}
\begin{document}

\maketitle

The geography of minimal symplectic 4-manifolds with arbitrary fundamental group and symplectic 6-manifolds with abelian fundamental group of small rank, and with arbitrary fundamental group are addressed.


\section{Introduction}{{\label{Intro}}


The geography problem for simply connected irreducible symplectic 4-manifolds with negative signature has been settled in its entirety \cite{[Go], [FS], [PS], [JP1], [A1], [FPS], [BK0], [BK], [ABBKP], [AP]}. Much less is known about the nonsimply connected realm. It has been observed that the new construction techniques \cite{[Go], [Lu], [ADK]} prove to be rather effective when one wishes to study the symplectic geography problem for a variety of choices of fundamental groups \cite{[Go], [BK], [ABBKP], [AP], [T0], [T1]}.\\

Concerning symplectic 4-manifolds with prescribed fundamental group, we obtained the following theorem.\\

\begin{theorem}{\label{Theorem 1}} Let $G$ be a group with a presentation consisting of $g$ generators and $r$ relations. Let $e, \sigma$ be integers that satisfy $2e + 3\sigma \geq 0, e + \sigma \equiv 0$ mod 4, $e + \sigma \geq 8$, and assume $\sigma \leq - 1$. Then there exists a minimal symplectic 4-manifold $M(G)$ with fundamental group $\pi_1(M(G)) \cong G$, characteristic numbers
\begin{center}
 $(c_1^2(M(G)), \chi_h(M(G))) = (2e + 3\sigma +4(g + r), \frac{1}{4}(e + \sigma) + (g + r))$,

\end{center}
and with odd intersection form over $\Z$. If $G$ is finite, then $M(G)$ admits exotic smooth structures.\\

Assume $s\geq 1, n\in \N$. For each of the following pairs of integers
\begin{center}
$(c, \chi) = (8n - 8 + 8(g + r), 2s + n - 1 + (g + r))$,
\end{center}
there exists a spin symplectic 4-manifold $X(G)$ with $\pi_1(X(G)) \cong G$, and characteristic numbers
\begin{center}
$(c_1^2(X(G)), \chi_h(X(G))) = (c, \chi)$.
\end{center}
Moreover, there is an infinite set $\{X_m(G)): m\in \N\} $ of pairwise nondiffeomorphic symplectic and nonsymplectic manifolds homeomorphic to $X(G)$.\\
\end{theorem}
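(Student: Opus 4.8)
The plan is to realize $X(G)$ by a single Gompf (symplectic) sum construction and then to produce the infinite family by Fintushel--Stern knot surgery. Since $c_1^2=8\chi_h+\sigma$, the target $(c,\chi)=(8n-8+8(g+r),\,2s+n-1+(g+r))$ has signature $\sigma=-16s$, so $16\mid\sigma$ as Rokhlin demands. I would use two spin building blocks: the elliptic surface $E(2s)$, with $(c_1^2,\chi_h)=(0,2s)$ and $\sigma=-16s$, and the product $T^2\times\Sigma_2$, with $(c_1^2,\chi_h)=(0,0)$, which carries the square-zero symplectic tori $T^2\times\{\mathrm{pt}\}$ together with a $T^2$-family of parallel square-zero symplectic genus-two surfaces $\{\mathrm{pt}\}\times\Sigma_2$. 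Setting $N:=n+g+r$, I form a chain of $N$ copies of $T^2\times\Sigma_2$ by $N-1$ symplectic sums along such genus-two surfaces; by the Gompf formulas a genus-two sum sends $(c_1^2,\chi_h)$ to $(c_1^2(X_1)+c_1^2(X_2)+8,\ \chi_h(X_1)+\chi_h(X_2)+1)$, so the chain has invariants $(8(N-1),N-1)$. Torus-summing $E(2s)$ onto one block along a square-zero torus $T^2\times\{\mathrm{pt}\}$ then adds $(0,2s)$ and the signature $-16s$, giving total $(8(N-1),\,2s+N-1)=(c,\chi)$.

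Every gluing surface is a square-zero spin surface inside spin summands, so the gluings can be chosen to respect spin structures and $X(G)$ is spin (Gompf's spin criterion); Luttinger surgeries, being $0$-surgeries on Lagrangian tori, change neither the spin type nor the characteristic numbers. It remains to pin down $\pi_1$. Each $T^2\times\Sigma_2$ contributes the generators of its $T^2$-factor, the genus-two sums amalgamate the surface groups, and the torus sum grafts on the simply connected complement of a fiber in $E(2s)$; following the telescoping technique of Gompf and Baldridge--Kirk I would perform Luttinger surgeries on the Lagrangian tori $\gamma\times\delta$ of the blocks so that the first $n$ blocks together with $E(2s)$ contribute trivial $\pi_1$, while the remaining $g+r$ blocks realize exactly the generators $x_1,\dots,x_g$ and impose the relators $w_1,\dots,w_r$. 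The Seifert--van Kampen bookkeeping certifying that the group is $G$, rather than a proper quotient of some larger group, is the delicate point and I expect it to be the main obstacle.

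For the infinite family I would take $T$ to be a cusp fiber torus of the $E(2s)$ summand: it is symplectic of square zero, its meridian bounds in $E(2s)\setminus T$ and hence is nullhomotopic, and $\pi_1(X(G)\setminus T)\cong G$. Fintushel--Stern knot surgery along $T$ with knots $K_m$ of pairwise distinct Alexander polynomials leaves $\pi_1$, the intersection form, and the spin type unchanged, so each $X_m(G):=X(G)_{K_m}$ is homeomorphic to $X(G)$, while the product formula $SW_{X_m(G)}=SW_{X(G)}\cdot\Delta_{K_m}(t)$ distinguishes their smooth structures. Choosing $K_m$ fibered makes $X_m(G)$ symplectic and the remaining (non-fibered) choices give the nonsymplectic members, producing the required mixed infinite set.

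The odd, minimal assertion follows from the same scheme: take the base from the settled simply connected geography of minimal symplectic $4$-manifolds of odd type with $\sigma\le-1$ realizing $(2e+3\sigma,\tfrac14(e+\sigma))$ under the stated inequalities, and install the $g+r$ tokens by the analogous symplectic sums with suitable odd building blocks, each contributing $(4,1)$ and keeping the intersection form odd. Usher's theorem secures minimality, since every sum is taken along a surface of positive genus that is not an exceptional sphere, and for finite $G$ the knot surgery above supplies the exotic smooth structures, Freedman's theorem applying because finite groups are good. The only point beyond the van Kampen computation that demands care is that, for arbitrary $G$, the coincidence of $\pi_1$, intersection form and $w_2$ must be promoted to a genuine homeomorphism $X_m(G)\cong X(G)$; this is where the available four-manifold classification results must be invoked.
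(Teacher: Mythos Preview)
Your numerics are fine and the overall architecture (simply connected block plus an arbitrary-$\pi_1$ block, glued by a torus sum, then knot surgery for exotica) is exactly what the paper does. But there are two genuine gaps.

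\textbf{Realizing the relators.} Your $g+r$ copies of $T^2\times\Sigma_2$ with Luttinger surgery will not produce an arbitrary finitely presented $G$. The Lagrangian tori available in $T^2\times\Sigma_2$ have push-offs which are the \emph{simple} loops $x,y,a_i,b_i$ (Proposition~\ref{Proposition 5}); Luttinger surgery on such a torus imposes a relation of the form $\mu\cdot(\text{simple loop})^k=1$, never an arbitrary word $w_j(x_1,\dots,x_g)$. Chaining blocks by genus-two sums amalgamates the surface groups but gives you no mechanism for making a torus in block $j$ carry a loop that reads off $w_j$ in the generators living in other blocks. This is precisely why the paper does \emph{not} use telescoping here: the arbitrary-$\pi_1$ input is the Baldridge--Kirk block $BK$ of Section~\ref{Section 2.2}, built from a surface bundle $Y\to S^1$ whose monodromy and fiber are tailored so that there exist symplectic tori $T_0,\dots,T_{g+r}\subset Y\times S^1$ whose $\pi_1$-generators are exactly $s,t$ and the relator curves $\gamma_i$. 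One then sums $g+r$ copies of a small spin block onto the $T_i$ to kill those words, and a single torus sum with the simply connected block $Z$ (from Park--Szab\'o in the spin case, from Proposition~\ref{Proposition 11} in the odd case) kills $s,t$. That is the step you flagged as ``the main obstacle''; your proposed mechanism does not clear it.

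\textbf{Promoting knot surgery to a homeomorphism.} For arbitrary (infinite) $G$ there is no Freedman-type classification you can appeal to, and you correctly identify this as a problem. The paper's solution is not a classification theorem for non-simply connected $4$-manifolds at all: one confines the knot surgery to the simply connected block $Z$, which contains a Gompf nucleus $C_n$, and writes $X_K(G)=Z_K^{\circ}\cup_{\Sigma}(BK\#_T C_n)$ with $\Sigma$ a homology $3$-sphere. The piece $Z_K^{\circ}$ is simply connected with unimodular indefinite form independent of $K$, so Boyer's theorem (Theorem~\ref{Theorem Bo}, Corollary~\ref{Corollary Bo}) furnishes a homeomorphism $Z_{K_i}^{\circ}\to Z_{K_j}^{\circ}$ restricting to the identity on $\Sigma$; gluing it to the identity on $BK\#_T C_n$ gives $X_{K_i}(G)\cong X_{K_j}(G)$. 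For the odd/finite-$G$ case the paper instead pigeonholes via Hambleton--Kreck (finitely many homeomorphism types with given finite $\pi_1$ and Euler characteristic), not a direct Freedman argument.
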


If $G$ is a residually finite group, the work of M. J. D. Hamilton and D. Kotschick \cite{[HKo]} implies that the manifolds constructed are irreducible. Theorem \ref{Theorem 1} extends earlier results \cite{[Go], [BK1], [JP], [BK]}; we give a brief account of them in order to make clear the contribution of the previous theorem. R. E. Gompf proved the well-known result that any finitely presented group can be the fundamental group of a symplectic $2n$-manifold for $n\geq 2$ \cite[Section 6]{[Go]}. S. Baldridge and P. Kirk took on the endeavor of minimizing the Euler characteristic with respect to the generators and relations of a presentation of the fundamental group in \cite{[BK1]}, using the elliptic surface $E(1)$ and the manifold described in Section \ref{Section 2.2} as building blocks. They constructed a minimal symplectic manifold with Euler characteristic $12 + 12(g + r)$, and signature $- 8 -8(g + r)$.  J. Park systematically studied the geography in \cite[Theorem 1]{[JP]} using the construction in \cite{[BK1]} as a building block. Moreover, his result also addresses the existence of a myriad of exotic smooth structures by establishing a homeomorphism on the manifolds he constructed, using the results of S. Boyer \cite{[B]} on homeomorphisms of simply connected 4-manifolds with a given boundary.\\

Using a minimal symplectic manifold homeomorphic to $3\mathbb{CP}^2 \# 5 \overline{\mathbb{CP}^2}$ as a building block instead of $E(1)$,  Baldridge and Kirk constructed a smaller minimal symplectic example with Euler characteristic $10 + 6(g + r)$, and signature $-2 - 2(g + r)$ in \cite[Theorem 24]{[BK]} ($(c_1^2, \chi_h) = (14 + 6(g + r), 2 + (g + r))$ under the coordinates used in the statement of Theorem \ref{Theorem 1}), . In \cite{[Y]}, the second author of the present paper improved the known bounds by producing a minimal symplectic example with Euler characteristic $10 + 4(g + r)$, and signature $-2$ $((c_1^2, \chi_h) = (14 + 8(g + r), 2 + (g + r)))$. His construction uses as a building block the symplectic manifold sharing the homology of $S^2\times S^2$ built by  R. Fintushel, B. D. Park, and R. J. Stern in \cite[Section 4]{[FPS]}; see Section \ref{Section 2.2}. Theorem \ref{Theorem 1} offers an improvement and extends the results in \cite{[BK1], [JP], [BK], [Y]}.\\

For known fundamental-group-dependent bounds on the geographical regions that can be populated by minimal symplectic manifolds with prescribed fundamental group and comparisons to fundamental-group-dependent results on the existence of smooth manifolds in dimension four, the reader is directed to the work of C. H. Taubes \cite{[Ta1]}, of S. Baldridge and P. Kirk \cite{[BK1]}, of D. Kotschick \cite{[DK]}, and of P. Kirk and C. Livingston \cite{[KL], [KL1]}.\\

We now direct our attention to high dimensional manifolds, where existence results for symplectic 6-manifolds is somewhat uncharted territory. The geography of simply connected symplectic 6-manifolds was settled by M. Halic in \cite{[MH]} (see also \cite{[BH], [OV]}), where he proved that any triple $(a, b, c) \in \Z \oplus \Z \oplus \Z$ satisfying the arithmetic conditions $a \equiv c \equiv 0$ mod 2, and $b \equiv 0$ mod 24 can be realized by a simply connected symplectic 6-manifold with Chern numbers $(c_1^3, c_1c_2, c_3) = (a, b, c)$.  Although there is no established definition of minimality in dimension six yet (see \cite[Definition 5.2]{[TY]}), the existence result of Halic relies heavily on blow ups along points and along surfaces of well chosen examples. In particular,  the manifolds constructed in \cite{[MH]} are not minimal (\cite[Lemma 4.1]{[MH]}.) This scenario motivates interest for constructions of examples  that do not involve blow ups. In this direction, we obtained the following two results.\\

First, by building greatly upon the aforementioned work on the geography of simply connected 4-manifolds, we have the following theorem. 


\begin{theorem}{\label{Theorem 2}} Let $G$ be a group among the choices $\{\Z_p, \Z_p \oplus \Z_q, \Z_p \oplus \Z, \Z, \pi_1(\Sigma_g), F_n\}$, where $\Sigma_g$ is a closed oriented surface of genus $g$, and $F_n$ is the free group of rank $n$. Let $e_i, \sigma_i$ $i =1, 2$ be integers that satisfy $2e_i + 3 \sigma_i \geq 0, e_i + \sigma_i \equiv 0$ mod 4, and $e_i + \sigma_i \geq 8$. There exist symplectic 6-manifolds $W_0(G), W_1(G), W_2(G)$ with fundamental group $\pi_1(W_i(G)) \cong G$ for $i = 0, 1, 2$, and the following Chern numbers:

\begin{enumerate}

\item 
$c_1^3(W_0(G)) = 18\cdot (\sigma_1 + \sigma_2) + 12\cdot (e_1 + e_2)$\\ 
$c_1c_2(W_0(G)) = 6 \cdot (e_1 + \sigma_1 + e_2 + \sigma_2)$, and\\
$c_3(W_0(G)) = 2\cdot (e_1 + e_2)$;\\

\item 
$c_1^3(W_1(G)) = 0$,\\
$c_1c_2(W_1(G)) = 0$, and\\
$c_3(W_1(G)) = 0$;\\

\item 
$c_1^3(W_2(G)) =  - 18\cdot (\sigma_1 + \sigma_2) - 12 \cdot (e_1 + e_2) - 48$,\\ 
$c_1c_2(W_2(G)) =  - 6 \cdot (e_1 + \sigma_1 + e_2 + \sigma_2) - 24$, and\\
$c_3(W_2(G)) =  - 2\cdot (e_1 + e_2) - 8$.\\
\end{enumerate}

Assume $s_1, s_2\geq 1$, and $n_1, n_2\in \N$. There exist spin symplectic 6-manifolds $Y_i(G)$ for $i = 0, 1, 2$ with fundamental group $\pi_1(Y_i(G)) \cong G$, and whose Chern numbers are\\
\begin{enumerate}
\item 
$c_1^3(Y_0(G)) = 48 \cdot (n_1 + n_2 - 2)$,\\
$c_1c_2(Y_0(G)) = 48 \cdot (s_1 + s_2) + 24\cdot (n_1 + n_2 - 2)$, and\\
$c_3(Y_0(G)) = 48\cdot (s_1 + s_2) + 8 \cdot (n_1 + n_2 - 2)$;\\

\item 
$c_1^3(Y_1(G)) = 0$,\\
$c_1c_2(Y_1(G)) = 0$, and\\
$c_3(Y_1(G)) = 0$;\\

\item 
$c_1^3(Y_2(G)) =  - 48 \cdot (n_1 + n_2 - 1)$,\\
$c_1c_2(Y_2(G)) =  - 48 \cdot (s_1 + s_2) - 24 \cdot (n_1 + n_2 - 1)$, and\\
$c_3(Y_2(G)) =  - 48\cdot (s_1 + s_2) - 8(n_1 + n_2 - 1)$.\\
\end{enumerate}

\end{theorem}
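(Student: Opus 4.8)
The plan is to reduce everything to the four-dimensional building blocks furnished by Theorem \ref{Theorem 1} and to realize each prescribed triple of Chern numbers by crossing a suitable symplectic $4$-manifold with a closed oriented surface $\Sigma_h$ of genus $h\in\{0,1,2\}$, the three values of $h$ accounting for $W_0,W_1,W_2$ (respectively $Y_0,Y_1,Y_2$). Concretely, for a fixed $G$ I would first use Theorem \ref{Theorem 1} to produce two symplectic $4$-manifolds $Z_1,Z_2$ realizing the data $(e_1,\sigma_1)$ and $(e_2,\sigma_2)$ (the arithmetic hypotheses on $e_i,\sigma_i$ are exactly those under which Theorem \ref{Theorem 1} applies), each containing a symplectic surface of square zero along which a Gompf symplectic fiber sum $Z=Z_1\#_{\Sigma_\gamma}Z_2$ is performed. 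The desired $6$-manifolds are then the products $Z\times\Sigma_h$, with the genus $\gamma$ of the summing surface taken to be $\gamma=1$ for $W_0/Y_0$ and $\gamma=2$ for $W_2/Y_2$.

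Second, I would read off the Chern numbers from the Whitney formula for a product. Writing $t=c_1(\Sigma_h)$ with $\int_{\Sigma_h}t=2-2h$, and using $c_1^2(Z)=2e+3\sigma$, $c_2(Z)=e$, one has $c(T(Z\times\Sigma_h))=(1+c_1(Z)+c_2(Z))(1+t)$, so that
\begin{align*}
c_3 &= e(Z)\,(2-2h),\\
c_1c_2 &= 3\,(e(Z)+\sigma(Z))\,(2-2h),\\
c_1^3 &= 3\,(2e(Z)+3\sigma(Z))\,(2-2h).
\end{align*}
The signature is additive under fiber sum along a square-zero surface, giving $\sigma(Z)=\sigma_1+\sigma_2$, while $e(Z)=e_1+e_2-2(2-2\gamma)$ equals $e_1+e_2$ for $\gamma=1$ and $e_1+e_2+4$ for $\gamma=2$. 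Substituting $h=0,1,2$ reproduces the three stated triples verbatim: the factor $2-2h$ vanishes at $h=1$, forcing all Chern numbers of $W_1$ and $Y_1$ to be zero, and the shift by $+4$ supplied by the genus-two sum produces exactly the additive constants $-48,-24,-8$ appearing in $W_2$ and $Y_2$. The spin families $Y_i$ follow by the identical recipe applied to the spin blocks $X(G)$ of Theorem \ref{Theorem 1}; since $S^2,T^2,\Sigma_2$ are spin and the sum is along a square-zero surface one checks $w_2=0$ on each $Y_i$, and substituting the Chern data of $X(G)$ yields the four listed $Y$-triples.

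Third, and this is where the restriction of $G$ to the list $\{\Z_p,\Z_p\oplus\Z_q,\Z_p\oplus\Z,\Z,\pi_1(\Sigma_g),F_n\}$ becomes essential, I would control $\pi_1$. For the genus-zero factor one has $\pi_1(Z\times S^2)\cong\pi_1(Z)$, so it suffices to arrange $\pi_1(Z)\cong G$ in the $4$-manifold, which Theorem \ref{Theorem 1} together with Luttinger surgery delivers without disturbing $e$ and $\sigma$. The real difficulty is the genus-one and genus-two factors, where the naive product contributes an unwanted $\pi_1(\Sigma_h)$ summand that must be annihilated. I would handle this by building $Z$ from standard Lagrangian blocks such as $T^2\times\Sigma_g$ and $T^2\times T^2$, on which the Luttinger surgeries installing $G$ can be chosen so that, via van Kampen applied to the complement of the summing surface, the generators of $\pi_1(\Sigma_h)$ together with the meridian of the sum die. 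The listed groups are precisely those for which such a compatible system of surgeries and vanishing relations can be written down explicitly, which is the reason the theorem is stated only for them.

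The step I expect to be the main obstacle is exactly this last one: verifying, case by case, that for each admissible $G$ there is a choice of summing surface, Lagrangian tori, and Luttinger surgeries that simultaneously installs $\pi_1\cong G$, forces the surface-factor generators to vanish, keeps the construction symplectic, and leaves $e$ and $\sigma$ untouched so that the Chern-number computation above goes through unchanged. The Chern-number bookkeeping is routine once the manifolds are in hand; the fundamental-group analysis, carried out through van Kampen's theorem on the complements of the summing surfaces, is the technical heart of the argument.
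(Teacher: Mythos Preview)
Your Chern-number bookkeeping is fine, and in fact the identity
\[
(Z_1\#_{\Sigma_\gamma}Z_2)\times\Sigma_h \;\cong\; (Z_1\times\Sigma_h)\ \#_{\Sigma_\gamma\times\Sigma_h}\ (Z_2\times\Sigma_h)
\]
shows that numerically your construction and the paper's coincide. The genuine gap is in the fundamental-group step for $h=1,2$. If the $6$-manifold is literally the product $Z\times\Sigma_h$, then $\pi_1(Z\times\Sigma_h)\cong\pi_1(Z)\times\pi_1(\Sigma_h)$, full stop. The generators of $\pi_1(\Sigma_h)$ live in the external surface factor and are not touched by any Luttinger surgery, telescoping, or van Kampen argument performed inside $Z$; no choice of blocks for $Z$ can make loops in $\{z\}\times\Sigma_h$ nullhomotopic. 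So your proposed mechanism for ``annihilating the unwanted $\pi_1(\Sigma_h)$ summand'' cannot work as stated, and the manifolds you build for $i=1,2$ have $\pi_1\cong G\times\pi_1(\Sigma_h)$ rather than $G$.

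What the paper does differently is precisely to abandon the product description at the level of the gluing: one forms the $6$-dimensional symplectic sum
\[
X_1(G)\times\Sigma_h\ \#_{\,T\times\Sigma_h\,=\,T'\times\Sigma_h\,}\ X_2\times\Sigma_h
\]
but chooses the identification $\phi_h$ of the boundaries of the tubular neighbourhoods so that the $\pi_1(\Sigma_h)$-generators on the $X_1(G)$ side are sent to push-offs of generators of $\pi_1(T')$ on the $X_2$ side, which are already trivial in $\pi_1(X_2-T')\cong\{1\}$, and vice versa (Lemma~\ref{Lemma F}). This ``swap'' gluing is exactly what kills the surface factor; with the product gluing it would survive. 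The missing idea in your proposal, then, is not a more clever choice of $4$-manifold $Z$ but the use of a non-product gluing diffeomorphism in the $6$-dimensional sum.
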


Next, the current understanding on the geography of minimal symplectic 4-manifolds with arbitrary fundamental group (\cite{[Go], [BK1], [JP], [Y]}, Theorem \ref{Theorem 1}) is used to study the existence of symplectic 6-manifolds with prescribed fundamental group. Such enterprise had been pursued previously by R. E. Gompf in his seminal paper \cite[Theorem 7.1]{[Go]}. The result obtained in this direction is the following theorem.

\begin{theorem}{\label{Theorem 4}}   Let $G$ be a group with a presentation that consists of $g$ generators and $r$ relations. Let $e_i, \sigma_i$ $i =1, 2$ be integers that satisfy $2e_i + 3 \sigma_i \geq 0, e_i + \sigma_i \equiv 0$ mod 4, and $e_i + \sigma_i \geq 8$. There exist symplectic 6-manifolds $W_0(G), W_1(G), W_2(G)$ with fundamental group $\pi_1(W_i(G)) \cong G$ for $i = 0, 1, 2$, and the following Chern numbers:
\begin{enumerate}

\item 
$c_1^3(W_0(G)) = 18\cdot (\sigma_1 + \sigma_2) + 12 \cdot (e_1 + e_2) + 48 \cdot (g + r)$,\\ 
$c_1c_2(W_0(G)) = 6\cdot (e_1 + \sigma_1 + e_2 + \sigma_2) + 24 \cdot (g + r)$, and\\
$c_3(W_0(G)) = 2\cdot (e_1 + e_2) + 8 \cdot (g + r)$;\\

\item 
$c_1^3(W_1(G)) = 0$,\\
$c_1c_2(W_1(G)) = 0$, and\\
$c_3(W_1(G)) = 0$;\\

\item 
$c_1^3(W_2(G)) =  - 18\cdot (\sigma_1 + \sigma_2) - 12 \cdot (e_1 + e_2) - 48 \cdot (g + r) - 48$,\\ 
$c_1c_2(W_2(G)) =  - 6 \cdot (e_1 + \sigma_1 + e_2 + \sigma_2) - 24 \cdot (g + r) - 24$, and\\
$c_3(W_2(G)) =  - 2\cdot (e_1 + e_2) - 8 \cdot (g + r)$;\\
\end{enumerate}

Assume $s_1, s_2\geq 1$, and $n_1, n_2\in \N$. There exist spin symplectic 6-manifolds $Y_i(G)$ for $i = 0, 1, 2$ with fundamental group $\pi_1(Y_i(G)) \cong G$, and whose Chern numbers are\\
\begin{enumerate}
\item 
$c_1^3(Y_0(G)) = 48\cdot (n_1 + n_2 - 2) + 48(g + r)$,\\
$c_1c_2(Y_0(G)) = 48\cdot (s_1 + s_2) + 24\cdot (n_1 + n_2 - 2) + 24 \cdot (g + r)$, and\\
$c_3(Y_0(G)) = 48\cdot (s_1 + s_2) + 8\cdot (n_1 + n_2 - 2) + 8\cdot (g + r)$;\\

\item 
$c_1^3(Y_1(G)) = 0$,\\
$c_1c_2(Y_1(G)) = 0$, and\\
$c_3(Y_1(G)) = 0$;\\

\item 
$c_1^3(Y_2(G)) =  - 48\cdot (n_1 + n_2 - 2) - 48 \cdot (g + r) - 48$,\\
$c_1c_2(Y_2(G)) =  - 24\cdot (n_1 + n_2 - 2) - 48 \cdot (s_1 + s_2) - 24 \cdot (g + r) - 24$, and\\
$c_3(Y_2(G)) =  -48 \cdot (s_1 + s_2) - 8 \cdot (n_1 + n_2 - 2) - 8\cdot (g + r) - 8$.\\
\end{enumerate}

\end{theorem}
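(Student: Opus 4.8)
The plan is to realize the nine manifolds as symplectic products $Z \times \Sigma_h$ of a symplectic $4$-manifold $Z$ with a closed oriented surface $\Sigma_h$ of genus $h$, cured of unwanted fundamental group by surgery, exactly as in the proof of Theorem \ref{Theorem 2} but with the group-realizing factor upgraded to carry an arbitrary presentation. The three sign patterns $W_0/Y_0$, $W_1/Y_1$, $W_2/Y_2$ are to arise from the three signs of the Euler number $2-2h$ of the surface factor: $h=0$ (the sphere, positive), $h=1$ (the torus, zero) and $h \geq 2$ (negative). For a product the total Chern class factors as $c(Z\times\Sigma_h)=\pi_Z^{\ast}c(Z)\cdot\pi_\Sigma^{\ast}c(\Sigma_h)$ with $c_1(\Sigma_h)^2=0$ and $\langle c_1(\Sigma_h),[\Sigma_h]\rangle = 2-2h$, so a one-line expansion gives $c_3 = (2-2h)\chi(Z)$, $c_1 c_2 = (2-2h)(c_1^2(Z)+\chi(Z))$ and $c_1^3 = 3(2-2h)c_1^2(Z)$. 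Substituting $c_1^2(Z)=2\chi(Z)+3\sigma(Z)$ makes every Chern number a linear function of $\chi(Z)$ and $\sigma(Z)$ alone, so the entire problem reduces to producing a symplectic $4$-manifold $Z$ with prescribed $\chi(Z)$, $\sigma(Z)$, $\pi_1(Z)\cong G$, and (for the spin families) $w_2(Z)=0$.

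I would build $Z$ as a $4$-dimensional symplectic sum \cite{[Go]} of a simply connected piece realizing $(e_1,\sigma_1)$, drawn from the settled simply connected geography \cite{[FPS]}, and a piece carrying $\pi_1\cong G$ and realizing $(e_2,\sigma_2)$; Novikov additivity and the Euler-characteristic gluing rule then give $\sigma(Z)$ and $\chi(Z)$ as the appropriate sums. The quantitative crux is that the group can be installed at signature cost $0$ and Euler-characteristic cost exactly $4(g+r)$ over a simply connected model: this is exhibited by Yazinski's building block \cite{[Y]} (intrinsic invariants $\sigma=-2$, $\chi=10+4(g+r)$), which combined with the simply connected geography realizes arbitrary admissible $(e_2,\sigma_2)$ at this marginal cost; in the spin case the corresponding block is the family $X(G)$ of Theorem \ref{Theorem 1}, for which one computes $\sigma=-16s$ and $\chi=24s+4n-4+4(g+r)$. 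Multiplying by $S^2$ turns this $(4(g+r),0)$ excess in $(\chi,\sigma)$ into the increment $(48,24,8)\cdot(g+r)$ in $(c_1^3,c_1c_2,c_3)$, which is precisely the difference between the numbers of Theorem \ref{Theorem 4} and those of Theorem \ref{Theorem 2}; the higher-genus factor returns the negatives and the torus factor annihilates everything. The residual constants $(-48,-24,0)$ and their spin analogues appearing in $W_2$ and $Y_2$ are to be produced by a fixed auxiliary summand, whose Chern numbers I would pin down by a direct computation.

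The fundamental group is controlled case by case. For the sphere families $W_0$ and $Y_0$ the sphere is simply connected, so $\pi_1(Z\times S^2)\cong\pi_1(Z)\cong G$ with no further work. For the torus and higher-genus families the product carries the extra factor $\pi_1(\Sigma_h)$, which I would remove by Luttinger surgeries \cite{[Lu]} along Lagrangian tori of the form (a loop in $\Sigma_h$) $\times$ (a loop in $Z$): such surgeries are symplectic and leave $\chi$, $\sigma$, hence all three Chern numbers, unchanged, while by van Kampen they kill the generators of $\pi_1(\Sigma_h)$ and leave $G$ untouched, yielding $\pi_1\cong G$. In the spin cases one checks in addition that $X(G)$, the surface factors and the surgery tori all keep $w_2=0$, using that a product of spin manifolds is spin and that Luttinger surgery along the chosen tori preserves the spin condition.

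The step I expect to be the main obstacle is the simultaneous bookkeeping of the fundamental group, the Chern numbers and spinness: one must arrange the $4$-dimensional symplectic sum that assembles $Z$, and the subsequent Luttinger surgeries, so that van Kampen returns exactly $G$ rather than a larger amalgamated product, while the numerical invariants land precisely on the advertised values and the spin condition survives every gluing. In particular, verifying that the group-carrying factor realizes every admissible $(e_2,\sigma_2)$ at the stated $(4(g+r),0)$ cost — and reconciling that cost with the coordinates $(c_1^2,\chi_h)$ in which Theorem \ref{Theorem 1} is phrased — is where the quantitative heart of the argument lies; once $Z$ is in hand, the passage to dimension six through the product formula is routine.
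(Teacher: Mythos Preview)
Your overall architecture — build a four–manifold $Z$ carrying the group $G$ at Euler-characteristic cost $4(g+r)$ and signature cost $0$, then cross with $\Sigma_h$ for $h=0,1,2$ — matches the paper's in spirit, and the product Chern-number formula you write down is exactly Lemma~\ref{Lemma P}. But the paper does \emph{not} take a single product $Z\times\Sigma_h$. It forms two products $X_1(G)\times\Sigma_h$ and $X_2\times\Sigma_h$ (with $X_2$ simply connected) and then performs a \emph{six-dimensional} symplectic sum along $T^2\times\Sigma_h$ (or $\Sigma_2\times\Sigma_2$ when $h=2$). This is not a cosmetic difference: it is what makes both halves of your proof close.

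First, the ``residual constants'' $(-48,-24,-8)$ in $W_2$ and $Y_2$ are not produced by any auxiliary summand you have yet to identify; they are the correction terms $-6c_1^2(\Sigma_2\times\Sigma_2)$, $-2(c_1^2+c_2)(\Sigma_2\times\Sigma_2)$, $-2c_2(\Sigma_2\times\Sigma_2)$ from the symplectic-sum formula (Lemma~\ref{Lemma S}). A bare product $Z\times\Sigma_2$ will never hit those numbers, and you have not said what operation would.

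Second, and more seriously, your mechanism for killing the surface factor $\pi_1(\Sigma_h)$ does not work as written. Luttinger surgery in the sense of \cite{[Lu]} is a four-dimensional operation on \emph{Lagrangian} $2$-tori; in a symplectic $6$-manifold a Lagrangian submanifold is three-dimensional, so a $2$-torus of the form $(\text{loop in }\Sigma_h)\times(\text{loop in }Z)$ is at best isotropic, and the cited surgery is not available. The paper avoids this problem entirely: in the six-dimensional symplectic sum (Lemma~\ref{Lemma F}) the gluing diffeomorphism is chosen to send the generators of $\pi_1(\Sigma_h)$ on the $X_1(G)$ side to the (trivial) push-offs of the torus generators on the simply connected $X_2$ side, so Seifert--van Kampen returns $\pi_1(X_1(G))\cong G$ directly. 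Without that second simply connected piece to glue against, you have no obvious way to dispose of $\pi_1(\Sigma_h)$.
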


A particular class of manifolds whose interest has sparkled the introduction of new techniques of symplectic constructions, are symplectic manifolds with canonical class $c_1 = 0$ that are known as Calabi-Yau manifolds. The special attention that these manifolds have received in recent years is of relevance for geographical studies. In dimension four, there is a fairly broad knowledge on the existence of these objects \cite[Theorem 1.2]{[Li1]}, \cite[Section 4]{[Li2]}. The earlier known 6-dimensional examples were K\"ahler manifolds constructed from algebraic geometry, like the quintic hypersurface inside the complex projective 4-space $\mathbb{CP}^4$. Symplectic nilmanifolds appeared as the first examples of non-K\"ahler symplectic Calabi-Yau manifolds (see \cite{[T], [CFG]}); these manifolds are quotients of nilpotent Lie groups with left-invariant symplectic forms.\\

In the paper \cite{[STY]}, I. Smith, R. Thomas, and S. T. Yau produced several symplectic 6-manifolds with vanishing canonical class, which are potentially non-K\"ahler. The first example of a simply connected symplectic, yet non-K\"ahler Calabi-Yau 6-manifold was constructed by J. Fine and D. Panov in \cite{[PF2]} (see \cite{[PF1]} as well). Fine and Panov employ hyperbolic geometry to construct their examples. A new surgical procedure called coisotropic Luttinger surgery was introduced by S. Baldridge and P. Kirk to construct such manifolds  in \cite{[BK6]}, where they also constructed several non-simply connected examples. \\


Several manifolds of Theorem \ref{Theorem 2} and Theorem \ref{Theorem 4} (Item (2)) satisfy $c_1 = 0$; this case is studied by A. Akhmedov in \cite{[A6]}. In particular, the first examples of manifolds $Y_1(\{1\}), Y_1(\Z)$, 
in item (2) of Theorem \ref{Theorem 4} with $c_1 = 0$ appeared in \cite[Theorem 1, Theorem 2]{[A6]}. These manifolds have large third Betti number. In \cite{[DFN]}, Fine and Panov constructed Calabi Yau manifolds with arbitrary fundamental group by resolving orbifolds. In particular, they constructed examples of such manifolds with $b_3 = 0$ \cite[Theorem 3]{[DFN]}.\\

Finally, we recover and extend the main result in \cite{[MH]} to the geography of symplectic 6-manifolds for various choices of fundamental groups. The case of arbitrary fundamental group was studied previously by R. E. Gompf in \cite[Section 7]{[Go]}, and by F. Pasquotto in \cite[Proposition 4.31]{[BP]}. The statement is the following.\\

\begin{corollary}{\label{Corollary 3}}Let $X(G)$ be a symplectic 6-manifold. Then, 
\begin{center}
$c_1^3(X(G)) \equiv c_3(X(G)) \equiv 0$ mod 2,\end{center} \begin{center} $c_1c_2(X(G)) \equiv 0 $ mod 24, and $\pi_1(X(G)) \cong G$.
\end{center}
Conversely, any triple $(a, b, c)$ of integers satisfying $a \equiv c \equiv 0$ mod 2 and $b \equiv 0$ mod 24 occurs as a triple $(c_1^3, c_1c_2, c_3)$ of Chern numbers of a symplectic 6-manifold $X(G)$ with $\pi_1(X(G)) \cong G$, where $G$ is a finitely presented group.\\

\end{corollary}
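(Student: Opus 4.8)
The plan is to treat the two implications separately: first the congruences satisfied by the Chern numbers of any closed symplectic (hence almost complex) $6$-manifold, and then the realization of every admissible triple with a prescribed fundamental group.

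For the \emph{necessity} of the congruences, write $X = X(G)$. The identity $c_3(X) = \chi(X)$ together with Poincar\'e duality settles $c_3 \equiv 0 \pmod 2$: the cup pairing $H^3(X;\R)\times H^3(X;\R)\to H^6(X;\R)\cong \R$ is skew-symmetric and nondegenerate, so $b_3$ is even and $\chi(X) = 2 - 2b_1 + 2b_2 - b_3$ is even. The congruence $c_1c_2 \equiv 0 \pmod{24}$ is the integrality of the Todd genus $\mathrm{Td}(X) = \tfrac{1}{24}c_1 c_2$, which is the index of the $\mathrm{Spin}^c$ Dirac operator of the canonical $\mathrm{Spin}^c$ structure of the almost complex manifold and is therefore an integer by the Atiyah--Singer index theorem. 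For $c_1^3 \equiv 0 \pmod 2$ I would reduce mod $2$ and use the Wu formula: one has $c_1 \equiv w_2$ and $v_2 = w_2$, while $w_3 = \mathrm{Sq}^1 w_2 = 0$ for an almost complex manifold; the Cartan formula then gives $\mathrm{Sq}^2(w_2^2) = 2\,w_2^3 = 0$, and since $\langle w_2^3, [X]\rangle = \langle v_2\cup w_2^2, [X]\rangle = \langle \mathrm{Sq}^2(w_2^2), [X]\rangle$, we conclude $c_1^3 \equiv w_2^3 \equiv 0 \pmod 2$.

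For the \emph{realization} I would use the families of Theorem \ref{Theorem 4} as building blocks and enlarge the attainable set by symplectic blow-ups and symplectic sums. Introduce the combination $\kappa := c_1^3 - 3 c_1 c_2 + 3 c_3$; a direct substitution shows that $W_0(G)$ and all $Y_i(G)$ satisfy $\kappa = 0$, whereas $W_2(G)$ satisfies $\kappa = 24$. Inside the hyperplane $\kappa = 0$ the Chern data of $W_0(G)$ equal $(c_1^3, c_1c_2, c_3) = (12u + 18S,\, 6u + 6S,\, 2u)$ with $u = (e_1+e_2) + 4(g+r)$ and $S = \sigma_1 + \sigma_2$; since the constraints force $u + S \equiv 0 \pmod 4$, the pair $(c_1c_2, c_3) = (6(u+S), 2u)$ already lies in $24\Z \times 2\Z$ and, augmented by the products $M(G)\times\Sigma_h$ of varying genus (which also have $\kappa = 0$ and supply negative values of $c_3$), sweeps out every admissible value outside a finite set, with $c_1^3 = 3c_1c_2 - 3c_3$ then determined. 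To leave this hyperplane I would invoke the symplectic blow-up at a point, under which $(c_1^3, c_1c_2, c_3) \mapsto (c_1^3 - 8,\, c_1 c_2,\, c_3 + 2)$, so that $\kappa \mapsto \kappa - 2$ while $c_1c_2$, the symplectic structure, and $\pi_1 \cong G$ are all preserved. Given a target $(a,b,c)$ with $\kappa_0 := a - 3b + 3c \le 0$, choosing a $\kappa = 0$ block with $c_1c_2 = b$ and $c_3 = a - 3b + 4c$ and then blowing up $-\kappa_0/2$ points lands precisely on $(a,b,c)$; for $0 < \kappa_0 \le 24$ one starts instead from $W_2(G)$ and blows up $(24 - \kappa_0)/2$ points.

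The main obstacle is the case of large positive $\kappa_0$ together with the preservation of the fundamental group. To reach $\kappa_0 > 24$ I would raise $\kappa$ by symplectic-summing copies of $W_2(G)$ (or simply connected standard pieces of positive $\kappa$, which exist by \cite{[MH]}) along a symplectic $4$-torus with cancelling normal data; since $\chi(T^4) = 0$ this operation is additive on all three Chern numbers, hence adds $24$ to $\kappa$ per summand, after which the requisite number of point blow-ups descends to the prescribed value. The delicate points are that the gluing loci must be chosen so that a Seifert--van Kampen computation leaves $\pi_1 \cong G$ unchanged (the blow-ups do not affect $\pi_1$), and that the two free product-parameters are pre-compensated so that, after the prescribed number of torus sums and blow-ups, all three coordinates land on target simultaneously; the finitely many residual small cases are handled by hand. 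A short counting argument then confirms that the whole lattice $\{\,a \equiv c \equiv 0\ (2),\ b \equiv 0\ (24)\,\}$ is covered, which together with the necessity direction proves the corollary.
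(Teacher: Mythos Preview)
Your necessity argument is fine and more explicit than the paper's, which simply cites \cite[Proposition 9]{[OV]}.

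For the realization direction you take a genuinely different route from the paper and it has real gaps. The paper's proof rests on Halic's Lemma~\ref{Lemma M}: once a single manifold with the desired $c_1c_2$ and $\pi_1\cong G$ is produced containing both a projective line $E$ with $\langle c_1(N(E)),[E]\rangle\le -1$ and a square-zero genus-two surface, Lemma~\ref{Lemma M} delivers \emph{all} even pairs $(c_1^3,c_3)$ at that value of $c_1c_2$ by blowing up points, copies of $E$, and copies of $\Sigma_2$. In particular, blowing up along $E$ \emph{raises} your invariant $\kappa=c_1^3-3c_1c_2+3c_3$ by $2\alpha\ge 2$ (check Lemma~\ref{Lemma B}), so no symplectic-sum trick is needed to reach large positive $\kappa$. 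The paper then only has to show that each multiple of $24$ occurs as $c_1c_2$ among the blocks of Theorem~\ref{Theorem 19}/\ref{Theorem 22}.

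Your scheme uses only point blow-ups (which can only decrease $\kappa$) and proposes torus sums to move upward. Two concrete problems arise. First, your coverage of the $\kappa=0$ hyperplane with the correct fundamental group is not established: for a target $(a,b,c)$ with $\kappa_0\le 0$ you need a $\kappa=0$ block with $c_1c_2=b$ and $c_3=a-3b+4c$, and this last number is an \emph{arbitrary} even integer. The manifolds $W_0(G)$ give only $c_3=2(e_1+e_2)+8(g+r)$, which is bounded below, and your suggested remedy $M(G)\times\Sigma_h$ for $h\ge 1$ has $\pi_1\cong G\times\pi_1(\Sigma_h)\not\cong G$, so it does not supply the negative-$c_3$ range with the prescribed fundamental group. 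The phrases ``outside a finite set'' and ``handled by hand'' hide exactly the cases that are not covered. Second, the $T^4$-sum step is only asserted: you would need each piece to contain a symplectic $T^4$ with trivial normal bundle disjoint from the data already used, together with a gluing that a Seifert--van~Kampen argument shows preserves $\pi_1\cong G$; none of this is checked, and the building blocks of Theorem~\ref{Theorem 4} are not shown to contain such a $T^4$.

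The cleanest fix is to adopt the paper's mechanism: after one point blow-up you have an exceptional line $E$ with $\alpha=1$, the genus-two surface $F$ is supplied by Proposition~\ref{Proposition 11}, and Lemma~\ref{Lemma M} then replaces both your $\kappa=0$ coverage problem and your torus-sum step in one stroke.
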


Unlike the manifolds of Theorem \ref{Theorem 2} and Theorem \ref{Theorem 4}, the construction of the manifolds of  Corollary \ref{Corollary 3} involve blow ups along points and along surfaces. The fundamental tools of construction used to obtain all these results are R. E. Gompf's symplectic sum \cite{[Go]} and Luttinger surgery \cite{[Lu], [ADK]}.\\ 

The paper is organized as follows. The second section contains a description of our 4-dimensional building blocks. In Section \ref{Section 2.1}, we present the building blocks that allows us to produce manifolds for several choices of fundamental groups. The building block used to construct manifolds with arbitrary fundamental group is described in Section \ref{Section 2.2}. A useful explanations on the fundamental group computations of Theorem \ref{Theorem 1} is given in Section \ref{Section 2.3}. The fundamental group computations are located in Section \ref{Section 5}. Section \ref{Section 2.4} and Section \ref{Section 2.5}  contain the building blocks we use to populate geographic regions. The geography of symplectic 4-manifolds with prescribed fundamental group is studied in Section \ref{Section 3}. The proof of Theorem \ref{Theorem 1} is given in Section \ref{Section 3.1} and Section \ref{Section 3.2}. We then use these results to study 6-manifolds in Section \ref{Section 4}. The formulae needed to compute Chern numbers are given in Section \ref{Section 4.1}. In Section \ref{Section 4.2}, we build symplectic sums that are fundamental for our purposes. The tool that allow us to proof Corollary \ref{Corollary 3} is presented in Section \ref{Section 4.3}, where we work out an example to demonstrate the utility of these techniques for several fundamental groups. Section \ref{Section 5.1} provides a detailed description on our choices of gluing maps used in the 6-dimensional symplectic sums. The fundamental group computations involved in the statements of several of our results, including Theorem \ref{Theorem 2}, and Theorem \ref{Theorem 4}, are presented in Section \ref{Section 5.2}. Finally, Section \ref{Section 5.3} contains the proofs of Theorem \ref{Theorem 4} and Corollary \ref{Corollary 3}.

\section{Building blocks}{\label{Section 2}}

\subsection{Symplectic 4-manifolds used to obtain several of our choices for $\pi_1$.}{\label{Section 2.1}} The following building block in our symplectic sums will allow us to vary our choices of fundamental groups without contributing anything to the characteristic numbers. Take the product of a torus and a genus two surface $T^2\times \Sigma_2$. The characteristic numbers are given by $c_2(T^2\times \Sigma_2) = 0 = \sigma(T^2\times \Sigma_2)$. Endow $T^2\times \Sigma_2$ with the symplectic product form. The torus $T^2\times \{x\}$ and the genus two surface $\{x\}\times \Sigma_2$ are symplectic submanifolds of $T^2\times \Sigma_2$, which are geometrically dual to each other. Let $F$ denote a parallel copy of the surface of genus two. Furthermore, $T^2\times \Sigma_2$ contains 4 pairs of geometrically dual Lagrangian tori available to perform Luttinger surgery \cite{[Lu], [ADK]} on them; see also \cite[Section 2]{[HL]}. These tori are of the form $S^1\times S^1 \subset T^2\times \Sigma_2$. \\

Regarding the very relevant fundamental group computations, we have the following result. Let $\{x, y\}$ be the generators of $\pi_1(T^2)$, and let $\{a_1, b_1, a_2, b_2\}$ be the standard set of generators of $\pi_1(\Sigma_2)$.\\

\begin{proposition}{\label{Proposition 5}} (Baldridge - Kirk  \cite[Proposition 7]{[BK]}). The fundamental group
\begin{center}$\pi_1(T^2\times \Sigma_2 - (T_1 \cup T_2 \cup T_3 \cup T_4 \cup F))$\end{center}
is generated by the loops  $x, y, a_1, b_1, a_2, b_2$. Moreover,  with respect to certain paths to the boundary of the tubular neighborhoods of the $T_i$ and $F$, the meridians and two Lagrangian push offs of the surfaces are given by
\begin{itemize}
\item $T_1: m_1 = x, l_1 = a_1$, $\mu_1= [b^{-1}, y^{-1}]$,
\item $T_2:  m_2 = y, l_2 = b_1a_1b^{-1},$ $\mu_2 = [x^{-1}, b_1]$,
\item $T_3:  m_3 = x, l_3 = a_2$, $\mu_3 = [b_2^{-1}, y^{-1}]$,
\item $T_4: m_4 = y, l_4 = b_2a_2b_2^{-1}$, $\mu_4 = [x^{-1}, b_2]$,
\item $\mu_{F} = [x, y]$.
\end{itemize}
The loops $a_1, b_1, a_2, b_2$ lie on the genus 2 surface and form a standard set of generators; the relation $[a_1, b_1][a_2, b_2] = 1$ holds.
\end{proposition}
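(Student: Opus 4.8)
The plan is to compute this fundamental group by a van Kampen argument that begins with the product $T^2 \times \Sigma_2$ and excises the submanifolds one at a time, keeping careful track of a basepoint and of the paths joining it to the boundaries of the tubular neighborhoods. The starting point is the standard presentation of the fundamental group of the product,
\[
\pi_1(T^2 \times \Sigma_2) \cong (\Z \oplus \Z) \times \pi_1(\Sigma_2),
\]
generated by $x, y, a_1, b_1, a_2, b_2$ subject to $[x,y] = 1$, the surface relation $[a_1,b_1][a_2,b_2] = 1$, and the commutation of each generator of the $T^2$ factor with each generator of the $\Sigma_2$ factor. The entire content of the proposition is to determine which of these relations survive in the complement and to express every meridian and Lagrangian push-off as a word in these six generators.

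I would dispose of the surface $F$ first, as it is the cleanest case. Since $F$ is a parallel copy of $\{pt\}\times \Sigma_2$, removing an open tubular neighborhood produces $(T^2 \setminus D^2)\times \Sigma_2$. Now $\pi_1(T^2 \setminus D^2)$ is the free group on $x$ and $y$, the relation $[x,y]=1$ is destroyed, and the boundary circle of the removed disk represents the commutator $[x,y]$; this boundary circle is precisely the meridian, giving $\mu_F = [x,y]$. The other generators and the cross-commutation relations are untouched by this excision.

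Next I would remove the four Lagrangian tori from $(T^2 \setminus D^2)\times \Sigma_2$. Each $T_i$ is a product $\gamma_i \times \delta_i$ of an embedded circle $\gamma_i \subset T^2$ with an embedded circle $\delta_i \subset \Sigma_2$, placed disjointly and, by the observation with the product form in Section \ref{Section 2.1}, automatically Lagrangian. Van Kampen applied along $\partial(\nu T_i) = T^3$ reduces the problem to identifying, inside the complement, the images of a normal circle and of the two generating circles of $T_i$. The explicit placement then shows that a small normal circle can be isotoped in the complement onto a coordinate loop, yielding the meridians $m_i \in \{x, y\}$, and that the two generating circles, pushed off along the Lagrangian framing, lie on the surface and torus factors already present; hence no new generator appears and $x, y, a_1, b_1, a_2, b_2$ continue to generate. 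The ``straight'' push-offs $l_i$ then follow directly from the classes of $\gamma_i$ and $\delta_i$, with the conjugation in $l_2 = b_1 a_1 b_1^{-1}$ coming from the connecting path that crosses a handle of $\Sigma_2$.

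The main obstacle is the precise bookkeeping that produces the commutator expressions for the second push-offs, namely the classes of the form $[b_j^{-1}, y^{-1}]$ and $[x^{-1}, b_j]$. These are not homotopically obvious: the Lagrangian push-off of a generating circle differs from the naive product push-off by a detour that either encircles the point removed for $F$ or traverses a corner of the fundamental polygon of $\Sigma_2$, and it is exactly this detour that contributes the commutator of a $T^2$-generator with a $\Sigma_2$-generator. Pinning it down requires fixing the explicit position of each $T_i$ relative to the curves $a_i, b_i$ and to $F$, and choosing the connecting paths coherently; once this geometric normalization is in place, each $\mu_i$ is obtained by chasing the push-off around its chosen path. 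I expect this coherent choice of framings and connecting paths, rather than any single homotopy, to be the crux of the argument.
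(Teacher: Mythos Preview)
The paper does not supply its own proof of this proposition: it is quoted verbatim from Baldridge--Kirk \cite[Proposition~7]{[BK]} and the reader is referred there for the argument. So there is no in-paper proof to compare against; one can only evaluate your sketch on its own terms and against the original source.

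Your overall architecture (start with $\pi_1(T^2\times\Sigma_2)$, excise $F$ first to get $(T^2\setminus D^2)\times\Sigma_2$ and read off $\mu_F=[x,y]$, then remove the four product tori and apply van Kampen along each $T^3$ boundary) is exactly the shape of the Baldridge--Kirk computation. However, there is a genuine error in how you assign the three boundary classes. You write that ``a small normal circle can be isotoped in the complement onto a coordinate loop, yielding the meridians $m_i\in\{x,y\}$,'' and later call the commutators $[b_j^{-1},y^{-1}]$, $[x^{-1},b_j]$ the ``second push-offs.'' This has the roles reversed. In the proposition, $m_i$ and $l_i$ are the \emph{two Lagrangian push-offs} of the generating circles of $T_i$, while $\mu_i$ is the \emph{meridian}, i.e.\ the boundary of a small normal disk. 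Geometrically the normal circle to a torus such as $x\times a_1$ lives in the transverse $(y,b_1)$-directions; it bounds in $T^2\times\Sigma_2$ and hence cannot be isotopic to a primitive coordinate loop like $x$ or $y$. In the complement it becomes precisely the commutator $\mu_1=[b_1^{-1},y^{-1}]$. Conversely, the classes $x,y,a_i$ arise as push-offs of the generating circles of the $T_i$, not as normal circles. So the ``main obstacle'' you identify---pinning down the commutator expressions---is the computation of the meridians $\mu_i$, not of a second family of push-offs; and your claimed isotopy of the normal circle onto a coordinate loop is false. Once this swap is corrected, the remaining bookkeeping (choice of basepoint, connecting arcs, and the conjugation in $l_2$ and $l_4$) proceeds as in \cite{[BK]}.
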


The reader is referred to the lovely paper \cite{[BK]} for details on constructions of symplectic 4-manifolds using Luttinger surgery. We finish this section by pointing out that the description above yields the building blocks needed to produce the manifolds of Theorem \ref{Theorem 1}, Theorem \ref{Theorem 2}, and Corollary \ref{Corollary 3} with surface fundamental groups $\pi_1(\Sigma_g)$. Take the product symplectic manifold $T^2\times \Sigma_g$ of the 2-torus with a surface of genus $g\geq 3$. Endowed with the product symplectic form, the submanifold $T^2\times \{x\} \subset T^2\times \Sigma_g$ is symplectic. Given that $\{x\} \times \Sigma_g$ is geometrically dual to it, it is straight-forward to see that the meridian $\mu_{T^2}$ in $T^2\times \Sigma_g - T^2\times \{x\}$ can be expressed in terms of the generators and relations of $\pi_1(\{x\}\times \Sigma_g)$ see \cite{[BK0], [BK]} for details.

\subsection{4-dimensional building block with prescribed fundamental group.}{\label{Section 2.2}}

In \cite[Theorem 1.1]{[Y]}, the second author proved the following result.

\begin{theorem}{\label{Theorem Y}}  Let $G$ be a group with a presentation with $g$ generators and $r$ relations. There exists a minimal symplectic 4-manifold $X$ with fundamental group $\pi_1(X(G)) \cong G$, and characteristic numbers  \begin{center}$c_1^2(X(G)) = 14 + 8(g + r), c_2(X(G)) =  10 + 4(g + r)$, and $\sigma(X(G)) = -2$, $\chi_h(X(G)) = 2 + (g + r)$.\end{center}

The manifold $X(G)$ contains  a homologically essential Lagrangian torus $T$, and a symplectic surface of genus two $F$ with trivial normal bundle, and \begin{center}$\pi_1(X(G) - T) \cong \pi_1(X(G) - F) \cong \pi_1(X(G)) \cong G$ .\\\end{center}
\end{theorem}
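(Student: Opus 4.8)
The plan is to construct $X(G)$ as a sequence of symplectic sums followed by Luttinger surgeries, following the strategy of the second author's earlier work and the Baldridge--Kirk framework from Proposition~\ref{Proposition 5}. I would start from a simply connected minimal symplectic 4-manifold realizing the target characteristic numbers $(c_1^2, \chi_h) = (14, 2)$ (equivalently the homology of $S^2 \times S^2$ with a genus-two symplectic surface, such as the Fintushel--Park--Stern manifold referenced in Section~\ref{Section 2.2}), which supplies the base piece with $\sigma = -2$. The free contribution of $8(g+r)$ to $c_1^2$ and $4(g+r)$ to $c_2$ must then come from gluing in building blocks that encode the generators and relations of $G$ without altering the signature; each generator and each relation should cost exactly one unit of $\chi_h$, matching $\chi_h = 2 + (g+r)$.

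The key steps, in order, are as follows. First I would fix a presentation $G = \langle x_1, \dots, x_g \mid \rho_1, \dots, \rho_r\rangle$ and prepare a building block whose fundamental group, after surgery, is a free group whose generators can be identified with the $x_i$; the natural candidate is a product $T^2 \times \Sigma_h$ together with the Lagrangian tori of Proposition~\ref{Proposition 5}, whose meridian and Lagrangian push-off data are explicitly recorded there. Second, I would perform Luttinger surgeries on these Lagrangian tori to kill the unwanted generators and to impose exactly the relations $\rho_j$, using the commutator expressions for the $\mu_i$ in Proposition~\ref{Proposition 5} to realize each relation as a surgery-induced identification; this is the heart of encoding an arbitrary presentation. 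Third, I would take the symplectic sum of this block with the simply connected base piece along the genus-two symplectic surfaces $F$, invoking Gompf's theorem to ensure the result is symplectic and computing the resulting characteristic numbers additively (so that $c_1^2$, $c_2$, and $\sigma$ add up as claimed, with the gluing correction terms producing the stated constants $14$, $10$, and $-2$). Finally, I would apply the Seifert--Van Kampen theorem across the sum to verify $\pi_1 \cong G$, and check minimality via the absence of $-1$-spheres, typically by appealing to the usual criterion that symplectic sums along surfaces of positive genus are minimal (Usher's theorem).

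The surviving submanifolds require a separate verification: I would locate the Lagrangian torus $T$ as one of the Lagrangian tori left untouched by the surgeries, and the symplectic genus-two surface $F$ as (a parallel copy of) the surface used in the symplectic sum, then argue that removing either one does not change the fundamental group. The statement $\pi_1(X(G) - T) \cong \pi_1(X(G) - F) \cong \pi_1(X(G))$ follows because the meridian of each is nullhomotopic in the complement: for $F$ this is standard once $F$ is a fiber-like symplectic surface whose meridian $\mu_F = [x,y]$ dies after surgery, and for $T$ the meridian is expressible in the generators and hence already present in $\pi_1$, so deleting $T$ adds no new generator and imposes no new relation.

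The hard part will be the simultaneous bookkeeping of the Luttinger surgeries: one must choose the Lagrangian tori and their framings so that the surgeries jointly realize precisely the presentation $\langle x_i \mid \rho_j\rangle$ while keeping the count of generators and relations tight enough to achieve the minimal Euler-characteristic contribution $4(g+r)$, rather than a wasteful over-count. Because each Luttinger surgery is determined by a curve on the torus together with a framing, and the induced relation in $\pi_1$ depends on the precise push-off and path-to-boundary data of Proposition~\ref{Proposition 5}, the delicate point is verifying that the Van Kampen computation across all the surgeries and the symplectic sum collapses exactly to $G$ with no residual relations; I expect this fundamental-group calculation, rather than the additive Chern-number arithmetic or the minimality argument, to be the main obstacle.
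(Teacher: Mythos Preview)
Your proposal has a genuine gap at the step where you claim to encode an arbitrary presentation via Luttinger surgery on the Lagrangian tori of Proposition~\ref{Proposition 5}. Those four tori in $T^2\times\Sigma_2$ have meridians which are specific commutators such as $[b_1^{-1},y^{-1}]$ or $[x^{-1},b_1]$; a Luttinger surgery $(T_i,\ell,k)$ only turns such a meridian into $\ell^k$ for a simple curve $\ell$ on the torus. This machinery lets you kill or abelianize generators and produce cyclic, free-abelian, or surface groups, but it cannot manufacture an arbitrary relation word $\rho_j$ in a free group on $g$ letters. That is precisely why the paper does \emph{not} use Proposition~\ref{Proposition 5} for this theorem.

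The construction actually used (described in Section~\ref{Section 2.2}) is the Baldridge--Kirk manifold $N=Y\times S^1$, where $Y$ is a $\Sigma_{gn}$-bundle over $S^1$ with a carefully chosen rotation monodromy depending on the lengths of the relation words. The relations are encoded by immersed curves $\gamma_1,\ldots,\gamma_{g+r}$ on the fiber, giving symplectic tori $T_i=\gamma_i\times S^1\subset N$. One then takes $g+r$ copies of the Fintushel--Park--Stern manifold $V$ (with $c_2=4$, $\sigma=0$, and $H_1\cong\Z$, not simply connected and not the homology $S^2\times S^2$ you describe) and symplectic-sums them to $N$ along the $T_i$ and $T_{V_i}$; this kills the $\gamma_i$ and $s$ and contributes exactly $4(g+r)$ to the Euler characteristic. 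The resulting block $BK$ is then summed along the remaining torus $T_0$ with a minimal symplectic manifold homeomorphic to $3\mathbb{CP}^2\#5\overline{\mathbb{CP}^2}$ (this is the piece with $e=10$, $\sigma=-2$, supplying $(c_1^2,\chi_h)=(14,2)$). All the sums are along tori, not along genus-two surfaces, and no Luttinger surgeries are performed to impose the relations of $G$; the relations are carried by the curves $\gamma_i$ from the outset. You have also conflated the roles of the two building blocks: the FPS manifold $V$ is the repeated block, not the base.
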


Let us describe the materials used to prove this result. The fundamental building block for these constructions is a symplectic manifold built in \cite{[BK1]}, which allows the manipulation on the number of generators and relations on fundamental groups. We proceed to describe it.\\

S. Baldridge and P. Kirk take a 3-manifold $Y$ that fibers over the circle, and build the 4-manifold $N: = Y\times S^1$. It admits a symplectic structure (p. 856 in \cite{[BK1]}). Its Euler characteristic and its signature are both zero.\\

The fundamental group $\pi_1(Y\times S^1)$ has classes $s, t, \gamma_1, \ldots, \gamma_{g + r}$ so that
\begin{center}
$G \cong \pi_1(Y\times S^1) / N(s, t, \gamma_1, \ldots, \gamma_{g + r})$,
\end{center}

where $N(s, t, \gamma_1, \ldots, \gamma_{g + r})$ is the normal subgroup generated by the aforementioned classes.\\

A result of W. Thurston \cite{[T]} implies that the manifold $N$ is symplectic. There are $g + r + 1$ symplectically imbedded tori $T_0, T_1, \ldots, T_{g + r} \subset N$, which have the following crucial traits for our purposes concerning fundamental group computations.
\begin{itemize}

\item The generators of $\pi_1(T_0)$ represent $s$ and $t$, and
\item the generators of $\pi_1(T_i)$ represent $s$ and $\gamma_i$ for $i \geq 1$.
\end{itemize}

The curve $s$ has the form $\{y\} \times S^1 \subset Y\times S^1$, with $y\in Y$, and $\gamma_i$ has the form $\gamma_i \times \{x\}\subset Y\times \{x\}, x\in S^1$.  We point out that the role of the curves $\gamma_i$ is to provide the $r$ relations in a presentation of $G$ \cite[Section 4]{[BK1]}.\\

The details of the construction depend on the presentation
of our group; we give a description of the construction with the purpose of making the paper a bit more self-contained. Let $G$ be a finitely presented group:
$G=\left\langle x_{1},\ldots,x_{g}|w_{1},\ldots,w_{r}\right\rangle$
with $g$ generators $x_{1},\ldots,x_{g}$ and $r$ relations $w_{1},\ldots
,w_{r}.$ Let $n_{i}$ denote the length of the relation $w_{i},$ and set
\[n=1+{\displaystyle\sum_{i=1}^{r}}n_{i}\]

Let $\Sigma_{gn}$ denote a surface of genus $gn.$ We will define an
automorphism $R$ on $\Sigma_{gn},$ and so we consider the following
description of $\Sigma_{gn}$ to aid with specifying $R.$ Consider a round $2
$-sphere $S^{2}\subseteq\mathbb{R}^{3}$ centered at the origin, and let
$\bar{R}$ denote a rotation of $S^{2}$ by an angle of $\frac{2\pi}{ng}$
through the axis passing through the points $\left(  0,0,\pm1\right)  .$ The
orbit of $\left(  1,0,0\right)  $ under $\left\langle \bar{R}\right\rangle
\subseteq SO\left(  3\right)  $ gives $ng$ distinct points on $S^{2},$ and
performing a connected sum of a torus with $S^{2}$ at each point in
$\left\langle \bar{R}\right\rangle \left\{  \left(  1,0,0\right)  \right\}  $
in an equivariant way gives a surface $\Sigma_{gn}$ so that the action of
$\bar{R}$ on $S^{2}$ naturally gives an action $R$ of $\mathbb{Z}/\left(
gn\right) $ on $\Sigma_{gn}.$

Let $Y$ be a surface bundle over $S^{1}$ given by the monodromy $R^{g}.$ That
is, let
\[
Y=\frac{F\times I}{\left(  x,0\right)  \sim\left(  R^{g}\left(  x\right)
,1\right)  }.
\]
Let $p=\left(  0,0,1\right)  ,$ and let $\tau$ denote the loop in $Y$ that is
the image in $Y$ of $\left\{  p\right\}  \times I$ under $\sim.$ This is a
loop since $R^{g}\left(  p\right)  =p.$ Let $s,t\in\pi_{1}\left(  Y\times
S^{1}\right)  $ be given as follows: $t$ is represented by a parametrization
of $\tau\times\left\{  1\right\}  ,$ and $s$ is a generator of $\left\{
1\right\}  \times\mathbb{Z}\subseteq\pi_{1}\left(  Y\right)  \times\pi
_{1}\left(  S^{1}\right)  .$

We now use the presentation of $G$ given previously to get
a new presentation of $G.$ We add $g$ generators $y_{1},\ldots,y_{g},$ we
replace the relations $w_{1},\ldots,w_{r}$ with new relations $\tilde{w}%
_{1},\ldots,\tilde{w}_{r},$ and we add $g$ more relations $x_{1}y_{1}%
,\ldots,x_{g}y_{g}.$ Hence we have a new presentation of the same group $G:$
\begin{equation}
G=\left\langle x_{1},\ldots,x_{g},y_{1},\ldots,y_{g}|\tilde{w}_{1}%
,\ldots,\tilde{w}_{r},x_{1}y_{1},\ldots,x_{g}y_{g}\right\rangle
\label{Num-NewPresentationofG}%
\end{equation}
The relation $\tilde{w}_{i}$ is obtained from the relation $w_{i}$ by
replacing each occurence of $x_{k}^{-1}$ with $y_{k}.$ Hence each relation is
expressed as a product of positive powers of the generators.

Each relation in the presentation of $G$ given in
(\ref{Num-NewPresentationofG}) gives an immersed curve $\gamma_{i}$ on the
surface $\Sigma_{gn}.$ Identify $\Sigma_{gn}$ with a fiber of $Y,$ and then
take the product of these curve with a circle to obtain tori $T_{i}%
:=\gamma_{i}\times S^{1}\subseteq Y\times S^{1}.$ Figure 1 below is taken from
\cite{[BK1]}, and here we see examples of how occurences of $x_{i}$ and $y_{i}$
give us curves on this surface.

\begin{figure}{\label{Figure 1}}
\begin{center}
\includegraphics[viewport= 170 5 150 310]{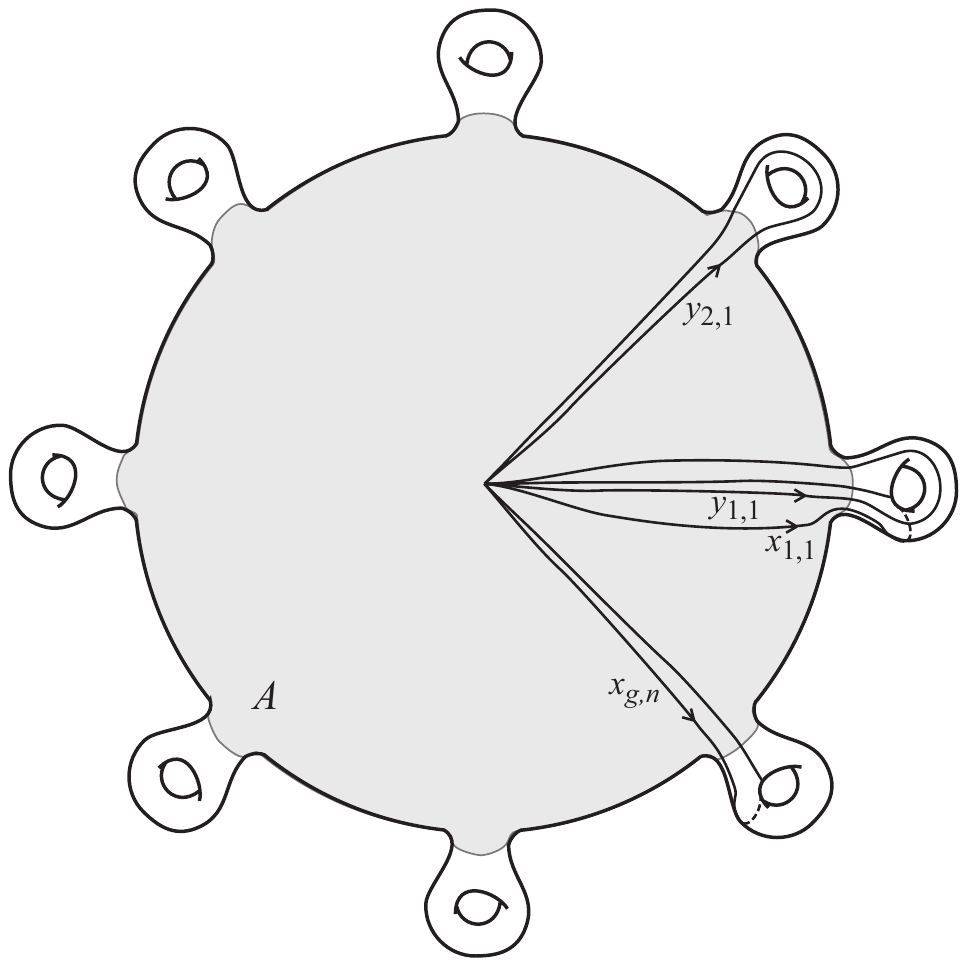}
\caption{}
\end{center}
\end{figure}


In order to make the construction work, the tori $T_{i}$ need to be disjoint
but immersed, and the curves $\gamma_{i}$ need to be mapped to different parts
of the surface. The details are provided in \cite{[BK1]}; here we only
briefly mention that by expressing the $r + g$ relations as only positive powers of
the $2g$ generators, one ensures that we can perturb the tori $T_{i}$ so that they
are symplectic.

A final torus that we consider is $T_{0},$ which is the product of the circle
corresponding to $s,t\in\pi_{1}\left(  Y\times S^{1}\right)  .$

The presentation for $\pi_{1}\left(  Y\times S^{1}\right)  $ is
\begin{equation}
\left\langle \pi_{1}\Sigma_{gn},t|R_{\ast}^{g}\left(  x\right)  =txt^{-1}%
\text{ for }x\in\pi_{1}\Sigma_{gn}\right\rangle \times\left\langle
s\right\rangle \label{Num-PresentationNtimesS1}%
\end{equation}
where $\pi_{1}\Sigma_{gn}$ has the presentation
\begin{equation}
\pi_{1}\Sigma_{gn}=\left\langle
\begin{array}
[c]{c}%
x_{1,1},y_{1,1},\ldots,x_{g,1},y_{g,1},x_{1,2},y_{1,2},\ldots,x_{g,2}%
,y_{g,2},\ldots,\\
x_{1,n},y_{1,n},\ldots,x_{g,n},y_{g,n}%
\end{array}
\left\vert
{\displaystyle\prod\limits_{l=1}^{n}}
{\displaystyle\prod\limits_{k=1}^{g}}
\left[  x_{k,l},y_{k,l}\right]  \right.  \right\rangle
\label{Num-PresenationPi1F}%
\end{equation}

We symplectic sum along the tori $T_{0},\ldots,T_{r+g}$ with other symplectic
manifolds to kill the pushoffs of the curves that generate $\pi_{1}\left(
T_{i}\right)  $ while not introducing additional generators to the fundamental
group of our manifold. For example, if we symplectic sum in such a manner
along $T_{0},$ then will kill the generators $s$ and $t,$ and the presentation
of (\ref{Num-PresentationNtimesS1}) reduces to
\[
\left\langle \pi_{1}\Sigma_{gn}|R_{\ast}^{g}\left(  x\right)  =x\text{ for
}x\in\pi_{1}\Sigma_{gn}\right\rangle
\]
Notice that in the notation of (\ref{Num-PresenationPi1F}), $R_{\ast}%
^{g}\left(  x_{k,l}\right)  =x_{k,l+1}$ and $R_{\ast}^{g}\left(
y_{k.l}\right)  =y_{k.l+1},$ where the addition on the second subscript is
taken modulo $n$ (with $n+1=1$), and so setting $x_{k}=x_{k,l}$ and
$y_{k}=y_{k,l}$ the presentation of (\ref{Num-PresentationNtimesS1}) actually
reduces further to,
\begin{equation}
\left\langle x_{1},y_{1},\ldots,x_{g},y_{g}\left\vert
{\displaystyle\prod\limits_{l=1}^{n}}
{\displaystyle\prod\limits_{k=1}^{g}}
\left[  x_{k},y_{k}\right]  \right.  \right\rangle
\label{Num-PresentationNtimesS1_2}%
\end{equation}
Summing along the tori $T_{r+1},\ldots,T_{r+g}$ introduced the relations
$x_{k}y_{k}=1,$ and so the commutators $\left[  x_{k},y_{k}\right]  $ are
trivial, so that the surface relation
\[
{\displaystyle\prod\limits_{l=1}^{n}}
{\displaystyle\prod\limits_{k=1}^{g}}
\left[  x_{k},y_{k}\right]
\]
of (\ref{Num-PresentationNtimesS1_2}) is trivial. Finally, performing a
symplectic sum along the tori $T_{1},\ldots,T_{r}$ introduces the relations
$\tilde{w}_{1},\ldots,\tilde{w}_{r},$ and the presentation of
(\ref{Num-PresentationNtimesS1_2}) is now
\[
\left\langle x_{1},y_{1},\ldots,x_{g},y_{g}|x_{1}y_{1},\ldots,x_{g}%
y_{g},\tilde{w}_{1},\ldots,\tilde{w}_{r}\right\rangle
\]
which is just the presentation of $G$ in (\ref{Num-NewPresentationofG}). Hence
we have shown that fiber summing $Y\times S^{1}$ with the appropriate
symplectic manifolds along the tori $T_{0},\ldots,T_{r+g}$ is a symplectic
manifold with fundamental group $G.$\\

For a proof of Theorem \ref{Theorem Y}], the second author used a spin symplectic 4-manifold $V$ with $c_2(V) = 4, \sigma = 0$ that shares the cohomology ring of $2(S^2\times S^2) \# S^3\times S^1$ and contains a symplectic torus $T_V$ of self intersection zero carrying a generator of $H_1(V; \Z) \cong \Z$. This manifold was constructed in \cite[Section 4]{[FPS]}; its construction consists of  applying seven Luttinger surgeries \cite{[Lu], [ADK]} to the product of two copies of a genus two surface $\Sigma_2 \times \Sigma_2$. There exists a symplectic surface of genus two and self intersection zero $F\hookrightarrow V$. \\

A careful analysis of fundamental groups computations under cut and paste constructions was done by the second author in \cite[Section 3, Section 4]{[Y]}. The main ingredient that yields an improvement of \cite[Theorem]{[BK]} is the symplectic sum \cite[Theorem 1.3]{[Go]} of $N$ and $g + r$ copies of $V$ along they symplectic tori $T_i$ and $T_{V_i}$ for $ 1\leq i \leq g + r$; see \cite{[Y]} for details. \\

Denote this building block by $BK$. Its characteristic numbers are computed using the well known formulae in \cite[p. 535]{[Go]} to be $c_2(BK) = 4(g + r), \sigma(BK) = 0$.\\

\subsection{Computations of $\pi_1$ Theorem \ref{Theorem 1}.}{\label{Section 2.3}} In order for the manifold $BK$ to be used successfully in the production of a manifold whose fundamental group has a presentation consisting of $g$ generators and $r$ relations, one builds a symplectic sum with a manifold $X$ that results in the resulting manifold having a fundamental group where the classes $s = 1 = t$ (where $s$ and $t$ are generators of $\pi_1(BK)$ as in Section \ref{Section 2.2}.)   This was initially observed by R.E. Gompf \cite{[Go]}, S. Baldridge and P. Kirk in \cite{[BK1]}, and used by the second author in the proof of Theorem \ref{Theorem Y}.\\

Suppose there is a minimal symplectic 4-manifold $X$, which contains a symplectic torus $T \hookrightarrow X$ of self intersection zero such that $\pi_1(X - T) \cong \{1\}$. \cite[Section 4]{[Y]}. Build the symplectic sum along tori
\begin{center}

$Z:= BK_{T_0 = T} X$.\\
\end{center}

In \cite[Section 3, Section 4]{[Y]} it is proven that there exists an isomorphism \begin{center} $\pi_1(Z) \cong \left<g_1, \ldots, g_g | r_1, \ldots, r_r\right> \cong G$.\end{center}

The 4-manifold $X$ used in the proof of Theorem \ref{Theorem Y} is a minimal symplectic manifold homeomorphic to $3\mathbb{CP}^2 \# 5 \overline{\mathbb{CP}^2}$ constructed in \cite[Theorem 18]{[BK]}. To prove Theorem \ref{Theorem 1}, we will vary the choice of $X$ used in the construction described previously, where are choices come from from Proposition \ref{Proposition 7} and Proposition \ref{Proposition 11} in Section \ref{Section 2.4}. The fundamental group computations  follow verbatim from the work of the second author in \cite{[Y]}.

\subsection{4-dimensional building blocks used to populate geographic regions I}{\label{Section 2.4}} We proceed to describe the building blocks that have been successfully used by  topologists to understand the geography and botany problems of symplectic 4-manifolds (see for example \cite{[Go], [ABBKP], [BK], [AP], [T0]}, and the references there).\\

\begin{proposition}{\label{Proposition 7}} Let $b^- \in \{4, 5, \ldots, 17, 18, 19\}$. There exists a minimal symplectic simply connected 4-manifold $X_{3, b^-}$ with second Betti number given by $b_2^+(X_{3, b^-}) = 3$ and $b_2^-(X_{3, b^-}) = b^-$, which contains two homologically essential Lagrangian tori $T_1$ and $T_2$ such that \begin{center} $\pi_1(X_{3, b^-} - (T_1 \cup T_2)) = \{1\}$.\end{center}

The characteristic numbers are $c_2(X_{3, b^-}) = 5 + b_2^-, \sigma = 3 - b^-$, and $c_1^2(X_{3, b^-}) = 10 + 2b^- + 9 - 3 b_2^- = 19 - b^-$.\\
\end{proposition}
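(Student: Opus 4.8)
The plan is to build the family $X_{3,b^-}$ for $b^- \in \{4,\ldots,19\}$ as a single base example together with a controlled sequence of blow-downs (or equivalently, to start from a known simply connected symplectic manifold with $b_2^+ = 3$ and $b_2^- = 19$ and move down the list by reverse-engineering the signature). The key numerical constraints are $b_2^+ = 3$ fixed throughout, so $\chi = 2 + b_2^+ + b_2^- = 5 + b^-$ and $\sigma = b_2^+ - b_2^- = 3 - b^-$, whence $c_1^2 = 2\chi + 3\sigma = 2(5+b^-) + 3(3-b^-) = 19 - b^-$ and $c_2 = \chi = 5 + b^-$; these are exactly the claimed characteristic numbers, so the arithmetic is forced once the topology is arranged, and the real content is the \emph{existence} of such manifolds that are minimal, simply connected, and carry the two Lagrangian tori with simply connected complement.

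First I would recall the standard menagerie of minimal simply connected symplectic $4$-manifolds with $b_2^+ = 3$: the prototype is a manifold homeomorphic to $3\mathbb{CP}^2 \# k\overline{\mathbb{CP}^2}$ for the appropriate $k = b^-$, constructed by iterated Luttinger surgery and symplectic sums (as in \cite{[BK]}, \cite{[ABBKP]}, \cite{[AP]}). The construction typically begins with a product such as $\Sigma_2 \times \Sigma_2$ or a fiber sum of elliptic surfaces, performs Luttinger surgeries along Lagrangian tori to kill $\pi_1$, and thereby produces an exotic copy with the desired Betti numbers. The two Lagrangian tori $T_1, T_2$ required in the statement are then a \emph{reserved} pair: one performs all the $\pi_1$-killing Luttinger surgeries except along $T_1$ and $T_2$, and verifies by a van Kampen computation that the complement $X_{3,b^-} - (T_1 \cup T_2)$ is already simply connected. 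This is precisely the feature that makes the block usable in the later symplectic sums of Section~\ref{Section 2.3}, where $T_1$ and $T_2$ are glued to the $BK$ tori.

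The main steps, in order, would be: (i) fix the seed symplectic manifold and the collection of Lagrangian tori available for Luttinger surgery, recording their meridians and Lagrangian push-offs in terms of $\pi_1$ generators, much as in Proposition~\ref{Proposition 5}; (ii) choose surgery coefficients so that the resulting manifold is simply connected \emph{and} so that the complement of the two distinguished tori $T_1, T_2$ remains simply connected, carrying out the van Kampen argument that the push-offs of the remaining tori normally generate $\pi_1$; (iii) check minimality, for which the cleanest route is to invoke the Hamilton--Kotschick residual-finiteness criterion \cite{[HKo]} or Usher's theorem on minimality of symplectic sums/Luttinger surgeries, since Luttinger surgery preserves minimality; and (iv) realize the full range $b^- \in \{4,\ldots,19\}$ either by a family of distinct seed constructions or by symplectically summing/desingularizing to adjust $b_2^-$ while holding $b_2^+ = 3$ fixed. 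Finally I would record the characteristic numbers by the forced arithmetic above.

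The hard part will be step (ii): simultaneously arranging that $\pi_1(X_{3,b^-}) = \{1\}$ \emph{and} that the two tori can be removed without creating new fundamental group — i.e.\ that $T_1$ and $T_2$ are superfluous for the simple-connectivity argument. Concretely, one must show that the meridians $\mu_1, \mu_2$ of $T_1, T_2$ already lie in the normal subgroup generated by the relations coming from the other surgeries, so that deleting $T_1 \cup T_2$ does not lose any killing relation. This is a delicate bookkeeping computation with commutator relations of the type appearing in Proposition~\ref{Proposition 5}, and getting the full range of $b^-$ to work uniformly (rather than case-by-case) is where the genuine effort lies; the minimality and the Chern-number arithmetic are comparatively routine given the existing literature.
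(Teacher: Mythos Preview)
Your arithmetic and your reading of what must be shown are correct, but your primary construction strategy has a genuine gap. You propose to start from a known $b^- = 19$ example and ``move down the list'' by blow-downs; this cannot work, since a \emph{minimal} symplectic $4$-manifold by definition contains no smoothly embedded $(-1)$-sphere to blow down, and conversely blowing up a smaller example would immediately destroy minimality. There is no uniform mechanism for decreasing $b_2^-$ while keeping $b_2^+ = 3$ and minimality simultaneously, so the ``single base example plus blow-downs'' plan fails at the outset.

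The paper's approach is not a single argument at all: Proposition~\ref{Proposition 7} is established by Table~1, which records sixteen \emph{separate} constructions, one per value of $b^-$, assembled from the literature (\cite{[Go], [BK], [ABBKP], [AP], [SZs], [FPS]} together with the telescoping triples of Theorem~\ref{Theorem 10}). Each row specifies a particular symplectic sum or Luttinger-surgered manifold---for instance $X_{1,2}\#_{\Sigma_2} (T^2\times\Sigma_2)$ with two Luttinger surgeries for $b^-=4$, $E(1)\#_{T^2} S$ with six Luttinger surgeries for $b^-=11$, $E(1)\#_{T^2} T^4\#_{T^2} E(1)$ for $b^-=19$, etc.---and the two reserved Lagrangian tori with simply connected complement come from the specific building blocks in each case (cf.\ Proposition~\ref{Proposition 5} and Definition~\ref{Definition 8}). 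Your fallback in step~(iv), ``a family of distinct seed constructions,'' is therefore precisely what the paper does, but the actual content lies entirely in collecting and citing those sixteen examples rather than in any uniform proof. The step you flag as hard (step~(ii), the $\pi_1$ bookkeeping for the complement of $T_1\cup T_2$) is handled case-by-case via the cited references, not by one master van~Kampen computation.
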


Table 1 provides a blueprint on the construction of the manifolds in the statement of Proposition \ref{Proposition 7}, and the sources of the building blocks. The notation used in Table 1 is similar to the one of Proposition \ref{Proposition 7}; the manifolds $X_{1, n}$ are homeomorphic to $\mathbb{CP}^2\# n \overline{\mathbb{CP}^2}$. The elliptic surface used is denoted by $E(1) = \mathbb{CP}^2\# 9 \overline{\mathbb{CP}^2}$. The symplectic sum of $X$ and $Y$ along $Z$ is denoted by $X \#_{Z} Y$.  \\

\begin{table}[ht] {\label{Table 1}}
\caption{Minimal symplectic 4-manifolds with $b_2^+ = 3$.}
\centering
\begin{tabular}{c | c | c | c | c }
\hline\hline
$b_2^-$ & $(c_1^2, \chi_h, \sigma)$ & Symplectic sum/manifold. & $\#$ of Luttinger surgeries. & Reference. \\
\hline\hline
$4$   & $(15, 2, -1)$  & $X_{1, 2} \#_{\Sigma_2} T^2\times \Sigma_2$ & $two$ & \cite[Section 9]{[AP]}  \\
$5$    & $(14, 2, -2)$  & $X_{3, 5}$ & $none$ & \cite[Theorem 18]{[BK]} \\
$6$    & $(13, 2, -3)$ & $X_{1, 2} \#_{\Sigma_2} (T^4\# 2\overline{\mathbb{CP}^2})$ & $none$ & \cite[Section 9]{[AP]}\\
$7$    & $(12, 2, -4)$ &$X_{3, 7}$ & $none$ & \cite[Corollary 15]{[BK]} \\
$8$   & $(11, 2, -5)$ & $X_{1, 5}\#_{\Sigma_2} (T^4\# \overline{\mathbb{CP}^2})$ & $none$ & \cite[Sections 3 and 4]{[AP]}.\\
$9$  & $(10, 2, -6)$ & $X_{1, 5} \#_{\Sigma_2} (T^4 \# 2\overline{\mathbb{CP}^2})$ & $none$ & \cite[Corollary 16]{[BK]}.\\

$10$  &$(9, 2, -7)$ & $X_{1, 6} \#_{\Sigma_2} (T^4 \# 2 \overline{\mathbb{CP}^2})$ & $none$ & \cite[Lemma 15]{[AP]}  \\
$11$  & $(8, 2, -8)$ & $E(1) \#_{T^2} S$ & $six$ & \cite[Lemma 16]{[ABBKP]}  \\
$12$   & $(7, 2, -9)$ & $E(1) \#_{T^2} A$ & $none$ & Theorem \ref{Theorem 10} \\
$13$  & $(6, 2, -10)$ & $E(1) \#_{T^2} B$ & $none$ & Theorem \ref{Theorem 10}  \\
$14$  & $(5, 2, -11)$ & $\mathbb{CP}^2 \# 12 \overline{\mathbb{CP}^2} \#_{\Sigma_2} T^2\times \Sigma_2$ & $two$ & \cite[Building Block 5.6]{[Go]}  \\
$15$  & $(4, 2, -12)$ & $\mathbb{CP}^2 \# 13 \overline{\mathbb{CP}^2} \#_{\Sigma_2} T^2\times \Sigma_2$ & $two$ & \cite[Building Block 5.6]{[Go]}  \\
$16$ & $(3, 2, -13)$ & $\mathbb{CP}^2 \# 12 \overline{\mathbb{CP}^2} \#_{\Sigma_2} (T^4\# 2 \overline{\mathbb{CP}^2})$ & $none$ & \cite[Building Blocks 5.6 and 5.6] {[Go]}  \\
$17$ & $(2, 2, -14)$ & $\mathbb{CP}^2 \# 13 \overline{\mathbb{CP}^2} \#_{\Sigma_2} (T^4\# 2 \overline{\mathbb{CP}^2})$ & $none$ & \cite[Building Blocks 5.6 and 5.7]{[Go]}  \\
$18$ & $(1, 2, -15)$ & $S_{1, 1}$ & $none$ & \cite[Example 5.4]{[Go]} \\
$19$ & $(0, 2,  -16)$ & $E(1)\#_{T^2} T^4 \#_{T^2} E(1)$ & $none$ & \cite{[SZs]}  \\


\hline
\end{tabular}
\label{table:masspr}
\end{table}

The following building block was used in \cite{[ABBKP]} to fill in vast regions of the geography. 

\begin{definition}{\label{Definition 8}}  \cite[Definition 2]{[ABBKP]}. An ordered triple $(X, T_1, T_2)$ consisting of a symplectic 4-manifold $X$ and two disjointly 
embedded Lagrangian tori $T_1$ and $T_2$ is called a \emph{telescoping triple} if:
\begin{enumerate}
\item The tori $T_1$ and $T_2$ span a 2-dimensional subspace of $H_2(X; \R)$.
\item The fundamental group is given by $\pi_1(X) \cong \Z^2$ and the inclusion induces an isomorphism $\pi_1(X - (T_1 \cup T_2)) \rightarrow \pi_1(X)$. In particular, the meridians 
of the tori are trivial in $\pi_1(X - (T_1 \cup T_2))$.
\item The image of the homomorphism induced by the corresponding inclusion $\pi_1(T_1) \rightarrow \pi_1(X)$ is a summand $\Z \subset \pi_1(X)$.
\item The homomorphism induced by inclusion $\pi_1(T_2) \rightarrow \pi_1(X)$ is an isomorphism.
\end{enumerate}

\end{definition}

A smooth 4-manifold is minimal if it does not contain any sphere of self intersection $-1$. If $X$ is a minimal manifold, then telescoping triple is called \emph{minimal}. We emphasize the importance of the order  of the tori in the definition. The meridians $\mu_{T_1}$, $\mu_{T_2}$ in $\pi_1(X - (T_1 \cup T_2))$ are trivial and the relevant fundamental groups are abelian. The push-off of an oriented loop $\gamma \subset T_i$ into $X - (T_1 \cup T_2)$ with respect to any (Lagrangian) framing of the normal bundle of $T_i$ represents a well-defined element of 
$\pi_1(X - (T_1 \cup T_2) )$ which is independent of the choices of framing and base-point.\\

The first condition assures us that the Lagrangian tori $T_1$ and $T_2$ are linearly independent in $H_2(X; \R)$. This allows  for the symplectic form on $X$  to be slightly perturbed so that one of the $T_i$ remains Lagrangian while the other becomes symplectic \cite[Lemma 1.6]{[Go]}. Four our purposes, we will require for $T_1$ to be symplectic. \\



\begin{proposition}{\label{Proposition 9}} (cf. \cite[Proposition 3]{[ABBKP]}). Let $(X, T_1, T_2)$ and $(X', T_1', T_2')$ be two telescoping triples. Then for an appropriate 
gluing map the triple

\begin{center}
$(X \#_{T_2, T_1'} X', T_1, T_2')$
\end{center}
is again a telescoping triple. If $X$ and $X'$ are minimal symplectic 4-manifolds, then the resulting telescoping triple is minimal. The Euler characteristic and the signature of $X \#_{T_2, T_1'} X'$ are given by $c_2(X) + c_2(X')$ and $\sigma(X) + \sigma(X')$.\\
\end{proposition}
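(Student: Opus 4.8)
\medskip

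The plan is to verify the four defining conditions of a telescoping triple for $(X \#_{T_2, T_1'} X', T_1, T_2')$ one at a time, after first pinning down what the symplectic sum does to fundamental groups via van Kampen. First I would fix the gluing map: the sum $X \#_{T_2, T_1'} X'$ is formed by removing tubular neighborhoods $\nu T_2 \subset X$ and $\nu T_1' \subset X'$ and identifying their boundary $T^3$-bundles by an orientation-reversing, fiber-preserving diffeomorphism. The ``appropriate gluing map'' is the crux of the whole argument: I must choose the identification of the boundary $3$-tori so that the push-off of $T_2$ in $X - (T_1 \cup T_2)$ is matched with the meridian $\mu_{T_1'}$ of $T_1'$ in $X' - (T_1' \cup T_2')$, and symmetrically the meridian $\mu_{T_2}$ is matched with the push-off of $T_1'$. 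This is exactly the device that lets the two $\Z$-summands telescope, and it is available precisely because condition (3) for $(X,T_1,T_2)$ makes the $T_2$-push-off a generator of a $\Z$-summand while condition (4) for $(X',T_1',T_2')$ makes the $T_1'$-inclusion an isomorphism $\pi_1(T_1') \xrightarrow{\sim} \pi_1(X') \cong \Z^2$.

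\medskip

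The main computation is the Seifert--van Kampen calculation of $\pi_1\bigl((X \#_{T_2,T_1'} X') - (T_1 \cup T_2')\bigr)$. I would write it as the amalgamated product of $\pi_1(X - (T_1 \cup T_2))$ and $\pi_1(X' - (T_1' \cup T_2'))$ over $\pi_1(T^3)$ of the gluing region, using the boundary identification fixed above. By condition (2) for each triple, both meridians $\mu_{T_2}$ and $\mu_{T_1'}$ are already trivial in the respective complements, and the inclusions $\pi_1(\text{complement}) \to \pi_1(X)$, resp. $\pi_1(X')$, are isomorphisms onto $\Z^2$. The gluing identifies the $T_2$-push-off in $X$ with the $T_1'$-push-off in $X'$ (both surviving the meridian-killing), so the amalgamation glues the two copies of $\Z^2$ along a common $\Z$, yielding $\Z^2$ again; the surviving meridians $\mu_{T_1}$ and $\mu_{T_2'}$ remain trivial. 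This simultaneously establishes condition (2) for the new triple, that the inclusion of the complement induces an isomorphism onto $\pi_1 \cong \Z^2$, and sets up conditions (3) and (4): the $T_1$-summand is inherited from the first factor (condition (3) for $(X,T_1,T_2)$) and the $T_2'$-isomorphism is inherited from the second factor (condition (4) for $(X',T_1',T_2')$).

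\medskip

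The remaining conditions are lighter. For condition (1), I would argue that $T_1$ and $T_2'$ remain Lagrangian and linearly independent in $H_2(X \#_{T_2,T_1'} X'; \R)$: they are Lagrangian because the Gompf symplectic sum can be performed so that surfaces disjoint from the summing loci retain their Lagrangian character, and linear independence follows since $T_1$ lives essentially in the $X$-summand and $T_2'$ in the $X'$-summand, with the Mayer--Vietoris sequence keeping their classes independent. Minimality under the hypothesis that $X$ and $X'$ are minimal is the assertion that the symplectic sum of minimal symplectic $4$-manifolds along tori of self-intersection zero is again minimal; I would invoke the standard Usher-type result (the symplectic sum along a nontorus, or here a torus of square zero with the summands minimal, is minimal). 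Finally the characteristic-number formulas $c_2 = c_2(X) + c_2(X')$ and $\sigma = \sigma(X) + \sigma(X')$ follow from Gompf's formulas \cite[p.~535]{[Go]} for the symplectic sum along surfaces of square zero, since $\chi(T^2) = 0$ and the signatures add (Novikov additivity) with no contribution from the genus-one summing region. The hard part, to reiterate, is the explicit choice of gluing map in the first step and the bookkeeping in the van Kampen amalgamation; everything downstream is inherited from the structure of the two input triples.
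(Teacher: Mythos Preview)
Your overall strategy---van Kampen for condition (2), inheriting (3) and (4) from the factors, Mayer--Vietoris for (1), Usher for minimality, Gompf's formulas for the characteristic numbers---is exactly the standard argument; the paper itself gives no proof but simply cites \cite{[Go], [ABBKP], [AP]} and Usher's theorem \cite{[U]}, so you are supplying the details the paper omits.

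There is, however, a genuine error in your description of the gluing map. In a Gompf symplectic sum along a torus the boundary $T^3 \cong T^2 \times S^1$ carries the symplectic $T^2$ as one factor and the meridian circle as the other, and the gluing diffeomorphism must be of the form $\phi \times r$ with $\phi: T_2 \to T_1'$ a symplectomorphism and $r$ an orientation-reversing identification of the normal circles. In particular meridian must go to meridian and push-offs to push-offs; your proposed identification of a push-off of $T_2$ with $\mu_{T_1'}$ (and of $\mu_{T_2}$ with a push-off of $T_1'$) is not a symplectic-sum gluing and would not produce a symplectic manifold. Your second paragraph silently drops this and reverts to a push-off-to-push-off identification, which is the correct picture, but then your explanation of why the amalgamation collapses to $\Z^2$ no longer matches what you set up.

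The actual ``appropriate gluing map'' is a choice of $\phi$ acting by an element of $SL(2,\Z)$ on the torus factor. Since $\pi_1(T_2) \to \pi_1(X)$ is an isomorphism (condition (4)) and the image of $\pi_1(T_1') \to \pi_1(X')$ is a rank-one summand (condition (3) for $X'$), one push-off generator of $T_1'$ dies in $\pi_1(X')$; one chooses $\phi$ so that this dead generator is matched with a $T_2$-push-off generating a complement to the image of $\pi_1(T_1)$ in $\pi_1(X)$, and the surviving $T_1'$-push-off is matched with a $T_2$-push-off lying in that image. Both meridians are already trivial by condition (2), so van Kampen collapses $\pi_1(X) \ast_{\Z^2} \pi_1(X')$ to a single copy of $\Z^2$ in which the image of $\pi_1(T_1)$ remains a $\Z$-summand and $\pi_1(T_2')$ still surjects isomorphically. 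With this correction the rest of your outline is sound.
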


The proof of Proposition \ref{Proposition 9} follows from \cite{[Go], [ABBKP], [AP]}, and the claim about minimality follows from Usher's theorem \cite{[U]}. The results minding the existence of telescoping triples are gathered in the following theorem, which was proven in  \cite[Section 5]{[ABBKP]}, \cite[Lemma 6, Lemma 7]{[T1]}.

\begin{theorem}{\label{Theorem 10}}  Existence of telescoping triples.
\begin{itemize} 
\item There exists a minimal telescoping triple $(A, T_1, T_2)$ satisfying $c_2(A) = 5$, $\sigma(A) = - 1$.
\item For each $g\geq 0$, there exists a minimal telescoping triple $(B_g, T_1, T_2)$ satisfying $c_2(B_g) = 6 +
4g$, $\sigma(B_g) = - 2$.
\item There exists a minimal telescoping triple $(C, T_1, T_2)$ satisfying  $c_2(C) = 7$, $\sigma(C) = - 3$.
\item There exists a minimal telescoping triple $(D, T_1, T_2)$ satisfying $c_2(D) = 8$, $\sigma(D)
= - 4$.
\item There exists a minimal telescoping triple $(F, T_1, T_2)$ satisfying $c_2(F) = 10$, $\sigma(F) = - 6$.\\ 
\end{itemize}
\end{theorem}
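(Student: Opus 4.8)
The plan is to construct each telescoping triple explicitly as an iterated symplectic sum of small, concrete symplectic building blocks, and then to verify the four defining conditions of Definition \ref{Definition 8} by tracking the fundamental group through each Gompf sum via van Kampen. The Euler-characteristic and signature bookkeeping is handled entirely by Proposition \ref{Proposition 9}, which tells us that under the gluing $(X \#_{T_2,T_1'} X', T_1, T_2')$ the invariants simply add; so the arithmetic reduces to finding, for each prescribed pair $(c_2,\sigma)$, a decomposition into pieces whose Euler characteristics and signatures sum correctly. For the base case $(A,T_1,T_2)$ with $c_2=5,\sigma=-1$, the natural candidate is a minimal symplectic $4$-manifold homeomorphic to $\mathbb{CP}^2 \# 4\overline{\mathbb{CP}^2}$ equipped with a suitable pair of Lagrangian tori — exactly the sort of object produced by applying Luttinger surgeries to a product such as $T^2 \times \Sigma_2$, whose fundamental group and Lagrangian tori are catalogued in Proposition \ref{Proposition 5}. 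The remaining triples $(B_g,T_1,T_2)$, $(C,T_1,T_2)$, $(D,T_1,T_2)$, $(F,T_1,T_2)$ are then obtained either by an analogous direct construction or by summing the base triple with auxiliary blocks (for instance fiber-summing with a copy of $T^2\times\Sigma_2$ or with an elliptic-surface piece) so that each sum drops $\sigma$ by the required amount while raising $c_2$ correspondingly; the family $B_g$ accommodates its genus dependence $c_2(B_g)=6+4g$ by summing along a genus-two surface or by increasing the genus of the surface factor.

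First I would fix the base triple $A$ and establish that its pair of Lagrangian tori satisfies the four conditions: that $T_1,T_2$ are linearly independent in $H_2(A;\R)$ (condition (1)), that $\pi_1(A)\cong\Z^2$ with the complement inclusion an isomorphism so that both meridians die (condition (2)), that $\pi_1(T_1)\to\pi_1(A)$ lands on a rank-one summand $\Z$ (condition (3)), and that $\pi_1(T_2)\to\pi_1(A)$ is an isomorphism onto $\Z^2$ (condition (4)). The homological independence follows from the Luttinger-surgery description, since the tori descend from the two obvious homologically essential tori in the product, and the Luttinger surgeries are chosen to annihilate precisely the meridians while leaving the push-offs generating $\Z^2$. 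The asymmetry between conditions (3) and (4) — $T_1$ mapping to a summand but $T_2$ mapping onto everything — is dictated by the Lagrangian framings recorded in Proposition \ref{Proposition 5}, and getting the surgery curves right so that the push-offs of $T_1$ and $T_2$ realize this precise pattern is where the care lies.

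Next I would carry out the inductive/compositional step. Given the base triple and the auxiliary blocks, I would apply Proposition \ref{Proposition 9} repeatedly: each application glues $T_2$ of the left piece to $T_1'$ of the right piece, producing a new triple whose \emph{outer} tori are $T_1$ of the left and $T_2'$ of the right, and whose invariants add. To land on the target values I would select the auxiliary summands so that their $(c_2,\sigma)$ contributions telescope to the differences $(6+4g,-2)-(5,-1)=(1+4g,-1)$ for $B_g$, and analogously $(2,-2)$ for $C$, $(3,-3)$ for $D$, and $(5,-5)$ for $F$; a block with $(c_2,\sigma)=(1,-1)$ (such as a once-blown-up torus-type piece arranged to remain minimal) is the workhorse, with the genus parameter in $B_g$ absorbed by replacing a genus-two gluing surface with a higher-genus one. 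Minimality of each resulting triple is \emph{not} something I need to re-prove by hand: it is exactly the content of Usher's theorem \cite{[U]} as invoked in Proposition \ref{Proposition 9}, so as long as every building block is itself minimal, minimality propagates automatically through the sums.

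The main obstacle I anticipate is \textbf{not} the numerical matching — that is forced by additivity — but the precise verification of conditions (3) and (4) after each symplectic sum. The delicate point is that the gluing map identifying $T_2$ with $T_1'$ must be chosen so that the push-off of $T_2$, which generates $\pi_1$ of the right-hand piece under condition (4), is glued to the push-off of $T_1'$, which only generates a summand under condition (3); matching these framings and orientations correctly is what guarantees the composed triple again has $\pi_1\cong\Z^2$ with the two surviving outer tori satisfying the summand-versus-isomorphism dichotomy. Concretely, I expect the hard work to be a careful van Kampen computation at each sum, showing that the meridians introduced by the gluing are killed and that the Lagrangian push-offs of the glued-together tori are identified in a way that neither collapses $\Z^2$ to a smaller group nor inflates it. Once the gluing data from Proposition \ref{Proposition 5} and the framing conventions of \cite{[ABBKP], [T1]} are pinned down, each verification becomes a finite check, and the existence of all five triples follows.
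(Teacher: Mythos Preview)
There is a genuine gap in your inductive step. Proposition \ref{Proposition 9} requires \emph{both} inputs to already be telescoping triples, so the ``auxiliary blocks'' you would sum against $A$ to reach $B_g, C, D, F$ must themselves be telescoping triples with invariants $(c_2,\sigma)=(1+4g,-1)$, $(2,-2)$, $(3,-3)$, $(5,-5)$. These are no easier to produce than the triples listed in the theorem --- several are smaller --- so the argument is circular. Your proposed workhorse block, ``a once-blown-up torus-type piece arranged to remain minimal,'' is a contradiction in terms: blowing up introduces an exceptional $(-1)$-sphere, so the result is never minimal. Additivity alone cannot bootstrap the whole family from a single base case; Proposition \ref{Proposition 9} is a device for \emph{combining} telescoping triples once you have a stock of them, not for manufacturing the basic supply.

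The paper does not prove Theorem \ref{Theorem 10} in-line; it cites \cite[Section 5]{[ABBKP]} and \cite[Lemma 6, Lemma 7]{[T1]}, where each of $A, B_g, C, D, F$ is built \emph{independently} by performing a tailored sequence of Luttinger surgeries on an explicit symplectic product or symplectic sum of products, after which conditions (1)--(4) of Definition \ref{Definition 8} are verified directly from meridian and push-off data of the kind recorded in Proposition \ref{Proposition 5}. Your plan for the base case $A$ is on the right track; the correction is to carry out that same style of direct construction for each of the remaining triples separately, rather than attempting to derive them from $A$ via Proposition \ref{Proposition 9}.
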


\subsection{4-dimensional building blocks used to populate geographic regions II}{\label{Section 2.5}} The building blocks of the previous sections are used to produce our guiding building blocks we will use in the proofs of our main results by using R.E. Gompf's symplectic sum \cite[Theorem 1.3]{[Go]}. A precise statement is the following.\\

\begin{proposition}{\label{Proposition 11}} (cf. \cite[Theorem A]{[ABBKP]}, \cite[Theorem 1]{[AP]}). Let $e, \sigma$ be integers that satisfy $2e + 3\sigma \geq 0, e + \sigma \equiv 0$ mod 4, $e + \sigma \geq 8$, and assume $\sigma \leq -1$. Then there exist simply connected minimal symplectic 4-manifolds $Z_{1, 1}$ and $Z_{1, 2}$, both with 
Euler characteristic $e$, signature $\sigma$, and with odd intersection form over $\Z$. Moreover, the 4-manifold $Z_{1,1}$ contains two homologically essential Lagrangian tori $T_1$ and $T_2$ such that\\
\begin{center}
$\pi_1(Z_{1,1} - (T_1 \cup T_2)) \cong \{1\}$.\\
\end{center}
The 4-manifold $Z_{1,2}$ contains a homologically essential Lagrangian torus $T_1$, and a symplectic surface of genus two $F$ such that
\begin{center}
$\pi_1(Z_{1,2} - (T_1 \cup F)) \cong \{1\}$.\\
\end{center}
\end{proposition}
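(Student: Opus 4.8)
The plan is to realize the required $4$-manifolds $Z_{1,1}$ and $Z_{1,2}$ by assembling the telescoping-triple building blocks of Theorem \ref{Theorem 10} via the fiber-summing operation of Proposition \ref{Proposition 9}, and to then show that the resulting characteristic numbers sweep out exactly the arithmetic region $\{2e+3\sigma\geq 0,\ e+\sigma\equiv 0 \bmod 4,\ e+\sigma\geq 8,\ \sigma\leq -1\}$. First I would reduce the statement to a counting problem: Proposition \ref{Proposition 9} tells us that summing telescoping triples adds Euler characteristics and signatures, so starting from the pieces $A,B_g,C,D,F$ (with $(c_2,\sigma)$ equal to $(5,-1)$, $(6+4g,-2)$, $(7,-3)$, $(8,-4)$, $(10,-6)$ respectively) I can build a triple whose $(c_2,\sigma)$ is any prescribed nonnegative-integer combination of these vectors. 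The target values are $c_2=e$ and $\sigma=\sigma$ with $\chi_h=\tfrac14(e+\sigma)$ an integer $\geq 2$; I would verify that the congruence $e+\sigma\equiv 0\bmod 4$ together with $2e+3\sigma\geq 0$ and $e+\sigma\geq 8$ guarantees that such a combination exists, writing $e$ and $\sigma$ in terms of how many copies of each block are summed and solving the resulting linear Diophantine system.

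Next I would address the topological refinements. Minimality is automatic: each building block is a minimal symplectic $4$-manifold, and Proposition \ref{Proposition 9} (via Usher's theorem \cite{[U]}) preserves minimality of telescoping triples, so the final sum is minimal. Simple connectivity is where the telescoping-triple bookkeeping does the work: property (2) of Definition \ref{Definition 8} says the meridians die and $\pi_1\cong\Z^2$, while properties (3) and (4) track how the two $\Z$ summands are carried by $T_1$ and $T_2$ through the successive gluings; performing one final symplectic sum with a simply connected piece (for instance an elliptic surface $E(1)$ along a cusp torus, or an $X_{3,b^-}$ from Proposition \ref{Proposition 7}) kills the remaining $\Z^2$ and yields $\pi_1(Z_{1,1})\cong\{1\}$ while leaving two Lagrangian tori $T_1,T_2$ with simply connected complement. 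I would ensure the odd intersection form by arranging at least one summand (e.g.\ an $X_{1,n}$-type blow-up region appearing inside the blocks, or $E(1)$ itself) to contribute a $\langle 1\rangle$ summand, since connected-summing in a $\mathbb{CP}^2$-class makes the form odd.

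For $Z_{1,2}$ I would modify the final gluing so that instead of destroying both homological directions I retain one Lagrangian torus $T_1$ and install a symplectic genus-two surface $F$ of self-intersection zero with simply connected complement; the natural way to produce $F$ is to sum against the block $T^2\times\Sigma_2$ of Section \ref{Section 2.1} along a genus-two surface, or to use a $Z_{1,1}$-type manifold containing such an $F$ as in the constructions underlying Theorem \ref{Theorem Y}. The complement condition $\pi_1(Z_{1,2}-(T_1\cup F))\cong\{1\}$ then follows by the same meridian-killing argument, since the meridian of $F$ is a commutator $[x,y]$ (cf. Proposition \ref{Proposition 5}) that becomes trivial once the ambient group is killed. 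Throughout, I would keep $c_2$ and $\sigma$ unchanged under these final genus-two sums by choosing blocks with vanishing Euler characteristic and signature, exactly as $T^2\times\Sigma_2$ provides.

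The main obstacle I expect is the combinatorial realizability step: showing that \emph{every} admissible pair $(e,\sigma)$ in the stated region—not merely a cofinite subset—is expressible as a nonnegative combination of the available block vectors while simultaneously being compatible with the final simply-connecting sum (which itself contributes fixed amounts to $c_2$ and $\sigma$). The coarse lattice generated by $\{(5,-1),(7,-3),(8,-4),(10,-6)\}$ together with the one-parameter family $\{(6+4g,-2):g\geq 0\}$ must be shown to cover the triangular region cut out by $2e+3\sigma\geq 0$ and $\sigma\leq -1$ above the line $e+\sigma=8$, and the delicate part is handling the boundary and small-$\chi_h$ cases where few blocks are available; here the flexibility of the genus parameter $g$ in $B_g$ and the freedom to absorb a correction by the choice of simply connected cap are what rescue the argument.
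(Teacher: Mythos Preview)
Your plan for $Z_{1,1}$ is essentially the paper's: assemble telescoping triples from Theorem~\ref{Theorem 10} and cap with the simply connected pieces of Proposition~\ref{Proposition 7}, with the combinatorial realizability of every admissible $(e,\sigma)$ deferred to the cited references exactly as the paper does.

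For $Z_{1,2}$ there is a genuine gap. You cannot ``sum against $T^2\times\Sigma_2$ along a genus-two surface'': $Z_{1,1}$ carries no such surface---that is precisely what you are trying to produce---and in any case a genus-two sum with a block of vanishing $(e,\sigma)$ would shift $e$ by $+4$, contrary to your claim that the characteristic numbers are preserved. The only viable move is the torus sum $Z:=Z_{1,1}\#_{T_1=T^2\times\{pt\}}(T^2\times\Sigma_2)$, and this identifies the torus generators $x,y$ with trivial loops but does nothing to the surface generators $a_1,b_1,a_2,b_2$: Seifert--van Kampen gives $\pi_1(Z)\cong\pi_1(\Sigma_2)$, not $\{1\}$. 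Your sentence ``$\mu_F=[x,y]$ becomes trivial once the ambient group is killed'' presupposes exactly what must be proved. The step you are missing, and which the paper supplies, is to perform four Luttinger surgeries on the Lagrangian tori of Proposition~\ref{Proposition 5} inside the $T^2\times\Sigma_2$ block after the torus sum; since $x=y=1$ each meridian $\mu_i$ (a commutator involving $x$ or $y$) already vanishes, so the surgeries impose $a_1=b_1=a_2=b_2=1$ without introducing new generators and without altering $e$ or $\sigma$. A parallel copy of $\{pt\}\times\Sigma_2$ together with the unused torus $T_2\subset Z_{1,1}$ then furnish the required $F$ and Lagrangian torus with simply connected complement.
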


\begin{remark}{\label{Remark 12}} At this point we emphasize that Proposition \ref{Proposition 7} and Proposition \ref{Proposition 11} are fruits of the efforts of several topologists (see \cite{[BK0], [BK], [ABBKP], [AP], [T0]} and the references provided there). The observation regarding the existence of the submanifolds is crucial for our purposes, and it has not appeared in the literature previously. \\
\end{remark}

\begin{proof} The construction of the 4-manifold $Z_{1,1}$ is obtained by an iterated use of the telescoping triples of Theorem \ref{Theorem 10}, Proposition \ref{Proposition 7}, Proposition \ref{Proposition 5} (which cover most of the geographic points), and the results of \cite{[BK], [ABBKP], [AP]} and \cite[Section 4]{[FPS]}. The details are left to the reader (see \cite{[T0]} for details).

 The construction of the 4-manifold $Z_{1, 2}$ goes as follows. Build the symplectic sum $Z:= Z_{1, 1} \#_{T_1 = T^2} T^2 \times \Sigma_2$. Apply four Luttinger surgeries to obtain a simply connected symplectic 4-manifold $Z_{1, 2}$; see Proposition \ref{Proposition 5} for the fundamental group computations. The torus $T_2$ is contained in $Z_{1, 2}$, and so is a parallel copy of the genus two surface $\{x\}\times \Sigma_2 \subset T^2\times \Sigma_2$ that we now call $F$ \cite[Corollary 1.7]{[Go]}. The meridian $\mu_F$ is trivial in the complement by Proposition \ref{Proposition 5}.

\end{proof}

\section{Geography of symplectic 4-manifolds with prescribed fundamental group.}{\label{Section 3}}

In this Section we give a proof of Theorem \ref{Theorem 1}. The proof is split in two parts 

\subsection{Manifolds with odd intersection form and negative signature.}{\label{Section 3.1}} The first part of Theorem \ref{Theorem 1} is proven in this section. We build greatly upon the progress on the geography problem for nonspin simply connected symplectic 4-manifolds with negative signature \cite{[Go], [JP], [BK0], [BK], [FPS], [ABBKP],[AP]}, and the study of minimal symplectic 4-manifolds with arbitrary fundamental group \cite{[BK1], [Y]}.

\begin{theorem}{\label{Theorem 14}} Let $G$ be a group with a presentation consisting of $g$ generators and $r$ relations. Let $e, \sigma$ be integers that satisfy $2e + 3\sigma \geq 0, e + \sigma \equiv 0$ mod 4, $e + \sigma \geq 8$, and assume $\sigma \leq -1$. Then there exists a minimal symplectic 4-manifold $M(G)$ with fundamental group $\pi_1(M(G)) \cong G$, and characteristic numbers
\begin{center}
$c_2(M(G)) = e + 4(g + r)$, and $\sigma(M(G)) = \sigma$.
\end{center}
The intersection form over $\Z$ of $M(G)$ is odd. If the group $G$ is residually finite, then $M(G)$ is irreducible. If the group $G$ is finite, then $M(G)$ admits exotic smooth structures.\\
\end{theorem}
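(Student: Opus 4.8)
The plan is to assemble $M(G)$ as a single iterated symplectic sum, combining the geography-filling blocks of Proposition \ref{Proposition 11} with the fundamental-group-controlling block $BK$ of Section \ref{Section 2.2}, and then to correct the characteristic numbers using Gompf's additivity formulae. Concretely, I would start from a simply connected minimal symplectic 4-manifold $Z_{1,1}$ with Euler characteristic $e$, signature $\sigma$, odd intersection form, and containing a homologically essential Lagrangian torus $T_1$ with $\pi_1(Z_{1,1}-(T_1\cup T_2))\cong\{1\}$, which exists precisely under the arithmetic hypotheses $2e+3\sigma\geq 0$, $e+\sigma\equiv 0 \bmod 4$, $e+\sigma\geq 8$, $\sigma\leq-1$. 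After perturbing the symplectic form (using linear independence in $H_2$, \cite[Lemma 1.6]{[Go]}) so that $T_1$ becomes symplectic, I would form the symplectic sum $Z_{1,2}:=Z_{1,1}\#_{T_1=T^2}T^2\times\Sigma_2$ followed by four Luttinger surgeries to produce, as in Proposition \ref{Proposition 11}, a simply connected block containing a symplectic genus-two surface $F$ with $\pi_1(Z_{1,2}-F)\cong\{1\}$ and $\mu_F$ nullhomotopic in the complement.

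The next step is to splice in the group. I would perform the symplectic sum $Z:=BK\#_{T_0}X$ exactly as in Section \ref{Section 2.3}, taking $X=Z_{1,2}$ (viewed along a symplectic torus of square zero whose complement is simply connected), so that the verbatim argument of \cite[Sections 3,4]{[Y]} gives $\pi_1(Z)\cong\langle g_1,\dots,g_g\mid r_1,\dots,r_r\rangle\cong G$. The key point is that summing $BK$ with the simply connected $X$ kills the generators $s,t$ and introduces exactly the $g$ generators and $r$ relations of the chosen presentation without adding anything else, since all meridians of the summing surfaces die in the relevant complements. I would define $M(G):=Z$.

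For the characteristic numbers I would use Gompf's additivity \cite[p.\ 535]{[Go]}: symplectic summing along a surface $\Sigma$ of genus $h$ and square zero gives $c_2(X\#_\Sigma Y)=c_2(X)+c_2(Y)+4(h-1)$ and $\sigma(X\#_\Sigma Y)=\sigma(X)+\sigma(Y)$. Summing along tori ($h=1$) contributes nothing to $c_2$ and nothing to $\sigma$, so the torus sums giving $BK$ and its attachment leave the signature at $\sigma(Z_{1,1})=\sigma$ and add only the $c_2(BK)=4(g+r)$ computed in Section \ref{Section 2.2}. Tracking the Luttinger surgeries (which change neither $c_2$ nor $\sigma$) and the genus-two sum used to build $Z_{1,2}$ — whose Euler-characteristic contribution is absorbed into the count already recorded for $Z_{1,2}$ — yields $c_2(M(G))=e+4(g+r)$ and $\sigma(M(G))=\sigma$. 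Oddness of the intersection form is inherited from $Z_{1,1}$, whose odd summands survive the sums. Minimality follows from Usher's theorem \cite{[U]}, residual finiteness gives irreducibility via \cite{[HKo]}, and for finite $G$ the exotic structures come from the standard knot-surgery/log-transform variation on the Lagrangian torus $T_2$, distinguished by Seiberg–Witten invariants after passing to the universal cover.

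The main obstacle I expect is bookkeeping the fundamental group under the composite of symplectic sums and four Luttinger surgeries: I must verify that perturbing $T_1$ to be symplectic does not disturb the computation of $\pi_1(Z_{1,2}-F)$, that the meridians $\mu_F$ and the pushoffs of $\pi_1(T_0)$ genuinely vanish so that van Kampen collapses to exactly the desired presentation, and that no stray generator is reintroduced by the gluing maps. The characteristic-number arithmetic is routine once the additivity formulae are applied, and the oddness and minimality claims are structural; the delicate part is entirely in confirming that the group is cut down to $G$ and not to some quotient or extension.
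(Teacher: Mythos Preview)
Your core idea---symplectic-summing $BK$ with a simply connected block from Proposition~\ref{Proposition 11} along a torus---is exactly what the paper does, and your treatment of the characteristic numbers, oddness, minimality via Usher, and irreducibility via \cite{[HKo]} all match. However, you take an unnecessary detour: the paper simply sets $M(G):=Z_{1,1}\#_T BK$, summing $BK$ directly against $Z_{1,1}$ along its Lagrangian torus. There is no need to first pass to $Z_{1,2}$ or to produce a genus-two surface $F$, since the sum with $BK$ is along a \emph{torus} $T_0$, not along a genus-two surface; the block $Z_{1,2}$ and its surface $F$ play no role in this theorem (they are reserved for the 6-dimensional constructions). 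Your parenthetical about ``the genus-two sum used to build $Z_{1,2}$'' is also a misstatement---that sum is along a torus.

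More substantively, your argument for exotic smooth structures is not the paper's and is underjustified. You propose distinguishing via Seiberg--Witten invariants of the universal cover; the paper instead argues by pigeonhole: one produces infinitely many pairwise nondiffeomorphic manifolds with the same $\pi_1\cong G$ and the same Euler characteristic (via knot surgery and the product formulas of \cite{[Ta], [MMS]}), and then invokes Hambleton--Kreck \cite[Corollary~1.5]{[HK]}, which says that for finite $\pi_1$ and fixed Euler characteristic there are only finitely many homeomorphism types. Hence infinitely many of the constructed manifolds must fall into a single homeomorphism class, giving exotic structures. Your universal-cover approach would require checking that knot surgery downstairs genuinely alters the Seiberg--Witten invariants of the finite cover, which is plausible but not immediate; the paper's route avoids this entirely.
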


\begin{proof}  The proof of the existence claim is verbatim to the proof of \cite[Theorem 1]{[Y]}. It consists of building the symplectic sum \cite[Theorem 1.3]{[Go]} $M(G): = Z_{1, 1} \#_T BK$ along a symplectic torus of the manifold $BK$ described in Section \ref{Section 2.2} and a manifold $Z_{1, 1}$ from Proposition \ref{Proposition 11}. 

If the group $G$ is residually finite, then $M(G)$ is irreducible by \cite{[HKo]}.

The claim minding the existence of exotic smooth structures on 4-manifolds follows from a pigeonhole argument (cf \cite[Remark 1.2]{[Y]}). An infinite family of pairwise nondiffeomorphic manifolds sharing the same fundamental group and characteristic numbers is constructed using the techniques in \cite{[FS], [FS1], [FPS]}, and keeping track of the Seiberg-Witten invariants using \cite{[Ta], [MMS]}; see \cite[Remark on p. 343]{[BK]}. It is proven in \cite[Corollary 1.5]{[HK]} that there are only finitely many homeomorphism types of closed oriented 4-manifolds with finite fundamental group, and a given Euler characteristic. Thus, there exist exotic smooth structures as it was claimed.
\end{proof}

\subsection{Spin manifolds with negative signature.}{\label{Section 3.2}} In this Section, we prove the second part of Theorem \ref{Theorem 1}. B. D. Park and Z. Szab\'o settled the geography of simply connected spin symplectic 4-manifolds in \cite{[PS]} (see \cite{[JP1]} as well). The task at hand is to extend their results to the geography problem for spin manifolds with finitely presented fundamental group using the building block described in Section \ref{Section 2.2}. We then follow J. Park's work \cite{[JP]} to build exotic smooth structures on the manifolds constructed by using R.Stern and R. Fintushel's Knot surgery \cite{[FS]} and using S. Boyer work \cite{[B]} on homeomorphisms of simply connected 4-manifolds with a given boundary.\\

Before we begin proving Theorem \ref{Theorem 1}, we take a moment to recall S. Boyer's \cite{[B]} results on homeomorphisms of simply connected 4-manifolds with a given boundary. We follow J. Park's exposition in \cite{[JP]}.  Let $V_1$ and $V_2$ be compact connected simply connected 4-manifolds with connected boundary $\partial V_1 \cong \partial V_2$, and let $f: \partial V_1 \rightarrow \partial V_2$ be an orientation-preserving homeomorphism. According to Boyer, there are two obstructions that need be overcome in order to extend $f$ to a homeomorphism $F: V_1\rightarrow V_2$. First, an isometry $\Lambda: (H_2(V_1; \Z), Q_{V_1}) \rightarrow (H_2(V_2; \Z), Q_{V_2})$ for which the following diagram commutes

\begin{center}
$\begin{CD} 
0@>>> H_2(\partial V_1)@>>> H_2(V_1)@>>> H_2(V_1, \partial V_1)@>>>  H_1(\partial V_1)@>>> 0\\ 
@.@VVf_{\ast}V @VV\Lambda V@AA\Lambda^{\ast} A @VVf_{\ast}V\\ 
0@>>> H_2(\partial V_2)@>>> H_2(V_2)@>>> H_2(V_2, \partial V_2)@>>>  H_1(\partial V_2)@>>> 0\\ 
 \end{CD}$ 
\end{center}

must exist. In this context, an isometry is an isomorphism $\Lambda: H_2(V_1; \Z) \rightarrow H_2(V_2; \Z)$ that preserves intersection forms $Q_{V_i}$. The morphism $\Lambda^{\ast}$ in the diagram is the adjoint of $\Lambda$ with respect to the identification of $H_2(V_i, \partial V_i)$ with $Hom(H_2(V_i); \Z)$ that arises from Lefschetz duality. This information is encoded in the pair $(f, \Lambda)$, which is called a \emph{morphism} and it is denoted symbolically by $(f, \Lambda): V_1\rightarrow V_2$.\\

Second, one must find a homeomorphism $F: V_1\rightarrow V_2$ such that the morphism satisfies $(f, \Lambda) = (F|_{\partial V_1}, F_{\ast})$. A morphism $(f, \Lambda)$ is \emph{geometrically realized}, if this condition is satisfied. In \cite{[B]}, S. Boyer studied conditions under which a given morphism can be geometrically realized.

\begin{theorem}{\label{Theorem Bo}} (Boyer \cite[Theorem 0.7, Proposition 0.8]{[B]}). If $(f, \Lambda): V_1\rightarrow V_2$ is a morphism between two simply connected compact smooth 4-manifolds $V_1, V_2$ with boundary $\partial V_1 \cong \partial V_2$, there is an obstruction $\theta(F, \Lambda) \in I^1(\partial V_1)$ such that $(f, \Lambda)$ is realized geometrically if and only if $\theta(f, \Lambda) = 0$. Moreover, if $H_1(\partial V_1; \Q) = 0$, then $\theta(f, \Lambda) = 0$.\\
\end{theorem}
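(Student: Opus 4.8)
The plan is to recognize at the outset that this is Boyer's classification theorem and that any self-contained argument must sit inside Freedman's topological surgery theory for simply connected $4$-manifolds. Since $V_1$ and $V_2$ are simply connected, each admits a handle decomposition with a single $0$-handle, no $1$-handles, a collection of $2$-handles, and higher handles, so all of the essential content is concentrated in the middle dimension and is recorded by the intersection form $Q_{V_i}$ together with the gluing to $\partial V_i$. The first step is to reinterpret the datum of a morphism $(f,\Lambda)$ purely homologically: the commuting diagram in the statement says precisely that $\Lambda$ is compatible with the long exact sequences of the pairs $(V_i,\partial V_i)$ and that its Lefschetz adjoint $\Lambda^{\ast}$, under the identification $H_2(V_i,\partial V_i)\cong \Hom(H_2(V_i);\Z)$, matches $f_{\ast}$ on the homology of the boundary.

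Second, I would try to realize $\Lambda$ by a handle-by-handle construction: form a relative cobordism interpolating between $V_1$ and $V_2$ with boundary identification prescribed by $f$, and attempt to reduce it to a product using the topological $s$-cobordism theorem, which is available here because $\pi_1=1$. The obstruction $\theta(f,\Lambda)$ is exactly the measured failure of this simplification. The third step is the algebraic identification of its receptacle $I^1(\partial V_1)$, an abelian group manufactured from the torsion linking pairing on $H_1(\partial V_1)$ together with the boundary self-homeomorphisms acting trivially on homology. The substance of the theorem is that $\theta$ is a \emph{complete} invariant: a morphism is geometrically realized if and only if $\theta$ vanishes, and the sufficiency direction is where Freedman's realization results, adapted from the closed simply connected case to the bounded setting, must be imported.

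For the final clause, the key observation is that $I^1(\partial V_1)$ is built entirely from the torsion of $H_1(\partial V_1)$ and its linking form. When $H_1(\partial V_1;\Q)=0$ the boundary is a rational homology sphere, so $H_1(\partial V_1;\Z)$ is finite, the relevant exact sequences degenerate, and the Lefschetz-duality pairing $H_2(V_i,\partial V_i)\cong \Hom(H_2(V_i);\Z)$ becomes perfect up to finite torsion. Consequently any $\Lambda$ fitting into the diagram can be promoted to a homeomorphism without further choice, forcing $\theta(f,\Lambda)=0$; this reduction is comparatively formal once the obstruction group is correctly pinned down.

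I expect the main obstacle to be the precise definition and functoriality of $\theta(f,\Lambda)$: verifying that the element it produces in $I^1(\partial V_1)$ is independent of the chosen handle decomposition, and, above all, that its vanishing is genuinely \emph{sufficient} rather than merely necessary for realization. That sufficiency is where the deep topological surgery input enters and is the technical heart of Boyer's Theorem 0.7, whereas the rational-homology-sphere reduction of Proposition 0.8 follows once the obstruction group has been identified.
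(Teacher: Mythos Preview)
The paper does not prove this theorem at all: it is stated as a quotation of Boyer's results \cite[Theorem 0.7, Proposition 0.8]{[B]} and used as a black box in the proof of Theorem \ref{Theorem 13}. There is therefore no ``paper's own proof'' to compare your proposal against. Your sketch is a reasonable high-level outline of the ideas underlying Boyer's argument (Freedman-style topological surgery in the simply connected setting, with the obstruction living in a group built from the torsion linking form of the boundary), but for the purposes of this paper no proof is required or expected; one simply cites Boyer and moves on.
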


Particularly useful is the following corollary, which appears in J. Park's work.\\

\begin{corollary}{\label{Corollary Bo}} (cf. \cite[Corollary 2.3]{[JP]}). Assume $\partial V_1 \cong \partial V_2$ is a homology 3-sphere. For any morphism $(f, \Lambda): V_1 \rightarrow V_2$ between simply connected smooth 4-manifolds $V_1, V_2$, there is a homeomorphism $F: V_1 \rightarrow V_2$ such that $(f, \Lambda) = (F|_{\partial V_1}, F_{\ast})$.\\ 

\end{corollary}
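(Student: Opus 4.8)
The plan is to deduce this statement directly from Theorem \ref{Theorem Bo} by checking that the hypothesis which annihilates Boyer's realization obstruction is automatically satisfied in the homology-sphere case. The entire content reduces to the observation that an integral homology $3$-sphere has vanishing first rational homology, at which point the ``moreover'' clause of Boyer's theorem applies verbatim.

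First I would unwind the hypothesis on the boundary. Saying that $\partial V_1 \cong \partial V_2$ is an (integral) homology $3$-sphere means $H_*(\partial V_1; \Z) \cong H_*(S^3; \Z)$, and in particular $H_1(\partial V_1; \Z) = 0$. Passing to rational coefficients via the universal coefficient theorem (or simply tensoring with $\Q$) then yields $H_1(\partial V_1; \Q) \cong H_1(\partial V_1; \Z) \otimes \Q = 0$. This is the single quantitative step, and it is immediate.

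Next I would feed the given morphism $(f, \Lambda)\colon V_1 \to V_2$ into Theorem \ref{Theorem Bo}. Since $V_1$ and $V_2$ are simply connected compact smooth $4$-manifolds with $\partial V_1 \cong \partial V_2$, the theorem supplies an obstruction $\theta(f, \Lambda) \in I^1(\partial V_1)$ whose vanishing is equivalent to geometric realizability of $(f,\Lambda)$. The condition $H_1(\partial V_1; \Q) = 0$ established above is exactly the hypothesis of the vanishing clause of that theorem, so it forces $\theta(f, \Lambda) = 0$. With the obstruction gone, the main assertion of Theorem \ref{Theorem Bo} guarantees that $(f,\Lambda)$ is geometrically realized; unwinding the definition of ``geometrically realized'' gives a homeomorphism $F\colon V_1 \to V_2$ with $f = F|_{\partial V_1}$ and $\Lambda = F_*$, i.e.\ $(f, \Lambda) = (F|_{\partial V_1}, F_*)$, as claimed.

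I do not expect a genuine obstacle here: the corollary is a clean specialization of Boyer's theorem, and the only thing to get right is matching the rational-homology vanishing of a homology sphere against the hypothesis of the vanishing criterion. The one point that deserves a sentence of care is that the morphism $(f,\Lambda)$ is given as input (so no existence question arises), and that the conclusion is about realizing \emph{that particular} morphism rather than producing an arbitrary homeomorphism; keeping the pair $(f,\Lambda)$ fixed throughout is all that is needed.
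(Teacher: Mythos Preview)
Your proposal is correct and matches the paper's intended argument: the corollary is stated without a separate proof, and the paper's use of it in the proof of Theorem~\ref{Theorem 13} makes explicit exactly the reasoning you give, namely that $H_1(\Sigma;\Q)=0$ for a homology sphere $\Sigma$ forces $\theta(f,\Lambda)=0$ via Theorem~\ref{Theorem Bo}, after which geometric realizability yields the desired homeomorphism.
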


We are now ready to prove our first result. The precise statement of Theorem \ref{Theorem 1} reads as follows.

\begin{theorem}{\label{Theorem 13}} Let $s\geq 1, n\in \N$, and let $G$ be a finitely presented group. For each of the following pairs of integers
\begin{center}
$(c, \chi) = (8n - 8 + 8(g + r), 2s + n - 1 + (g + r))$,
\end{center}
there exists a symplectic spin 4-manifold $X(G)$ with 
\begin{center}
$\pi_1(X(G)) = G$ and $(c_1^2(X(G)), \chi_h(X(G))) = (c, \chi)$.
\end{center}
If the group $G$ is residually finite, then $X(G)$ is irreducible. Moreover, there is an infinite set $\{X_m(G)): m\in \N\} $ of pairwise nondiffeomorphic symplectic and nonsymplectic manifolds homeomorphic to $X(G)$ for every $m\in \N$.\\
\end{theorem}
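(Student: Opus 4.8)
The plan is to reduce to the simply connected spin geography settled by B.~D.~Park and Z.~Szab\'o and then to install the fundamental group $G$ with the block $BK$ of Section \ref{Section 2.2}, following the template of the proof of Theorem \ref{Theorem 14}. Note first that the stated coordinates force $\sigma = c - 8\chi = -16s$, which is divisible by $16$ and hence consistent with the spin condition, and that for the relevant values one has $b^+ > 1$, so that Seiberg--Witten invariants will be available. First I would invoke \cite{[PS]} (see also \cite{[JP1], [FPS]}) to obtain a simply connected spin symplectic $4$-manifold $P$ with $(c_1^2(P), \chi_h(P)) = (8n - 8, 2s + n - 1)$, equivalently $(c_2(P), \sigma(P)) = (24s + 4n - 4, -16s)$, recording that the underlying elliptic/fiber-sum constructions supply a symplectic torus $T \subset P$ of self intersection zero whose complement is simply connected. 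I would then form the symplectic sum
\[
X(G) := BK \#_{T_0 = T} P
\]
along the torus $T_0 \subset BK$ of Section \ref{Section 2.2} and $T$; being a Gompf symplectic sum of symplectic manifolds along a symplectic torus, $X(G)$ is symplectic.

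The fundamental group, characteristic numbers, and spin condition are then checked as follows. By the computation of Section \ref{Section 2.3}, which follows \cite{[Y]} verbatim, the summing kills the generators $s, t$ of $\pi_1(BK)$ and yields $\pi_1(X(G)) \cong G$. Since the gluing is along a torus of self intersection zero, both $c_2$ and $\sigma$ are additive \cite[p.\ 535]{[Go]}, so $(c_2, \sigma)(X(G)) = (24s + 4n - 4 + 4(g + r),\, -16s)$; converting gives $(c_1^2, \chi_h)(X(G)) = (8n - 8 + 8(g + r),\, 2s + n - 1 + (g + r)) = (c, \chi)$, as required. Both $P$ and $BK$ are spin (the latter because $Y \times S^1$ is parallelizable and the block $V$ of \cite[Section 4]{[FPS]} is spin), and their spin structures restrict to spin structures on the gluing $T^3$; choosing the gluing diffeomorphism so as to match these restrictions keeps $X(G)$ spin, exactly as in \cite{[PS], [Go]}. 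Finally, if $G$ is residually finite, irreducibility of $X(G)$ follows from \cite{[HKo]}, precisely as in Theorem \ref{Theorem 14}.

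For the infinite family I would fix a homologically essential, self intersection zero torus $T' \subset P \subset X(G)$ sitting in a cusp neighborhood and disjoint from the summing region, and perform Fintushel--Stern knot surgery \cite{[FS]} along $T'$ with a sequence of knots $K_m$ of pairwise distinct Alexander polynomials. Because the meridian of $T'$ dies in the complement, knot surgery leaves $\pi_1$, the intersection form, and the spin type unchanged, so every $X_m(G)$ realizes the same homeomorphism invariants; on the other hand the relative Seiberg--Witten formula multiplies the invariant by $\Delta_{K_m}(t^2)$, and tracking basic classes via \cite{[Ta], [MMS]} shows the $X_m(G)$ to be pairwise nondiffeomorphic. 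Choosing the $K_m$ alternately fibered and non-fibered produces both symplectic members and, by Taubes's constraint on the Seiberg--Witten invariants of symplectic manifolds with $b^+ > 1$, nonsymplectic members.

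The remaining, and hardest, point is to upgrade ``same homeomorphism invariants'' to an actual homeomorphism $X_m(G) \cong X(G)$ for arbitrary $G$, where no general classification is available; here I would follow J.~Park \cite{[JP]}. The idea is to split $X(G)$ along a homology $3$-sphere $\Sigma$ into a simply connected spin piece $C$ containing $T'$ and a complementary piece $D$ carrying all of $\pi_1 = G$, so that the knot surgeries alter only $C$ while fixing $\partial C = \Sigma$, the intersection form of $C$, and $w_2$. Corollary \ref{Corollary Bo} then furnishes a homeomorphism $C \to C_m$ extending a chosen boundary homeomorphism of $\Sigma$, and gluing it to the identity on $D$ yields the desired homeomorphism. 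The main obstacle is precisely the construction of such a homology-sphere splitting in which the surgered torus is confined to the simply connected side and $G$ is confined to the complementary side; once $\partial = \Sigma$ is a homology sphere, Boyer's obstruction $\theta$ vanishes automatically by Theorem \ref{Theorem Bo}, so it is the existence and control of this decomposition where the argument must be carried out with care.
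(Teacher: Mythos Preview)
Your proposal is correct and follows essentially the same approach as the paper's proof. The homology-sphere decomposition you flag as the ``main obstacle'' is handled in the paper exactly as you anticipate: since the Park--Szab\'o manifolds $Z$ are built from elliptic surfaces $E(n)$, they contain Gompf nuclei $C_n$ (with Brieskorn-sphere boundary $\Sigma$) that persist after knot surgery, and one takes the simply connected piece to be $Z_{K_i}^\circ := Z_{K_i} \setminus C_n$ and the piece carrying $G$ to be $BK \#_T C_n$, after which Corollary~\ref{Corollary Bo} applies directly.
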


The characteristic numbers of the examples of Theorem \ref{Theorem 13} can be stated as  $c_2(X(G)) = 4(n - 4 + 6 s + (g + r))$, and $\sigma(X(G)) = -16 s$.\\

The following proof is a modification to J. Park's work done in \cite{[JP]}.\\

\begin{proof} Let $Z$ be an irreducible simply connected symplectic spin 4-manifold of \cite{[PS]} (see also \cite{[JP1]}.) B. D. Park and Z. Szab\'o constructed $Z$ using simply connected minimal elliptic surfaces without multiple fibers $E(n)$, which are known to contain Gompf nuclei $C_n \subset E(n)$ \cite{[GN]}.  In particular, there exists a symplectic torus $T\hookrightarrow Z$ of self intersection zero, with $\pi_1(Z - T) \cong \{1\}$. Build the symplectic sum \cite[Theorem 1.3]{[Go]}
\begin{center}
$X(G):= Z \#_{T} BK$,
\end{center}

where $BK$ is the manifold described in Section \ref{Section 2.2}. By \cite[Proposition 1.2]{[Go]}, the manifolds $X(G)$ are spin. The computations of the characteristic numbers are straight-forward, since $c_1(T^2) = 0$, $c_2(BK) = 4(g+r)$, and $\sigma(BK) = 0$. The claim $\pi_1(X(G)) \cong G$ follows from the computations in Section \ref{Section 2.3}. If the group $G$ is residually finite, then $M(G)$ is irreducible by \cite{[HKo]}. The existence claim follows from a variation of the choice of symplectic manifold $Z$ one takes from \cite{[PS]} in the construction of the symplectic sum $X(G)$. 

From the existence of Gompf nuclei inside the building block $Z$, one concludes that there exists a symplectic torus $T' \hookrightarrow X(G)$ such that the inclusion induces the trivial map $\pi_1(T') \rightarrow \pi_1(X(G) - T') \cong G$. The infinite set of pairwise nondiffeomorphic symplectic and nonsymplectic 4-manifolds $\{X_m(G) : m \in \N\}$ is constructed using R. Fintushel - R. Stern's knot surgery \cite{[FS]},  varying the choice of knot.

To prove the claim on the homeomorphism type of the manifolds constructed, we use an argument due to J. Park \cite[Proposition 2.2]{[JP]}. Let the 4-manifold with arbitrary fundamental group $X_i(G):= Z_{K_i} \#_T BK \in \{X_m(G): m \in \N\}$ be the result of applying knot surgery with knot $K_i$. We claim that there exists a homeomorphism between the manifolds $X_i(G) = Z_{K_i} \#_T BK$ and $X_j(G) = Z_{K_j} \#_T BK$ for any $i, j \in \N$. In this notation, we are specifying that the surgery occurs in the $Z$ block of the construction, and calling $Z_{K_i}$ the homeomorphic simply connected manifolds produced using knot surgery with knot $K_i$. The spin manifold $Z_{K_i}$ contains a Gompf nucleus as a codimensional zero submanifold, independent of the knot $K_i$. Let $C_n$ be the Gompf nucleus, let $\Sigma$ be a homology 3-sphere,  and set $Z_{K_i}^{\circ}:= Z_{K_i} - C_n$. The symplectic sum  $Z_{K_i} \#_T BK$ can be decomposed as
\begin{center}
$Z_{K_i}^{\circ}\cup_{\Sigma} (BK\#_T C_n)  $.
\end{center}

The endeavor at hand is to construct a morphism $(f, \Lambda)$, and then use Boyer's results to show that it is geometrically realized. The choice of homeomorphism $f$ is canonical by setting 
$g:= id: BK \#_T C_n \rightarrow BK \#_T C_n$, so that its restriction yields $g|_{\partial(BK \#_T C_n)} = f: \Sigma \rightarrow \Sigma$. Take an isomorphism $\Lambda: H_2(Z_{K_i}^{\circ}; \Z)\rightarrow H_2(Z_{K_j}^{\circ}; \Z)$. Minding the intersection forms, $Q_{Z_{K_i}^{\circ}}$ and $Q_{Z_{K_i}^{\circ}}$ are unimodular, indefinite forms over $\Z$ that have the same rank, same signature, and same type. By the classification of unimodular indefinite integral forms,  $Q_{Z_{K_i}^{\circ}} \cong Q_{Z_{K_i}^{\circ}}$. We have that $\Lambda:(H_2(Z_{K_i}^{\circ}; \Z), Q_{Z_{K_i}^{\circ}}) \rightarrow (H_2(Z_{K_j}^{\circ}; \Z), Q_{Z_{K_j}^{\circ}})$ is an isomorphism that preserves intersection form, i.e., an isometry. Therefore, we have a required morphism $(f, \Lambda): Z_{K_i}^{\circ}\rightarrow Z_{K_j}^{\circ}$.

Since $H_1(\partial(Z_{K_i}^{\circ}); \Q) = H_1(\partial(Z_{K_i}^{\circ}); \Q) = H_1(\Sigma; \Q) = 0$, then $\theta(f, \Lambda) = 0$ by Theorem \ref{Theorem Bo}. It follows from Corollary \ref{Corollary Bo} that there exists a homeomorphism $F: Z_{K_i}^{\circ} \rightarrow Z_{K_j}^{\circ}$ such that $(f, \Lambda) = (F|_{\Sigma}, F_{\ast})$. Thus, for $i, j \in \N$ any two manifolds $X_i(G) = Z_{K_i}\#_T BK$ and $X_j(G) = Z_{K_j} \#_T BK$ are homeomorphic.
\end{proof}

\begin{remark}{\label{Remark 15}} \emph{On the geography for other choices of $\pi_1$ and other regions.} The techniques used in this paper yield similar results to Theorem \ref{Theorem 1} for several choices of fundamental groups. For example, surface groups,  3-manifolds groups, knot group of (p, q)-torus knots, the 3-dimensional Heisenberg group, the infinite dihedral group, $PSL(2, \Z)$, free groups of rank $n$, amongst others cf. \cite{[BK1]}, and Example \ref{Example E} below.
Moreover, other regions can be populated using similar methods cf. \cite{[BK2]}\\
\end{remark}

\section{Geography of symplectic 6-manifolds.}{\label{Section 4}}

\subsection{Computing Chern numbers.}{\label{Section 4.1}}

Minding the 6-manifolds arising as products of 4- and 2-manifolds, we have the following lemma.

\begin{lemma}{\label{Lemma P}} Let $Y$ be a symplectic 4-manifold, and $\Sigma_g$ a symplectic surface of genus $g$. Let $e:=e(Y)$ be the Euler characteristic and $\sigma:= \sigma(Y)$ the signature.The Chern numbers of the product symplectic 6-manifold $X:= Y\times \Sigma_g$ are given as follows.
\begin{itemize}
\item $c_1^3(X) = (2e + 3\sigma) \cdot (6 - 6g)$,
\item $c_1 c_2(X) =  (e + \sigma) \cdot (6 - 6g)$, and
\item $c_3(X) = e \cdot (2 - 2g)$.
\end{itemize}
\end{lemma}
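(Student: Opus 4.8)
The plan is to compute the total Chern class of $X$ from the product structure of its tangent bundle. Equip $X = Y \times \Sigma_g$ with the product symplectic form $p^*\omega_Y + q^*\omega_{\Sigma_g}$, where $p \colon X \to Y$ and $q \colon X \to \Sigma_g$ are the two projections, and with the compatible product almost complex structure. As a complex vector bundle one then has $TX \cong p^*TY \oplus q^*T\Sigma_g$, so by the Whitney sum formula the total Chern class factors as $c(TX) = p^*c(TY) \smile q^*c(T\Sigma_g)$. Writing $c_1, c_2$ for $c_1(Y), c_2(Y)$ and $h$ for $c_1(\Sigma_g)$, and suppressing pullbacks, expanding $(1 + c_1 + c_2)(1 + h)$ gives
\[
c_1(X) = c_1 + h, \qquad c_2(X) = c_2 + c_1 h, \qquad c_3(X) = c_2\, h.
\]

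Before evaluating, I would record the two standard numerical inputs. Since $Y$ carries an almost complex structure, its top Chern class is the Euler class, so $c_2(Y)[Y] = e$; combining $p_1(Y) = c_1^2 - 2c_2$ with the Hirzebruch signature theorem $p_1(Y)[Y] = 3\sigma$ yields $c_1^2(Y)[Y] = 2e + 3\sigma$. For the genus $g$ surface, $c_1(\Sigma_g)[\Sigma_g] = e(\Sigma_g) = 2 - 2g$.

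The three Chern numbers then follow by expanding the appropriate power or product of the classes above, discarding every monomial that vanishes for dimensional reasons, and pairing the survivor against the fundamental class $[X] = [Y] \times [\Sigma_g]$ via the K\"unneth isomorphism. Two vanishing principles do all the work: any monomial lying in $p^*H^6(Y)$ vanishes because $Y$ is $4$-dimensional (this kills $c_1^3$ and $c_1 c_2$ as classes on $Y$), and any monomial containing $h^2$ vanishes because $h^2 \in q^*H^4(\Sigma_g) = 0$. For $c_3$ one gets $c_2 h$ directly, pairing to $(c_2[Y])(h[\Sigma_g]) = e(2-2g)$. For $c_1 c_2 = (c_1+h)(c_2 + c_1 h)$ only $(c_1^2 + c_2)h$ survives, pairing to $(2e + 3\sigma + e)(2-2g) = (e+\sigma)(6-6g)$. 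For $c_1^3 = (c_1 + h)^3$ only the middle term $3c_1^2 h$ survives, pairing to $3(2e+3\sigma)(2-2g) = (2e+3\sigma)(6-6g)$.

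There is no serious obstacle here; the content is entirely the bookkeeping of which cross terms survive together with the correct numerical coefficients. The only points that genuinely require care are, first, checking that the product almost complex structure is compatible with the product symplectic form so that the Chern classes of $X$ really are those of this honest complex bundle, and second, retaining the coefficient $3$ in the expansion of $c_1^3$ so as to reproduce the factor $6 - 6g = 3(2-2g)$ rather than $2 - 2g$.
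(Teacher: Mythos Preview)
Your proof is correct and follows essentially the same approach as the paper: both use the Whitney product formula to write $c(X) = \pi_1^*c(Y)\cdot \pi_2^*c(\Sigma_g)$ and then integrate over $X = Y \times \Sigma_g$. The paper's proof is terse and leaves the expansion and evaluation to the reader, whereas you have carried out those details explicitly and accurately.
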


\begin{proof} We need to identify the Chern classes. For such a purpose, consider the projections
\begin{center}
$\pi_1: X = Y\times \Sigma_g \rightarrow Y$,
$\pi_2: X = Y\times \Sigma_g \rightarrow \Sigma_g$.
\end{center}

The total Chern class can be written as $c(X) = \pi_1^*(c(Y))\cdot \pi_2^*( c(\Sigma_g))$. The lemma follows by integrating over $X = Y\times \Sigma_g$.\\
\end{proof}

The change on the Chern numbers of 6-manifolds after blow ups at points and along surfaces are given by the following formulas. 

\begin{lemma}{\label{Lemma B}} Let $X$ be a symplectic 6-manifold. Let $X'$ be the symplectic 6-manifold obtained by blowing up a point in $X$. The Chern numbers of $X'$ are as follows: \begin{itemize}
\item $c_1^3(X') = c_1^3(X) - 8$,
\item $c_1c_2(X') = c_1c_2(X)$,
\item $c_3(X') = c_3(X) + 2$.
\end{itemize}

Let $\Sigma_g \hookrightarrow X$ be a symplectic surface of genus $g$, and denote by $N(\Sigma_g)$ its normal bundle. Let $X'_g$ be the symplectic 6-manifold obtained by blowing up $X$ along $\Sigma_g$. The Chern numbers of $X'_g$ are as follows:
\begin{itemize}
\item $c_1^3(X'_g) = c_1^3(X) + 6(g - 1) - 2 \left<c_1(N(\Sigma_g)), [\Sigma_g]\right>$,
\item $c_1c_2(X'_g) = c_1c_2(X)$,
\item $c_3(X'_g) = c_3(X) - 2(g - 1)$.
\end{itemize}

\end{lemma}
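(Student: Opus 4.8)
The plan is to compute the total Chern class of each blow-up using standard projective-bundle and normal-bundle formulas, then extract the three Chern numbers by capping against the fundamental class. For the point blow-up, I would model $X'$ as the blow-up of $X$ at a point, whose exceptional divisor $E$ is a copy of $\mathbb{CP}^2$ with normal bundle $\mathcal{O}(-1)$. The cohomology of $X'$ is $H^*(X)\oplus \Z\langle E\rangle$ with the self-intersection relations governed by $E|_E = \mathcal{O}_{\mathbb{CP}^2}(-1)$; concretely $e^3 = \langle E^3,[X']\rangle$ and the mixed products $E\cdot \pi^*(\cdot)$ vanish. The first Chern class transforms as $c_1(X') = \pi^*c_1(X) - 2E$ (from the discrepancy formula for a point blow-up in dimension three, where the exceptional divisor appears with multiplicity equal to $\operatorname{codim}-1 = 2$). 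I would then substitute this into $c_1^3$, $c_1c_2$, $c_3$ and track the corrections using the known values $e^3 = 1$ (up to sign conventions) and the multiplicativity $c(X') = \pi^*c(X)\cdot c(\text{correction from }E)$.

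For the surface blow-up, the relevant model is that $X'_g$ replaces a tubular neighborhood of $\Sigma_g$ (with normal bundle $N(\Sigma_g)$, a rank-two bundle since $X$ is six-dimensional) by the projectivization $\mathbb{P}(N(\Sigma_g))$, so the exceptional divisor $E$ is a $\mathbb{CP}^1$-bundle over $\Sigma_g$. Here I would invoke the general blow-up formula for Chern classes along a smooth center: $c(TX') = \pi^*c(TX)\cdot(\text{a correction term supported on }E)$, together with the self-intersection data encoded by the restriction $E|_E = \mathcal{O}_{\mathbb{P}(N)}(-1)$ and the Grothendieck relation for the projective bundle. The first Chern class now transforms as $c_1(X'_g) = \pi^*c_1(X) - E$ (codimension two, so discrepancy coefficient $\operatorname{codim}-1 = 1$). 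The key intersection numbers I would need are $\langle E^3,[X'_g]\rangle$ and $\langle E^2\cdot \pi^*c_1(X),[X'_g]\rangle$, both of which reduce via the projection formula to computations on $\Sigma_g$ involving $\langle c_1(N(\Sigma_g)),[\Sigma_g]\rangle$ and the adjunction relation $\langle c_1(X)|_{\Sigma_g},[\Sigma_g]\rangle = 2-2g + \langle c_1(N(\Sigma_g)),[\Sigma_g]\rangle$.

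Having assembled these, I would expand $c_1^3(X'_g) = (\pi^*c_1(X) - E)^3$, $c_1c_2(X'_g)$, and $c_3(X'_g)$, using the projection formula $\langle \pi^*\alpha\cdot\beta,[X'_g]\rangle = \langle \alpha\cdot\pi_*\beta,[X]\rangle$ to push everything down. The mixed terms $\pi^*c_1(X)^2\cdot E$ push down to zero (since $\pi_*$ of a class supported on a codimension-one $E$ lands in lower degree), while $\pi^*c_1(X)\cdot E^2$ and $E^3$ contribute the $g$-dependent corrections. The expected outcome is that $c_1c_2$ is unchanged in both cases (a manifestation of the birational invariance of the holomorphic Euler characteristic / the Todd genus degree-four piece), $c_3$ changes by the difference of topological Euler characteristics of the exceptional divisor and the center, and $c_1^3$ carries the full normal-bundle correction.

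The main obstacle I anticipate is pinning down the precise self-intersection numbers $\langle E^3,[X'_g]\rangle$ and the correct sign and coefficient in the Grothendieck relation, since these depend on orientation conventions and on whether one uses $\mathbb{P}(N)$ of lines or of hyperplanes; a sign error here propagates directly into the $\langle c_1(N(\Sigma_g)),[\Sigma_g]\rangle$ coefficient of $c_1^3(X'_g)$. I would resolve this by cross-checking against the point blow-up (the $g=0$ case with trivial normal bundle degree, viewed appropriately) and against the requirement that $c_1c_2$ be invariant, which serves as a strong internal consistency check on all the intersection-number bookkeeping.
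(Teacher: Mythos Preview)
Your plan is a correct and standard route to these formulas: pull back the total Chern class, insert the discrepancy $c_1(X') = \pi^*c_1(X) - (\mathrm{codim}-1)E$, and reduce all exceptional-divisor intersections via the Grothendieck relation on $\mathbb{P}(N)$ and the projection formula. The caveats you flag (sign of $E^3$, lines versus hyperplanes in $\mathbb{P}(N)$) are exactly the places where errors creep in, and your proposed consistency check via invariance of $c_1c_2$ is a good one.

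By way of comparison: the paper does not actually supply its own argument for this lemma. It simply records the formulas and refers the reader to \cite[Lemma 2.1]{[MH]} for a proof. So your proposal is considerably more substantive than what appears here; you are effectively reconstructing (or at least outlining) the computation that Halic carries out in the cited reference. The upshot is that there is nothing to compare at the level of method, since the paper treats the result as a black box imported from the literature, whereas you are sketching how to open that box.
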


See \cite[Lemma 2.1]{[MH]} for a proof.

Regarding the 6-manifolds obtained as symplectic sums, we have the following lemma.

\begin{lemma}{\label{Lemma S}} Let $X_1, X_2$ be symplectic 6-manifolds that contain 4-dimensional symplectic submanifolds $Y_i \subset X_i$ for $i = 1, 2$ with trivial normal bundle such that $Y_1 \cong Y_2$. Set $Y:= Y_i$. The Chern numbers of the symplectic sum $X:= X_1 \#_{Y} X_2$ are
\begin{center}
$c_1^3(X) = c_1^3(X_1) + c_1^3(X_2) - 6 c_1^2(Y)$,\\
$c_1c_2(X) = c_1c_2(X_1) + c_1c_2(X_2) - 2(c_1^2(Y) + c_2(Y))$, and\\
$c_3(X) = c_3(X_1) + c_3(X_2) - 2 c_2(Y)$.\\

\end{center}
\end{lemma}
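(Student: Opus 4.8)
The plan is to compute the Chern classes of the symplectic sum $X = X_1 \#_Y X_2$ by understanding how the tangent bundle of $X$ relates to those of $X_1$ and $X_2$ along the gluing region, and then to integrate the appropriate characteristic classes to extract the three Chern numbers. The cleanest route is to exploit the additivity of characteristic numbers under the ``cut along the hypersurface bounding a neighborhood of $Y$ and reglue'' picture. Concretely, write $X_i^\circ := X_i \setminus \nu(Y)$ for the complement of an open tubular neighborhood, so $X = X_1^\circ \cup_{\partial} X_2^\circ$ where the common boundary is the circle bundle $\partial\nu(Y)$. Since $Y$ has trivial normal bundle, $\partial\nu(Y) \cong Y \times S^1$, and the tangent bundle of $X$ restricted to each piece agrees with $TX_i$ restricted to $X_i^\circ$. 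The first step is therefore to express $\langle c_1^3(X), [X]\rangle$, $\langle c_1c_2(X),[X]\rangle$, $\langle c_3(X),[X]\rangle$ as sums of contributions from $X_1^\circ$ and $X_2^\circ$, plus correction terms supported on the gluing region that account for the portion of $X_i$ that was removed, namely $\nu(Y) \cong Y \times D^2$.

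The key computation is the correction term. First I would observe that for each summand, the Chern number of the closed manifold $X_i$ decomposes as the contribution from $X_i^\circ$ plus the contribution from the deleted neighborhood $\nu(Y)$. When we reglue to form $X$, the two copies of the neighborhood are replaced by a single gluing region, so each Chern number of $X$ equals $\langle c_k(X_1),[X_1]\rangle + \langle c_k(X_2),[X_2]\rangle$ minus the characteristic number of the ``doubled'' neighborhood that is now counted only once. Since $\nu(Y) \cong Y \times D^2$ with $Y$ a codimension-two symplectic submanifold, the relevant tangent splitting is $TX_i|_Y \cong TY \oplus N(Y) = TY \oplus \underline{\mathbb{C}}$ (trivial normal bundle), so $c(X_i)|_Y = c(Y)$. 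I would then reduce each correction to an integral over $Y$ of a polynomial in the Chern classes of the four-dimensional piece $Y$, using $c_1(Y), c_2(Y)$ and the identity $\langle c_1^2(Y),[Y]\rangle = 2e(Y)+3\sigma(Y)$ implicitly — though here it is cleaner to keep answers in terms of $c_1^2(Y)$ and $c_2(Y)=e(Y)$ directly, which is exactly the form the statement takes.

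Carrying this out, the correction to $c_3$ comes from the two-dimensional fiber direction: the Euler class of the normal $D^2$-bundle pairs against $[Y]$ to produce a factor involving $c_2(Y)$, yielding the term $-2c_2(Y)$. The correction to $c_1c_2$ must combine a normal-direction factor with the $c_1$ and $c_2$ data of $Y$, producing $-2(c_1^2(Y)+c_2(Y))$, while the $c_1^3$ correction is purely tangential along $Y$ and yields $-6c_1^2(Y)$. The most reliable way to pin down the precise coefficients $6$, $2$, and $2$ is to calibrate against a model case: take $X_i = Y \times \Sigma$ for surfaces $\Sigma$ and apply Lemma \ref{Lemma P} to both summands and to the glued manifold $(Y\times\Sigma_1)\#_Y(Y\times\Sigma_2) = Y \times (\Sigma_1\#_{pt}\Sigma_2)$, comparing both sides; the additivity of genus under the fiber sum of the base surfaces fixes all three constants unambiguously.

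The main obstacle will be the bookkeeping in the gluing region: one must be careful that the normal bundle of $Y$ in $X_1$ and in $X_2$ are glued by an orientation-reversing diffeomorphism of $\partial\nu(Y)$ (this is what makes the symplectic sum symplectic and closed), so the two copies of $c_1(N(Y))$ enter with opposite signs and the net normal contribution to $c_1(X)$ vanishes along $Y$ — which is consistent with the stated formulas having no $\langle c_1(N(Y)),[Y]\rangle$ term, precisely because the normal bundle here is trivial and the gluing cancels the boundary contributions. Once this sign and the triviality of $N(Y)$ are handled correctly, the three formulas follow directly; the model-case calibration of the paragraph above then serves as an independent check that no combinatorial factor has been dropped.
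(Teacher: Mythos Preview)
The paper does not actually prove this lemma; it simply refers the reader to \cite[Proposition 1.3]{[MH]} (and \cite[Lemma 3.23]{[BP]}). So any correct argument you give is already ``more'' than what the paper provides.

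Your overall strategy is sound and is essentially the standard one: the correction term depends only on the deleted tubular neighborhoods, and because the normal bundle is trivial these are copies of $Y\times D^2$, so the correction is a universal expression in $c_1^2(Y)$ and $c_2(Y)$. The calibration against $X_i=Y\times\Sigma_{g_i}$ (using Lemma~\ref{Lemma P}) then pins down the constants, and your check goes through.

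One point of exposition deserves tightening. You write that the correction is ``the characteristic number of the doubled neighborhood that is now counted only once.'' This is not quite the right picture: in the symplectic sum \emph{both} neighborhoods are removed, not one. The clean formulation is that, up to (complex) cobordism,
\[
X \ \sqcup\ (Y\times S^2)\ \sim\ X_1 \ \sqcup\ X_2,
\]
since gluing the two removed copies of $Y\times D^2$ to each other along $Y\times S^1$ produces $Y\times S^2$. Chern numbers are cobordism invariants, so the correction for each Chern number is exactly the corresponding Chern number of $Y\times S^2$. Applying Lemma~\ref{Lemma P} with $g=0$ gives
\[
c_1^3(Y\times S^2)=6\,c_1^2(Y),\qquad
c_1c_2(Y\times S^2)=6(e+\sigma)=2\bigl(c_1^2(Y)+c_2(Y)\bigr),\qquad
c_3(Y\times S^2)=2\,c_2(Y),
\]
which are precisely the stated corrections. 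Framed this way you get the coefficients directly, and the model-case computation becomes a consistency check rather than the mechanism that determines them.
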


See \cite[Proposition 1.3]{[MH]} for a proof (cf. \cite[Lemma 3.23]{[BP]}).\\

The existence of a a symplectic structure induces an almost-complex structure on the underlying manifold. The possible restrictions on the Chern numbers of almost-complex 6-manifolds are computed in \cite[Proposition 9]{[OV]}, \cite{[HG]}, and see \cite{[CL]} for interesting 6-dimensional phenomena.

\subsection{6-dimensional symplectic sums}{\label{Section 4.2}} We use our 4-dimensional building blocks to construct symplectic 6-manifolds using Gompf's symplectic sum construction \cite[Theorem 1.3]{[Go]} (see also \cite{[MT]}) as our main manufacture tool. In our notation,  $X \#_{Z} Y$ denotes the symplectic sum of $X$ and $Y$ along the submanifold $Z$. 


\begin{theorem}{\label{Theorem 19}} (cf. Theorem \ref{Theorem 2}). Let $G \in \{\Z_p, \Z_p \oplus \Z_q, \Z\oplus \Z_q, \Z, \pi_1(\Sigma_g), F_n\}$, where $\Sigma_g$ is a closed oriented surface and $F_n$ is the free group of rank $n$. Let $e_i, \sigma_i$ $i =1, 2$ be integers that satisfy $2e_i + 3 \sigma_i \geq 0, e_i + \sigma_i \equiv 0$ mod 4, and $e_i + \sigma_i \geq 8$. There exist symplectic 6-manifolds $W_0(G), W_1(G), W_2(G)$, with fundamental group $\pi_1(W_i(G)) \cong G$ for $i = 0, 1, 2$,  and the following Chern numbers:
\begin{enumerate}

\item 
$c_1^3(W_0(G)) = 6 \cdot (3\sigma_1 + 2 e_1 + 3\sigma_2 + 2 e_2)$,\\
$c_1c_2(W_0(G)) = 6 \cdot (e_1 + \sigma_1 + e_2 + \sigma_2)$, and\\
$c_3(W_0) = 2\cdot (e_1 + e_2)$;\\

\item 
$c_1^3(W_1(G)) = 0$,\\
$c_1c_2(W_1(G)) = 0$, and\\
$c_3(W_1(G)) = 0$;\\

\item 
$c_1^3(W_2(G)) =  - 6 \cdot (3\sigma_1 + 2 e_1 + 3\sigma_2 + 2 e_2) - 48$,\\
$c_1c_2(W_2(G)) =  - 6 \cdot (e_1 + \sigma_1 + e_2 + \sigma_2) - 24$, and\\
$c_3(W_2(G)) =  - 2\cdot (e_1 + e_2) - 8$.\\
\end{enumerate}

Assume $s_1, s_2\geq 1$, and $n_1, n_2\in \N$. There exist spin symplectic 6-manifolds $Y_i(G)$ for $i = 0, 1, 2$ with fundamental group $\pi_1(Y_i(G)) \cong G$, and whose Chern numbers are\\
\begin{enumerate}
\item 
$c_1^3(Y_0(G)) = 48 \cdot (n_1 + n_2 - 2)$,\\
$c_1c_2(Y_0(G)) = 48 \cdot (s_1 + s_2) + 24\cdot (n_1 + n_2 - 2)$, and\\
$c_3(Y_0(G)) = 48\cdot (s_1 + s_2) + 8 \cdot (n_1 + n_2 - 2)$;\\

\item 
$c_1^3(Y_1(G)) = 0$,\\
$c_1c_2(Y_1(G)) = 0$, and\\
$c_3(Y_1(G)) = 0$;\\

\item 
$c_1^3(Y_2(G)) =  - 48 \cdot (n_1 + n_2 - 1)$,\\
$c_1c_2(Y_2(G)) =  - 48 \cdot (s_1 + s_2) - 24 \cdot (n_1 + n_2 - 1)$, and\\
$c_3(Y_2(G)) =  - 48\cdot (s_1 + s_2) - 8(n_1 + n_2 - 1)$.\\
\end{enumerate}

\end{theorem}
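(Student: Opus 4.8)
The plan is to realize each of the six families of 6-manifolds as symplectic sums of products of our 4-dimensional building blocks with symplectic surfaces, and then to read off the Chern numbers directly from Lemma \ref{Lemma P} and Lemma \ref{Lemma S}. The key observation is that the Chern-number formulae in Lemma \ref{Lemma P}, evaluated on a product $Y \times \Sigma_g$, are exactly of the shape appearing in the target triples: for $g = 0$ (i.e. $\Sigma_0 = S^2$) the factor $(6 - 6g) = 6$ produces $c_1^3 = 6(2e + 3\sigma)$ and $c_1 c_2 = 6(e + \sigma)$, while $c_3 = 2e$, which matches the $W_0$ family term-by-term once one takes $Y$ to be a minimal symplectic 4-manifold with $(e_i, \sigma_i)$ realized by Proposition \ref{Proposition 11}. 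Thus I would first build, for each index $i = 1, 2$, a simply connected minimal symplectic 4-manifold $Z_{1,2}^{(i)}$ with Euler characteristic $e_i$ and signature $\sigma_i$ containing a symplectic genus-two surface $F_i$ with trivial normal bundle and $\pi_1(Z_{1,2}^{(i)} - F_i) \cong \{1\}$ (Proposition \ref{Proposition 11}).

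Next I would form the product 6-manifolds $X_i := Z_{1,2}^{(i)} \times S^2$; each contains the 4-dimensional symplectic submanifold $Y_i := F_i \times S^2$, which is a product of two surfaces and hence has $c_1^2(Y_i) = c_2(Y_i) = 0$ computable from $Y_i \cong \Sigma_2 \times S^2$. The fundamental-group prescription is handled separately: to install the group $G$ from the list $\{\Z_p, \Z_p \oplus \Z_q, \Z \oplus \Z_q, \Z, \pi_1(\Sigma_g), F_n\}$ one incorporates, via Luttinger surgeries on the Lagrangian tori of Proposition \ref{Proposition 5} inside a $T^2 \times \Sigma_2$ summand (which contributes nothing to the characteristic numbers, exactly as advertised in Section \ref{Section 2.1}), the abelian or surface or free relations. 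Because $T^2 \times \Sigma_2$ has $c_2 = \sigma = 0$ and the genus-two gluing surface has vanishing $c_1^2$ and $c_2$, these surgeries and sums leave the Chern numbers untouched while realizing $\pi_1 \cong G$; the computation that $s = t = 1$ after the gluing and that the surgered meridians kill the appropriate classes is the content of Section \ref{Section 2.3} and Proposition \ref{Proposition 5}. The manifold $W_0(G)$ is then the symplectic sum $X_1 \#_{Y} X_2$ along the common submanifold $Y \cong Y_1 \cong Y_2$; since $c_1^2(Y) = c_2(Y) = 0$, Lemma \ref{Lemma S} shows the three Chern numbers simply add, giving precisely the displayed $W_0$ values.

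For the $W_2$ family I would repeat the construction with the orientation-reversed (or conjugate almost-complex) products, producing the negated Chern numbers, and then perform blow-ups — four point blow-ups contribute $-8 \cdot 4 = -32$ and $+2 \cdot 4 = +8$, adjusted together with a surface blow-up to match the $-48$, $-24$, $-8$ correction terms via Lemma \ref{Lemma B}; I would choose the blow-up data (number of points, genus and normal degree of the blown-up surface) so the three corrections land exactly on $(-48, -24, -8)$. The $W_1$ family with vanishing Chern numbers is obtained by taking $Y \times T^2$ with $g = 1$, since the factor $(6 - 6g) = 0$ annihilates $c_1^3$ and $c_1 c_2$ by Lemma \ref{Lemma P}, and choosing $Y$ with $c_3$-contribution zero (e.g. using the zero-signature, zero-Euler-characteristic building block $T^2 \times \Sigma_2$ or $N$). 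The spin families $Y_i(G)$ are handled identically but starting from the spin 4-manifolds $X(G)$ of Theorem \ref{Theorem 13} (with $c_2 = 4(n - 4 + 6s + (g+r))$, $\sigma = -16s$); substituting $2e + 3\sigma = 2c_2 + 3\sigma$ and $e + \sigma$ for these spin values into the Lemma \ref{Lemma P} formulae and then summing via Lemma \ref{Lemma S} yields the $48(n_1 + n_2 - 2)$-type expressions, and Proposition 1.2 of \cite{[Go]} guarantees the products and sums remain spin.

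The main obstacle I expect is not the Chern-number bookkeeping — which is forced once the building blocks are fixed — but rather the simultaneous control of the fundamental group through the symplectic sums: one must verify that gluing two products $X_1 \#_Y X_2$ along the four-dimensional piece $Y$ does not introduce extra generators or relations, i.e. that the van Kampen computation for a codimension-two gluing along $F \times S^2$ behaves as the 4-dimensional torus-sum computations of Section \ref{Section 2.3} do, and that the meridian $\mu_Y$ (the normal $S^1$ to $Y$ in each $X_i$) is nullhomotopic in the complement so that it contributes no new relation. This requires knowing $\pi_1(X_i - Y) \cong G$ and that the gluing circle dies, which follows from $\pi_1(Z_{1,2}^{(i)} - F_i) \cong \{1\}$ together with the product structure, but the detailed verification — deferred in the paper to Section \ref{Section 5.2} and the gluing-map choices of Section \ref{Section 5.1} — is where the real work lies.
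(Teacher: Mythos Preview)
Your $W_0$ computation contains a genuine error that propagates. You assert that for $Y \cong \Sigma_2 \times S^2$ one has $c_1^2(Y) = c_2(Y) = 0$, but in fact $e(\Sigma_2 \times S^2) = -4$ and $\sigma = 0$, so $c_1^2(Y) = 2e + 3\sigma = -8$ and $c_2(Y) = -4$. Gluing along $\Sigma_2 \times S^2$ therefore does \emph{not} give an additive formula: by Lemma~\ref{Lemma S} you pick up corrections $(+48, +24, +8)$, and your $W_0(G)$ would miss the target by exactly those amounts. The paper avoids this by gluing $W_0$ along $T^2 \times S^2$ (using the Lagrangian torus $T \subset X_i$, perturbed to be symplectic, rather than the genus-two surface $F$); since $e(T^2 \times S^2) = \sigma(T^2 \times S^2) = 0$, the Chern numbers then genuinely add.

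Your approach to $W_2(G)$ has a more serious gap. Reversing orientation destroys the symplectic structure (in real dimension~6, $(-\omega)^3 = -\omega^3$), so ``orientation-reversed products'' are not available as symplectic building blocks. Even setting that aside, Lemma~\ref{Lemma B} says explicitly that $c_1 c_2$ is invariant under both point blow-ups and surface blow-ups, so no combination of blow-ups can ever produce the $-24$ correction you need in the middle slot. The paper's mechanism is entirely different and much cleaner: one simply varies the genus of the surface factor. Taking products with $\Sigma_g$ for $g = 0, 1, 2$ multiplies $(2e+3\sigma, e+\sigma, e)$ by $(6-6g, 6-6g, 2-2g) = (6,6,2),\ (0,0,0),\ (-6,-6,-2)$ respectively via Lemma~\ref{Lemma P}; this is what produces the sign pattern and the vanishing in case~(2) without any ad hoc choice of $Y$. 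The extra constants $(-48,-24,-8)$ in case~(3) then arise automatically from Lemma~\ref{Lemma S}, because the gluing for $W_2$ is along $\Sigma_2 \times \Sigma_2$, which has $c_1^2 = 8$ and $c_2 = 4$. Your recognition that the fundamental-group control (Sections~\ref{Section 5.1}--\ref{Section 5.2}) is the delicate part is correct, but the Chern-number side needs to be redone along these lines.
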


\begin{proof} Let us construct the manifolds $W_i(G)$ for $i = 0, 1, 2$; the method of construction of the manifolds $Y_i(G)$ is similar. Let $X_1, X_2$ be minimal symplectic 4-manifolds of Proposition \ref{Proposition 11}. Moreover, using the procedure described in Section \ref{Section 5.2} on $X_1$, we can produce a minimal symplectic 4-manifold $X_1(G)$ such that $\pi_1(X_1(G)) \cong G$. The manifold $X_1(G)$ contains a homologically essential Lagrangian torus $T:= T_1$ such that $\pi_1(X_1(G) -T) \cong G$. Perturb the symplectic sum on $X_1(G)$ so that $T$ becomes symplectic \cite[Lemma 1.6]{[Go]}. We make use of the following prototypes of symplectic sums according to the item in the statement of the theorem.

\begin{center} (1): $W_0(G):= X_1(G) \times S^2 \#_{T^2\times S^2} X_2 \times S^2$\end{center}

 \begin{center} (2): $W_1(G):= X_1(G) \times T^2 \#_{T^2\times T^2} X_2 \times T^2$\end{center}

\begin{center} (3): $W_2(G): = X_1(G) \times \Sigma_2 \#_{\Sigma_2\times \Sigma_2} X_2 \times \Sigma_2$.\end{center}

The reader should notice that the manifolds $X_1, X_2$ are to be chosen from the manifolds $Z_{1, 1}, Z_{1, 2}$ of Proposition \ref{Proposition 11} appropriately, so that they contain the necessary symplectic submanifolds for the symplectic sums performed. 

A description of the choices of gluing maps for the symplectic sums is given in Section \ref{Section 5.1}. The computations of Chern numbers are straight-forward, and follow from Lemma \ref{Lemma P} and Lemma \ref{Lemma S}. Minding the claim about fundamental groups, we refer the reader to  Section \ref{Section 5}. The manifolds $Y_i(G)$ are constructed in a similar way by taking the manifolds $X_1, X_2$ in the procedure described above from \cite{[PS]}. The details are left to the reader.\\
\end{proof}

\subsection{Realization of all possible combinations of Chern numbers.}{\label{Section 4.3}}

The geography of \emph{non-minimal} simply connected symplectic 6-manifolds was settled by M. Halic in \cite{[MH]}. The examples given by him rely heavily on the blow-ups of some basic building blocks. His main tool to realize all possible combinations of Chern numbers with a symplectic 6-manifold is stated in the following result.

\begin{lemma}{\label{Lemma M}} M. Halic \cite[Lemma 4.1]{[MH]}. Let $X$ be a symplectic 6-manifold whose Chern numbers are given by $(c_1^3, c_1c_2, c_3) = (2a, 24 b, 2 c)$, and which has the following properties. We denote by $N(E_X)$ the normal bundle of a projective line $E \subset X$.
\begin{itemize}
\item The manifold $X$ contains a symplectically embedded product $U\times D$, where $U$ is an open subset of a symplectic 4-manifold and $D$ is a disc. Assume that $U$ contains a projective line $E$ having the property that \begin{center}$- \alpha:= \left<c_1(N(E_{X})), [E]\right> \leq - 1$.\end{center}

\item There is a symplectic genus two surface $\Sigma_2 \subset X$, which is disjoint from the exceptional sphere $E$, and $[\Sigma_2]^2 = 0$.
\end{itemize}

Then, by blowing up $p$ points, $r$ distinct copies of $E$, and $z$ distinct copies of $\Sigma_2$, one obtain all triples of Chern numbers of the form $(2 a', 24 b, 2 c')$, where $a', c'$ are arbitrary integers.
\end{lemma}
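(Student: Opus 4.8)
The plan is to exploit the two displayed blow-up formulas of Lemma \ref{Lemma B} as independent ``dials'' that let us move the Chern numbers $c_1^3$ and $c_3$ freely while pinning $c_1 c_2$ fixed. Observe first that all three blow-up operations listed in Lemma \ref{Lemma B} leave $c_1 c_2$ unchanged, so starting from $(2a, 24b, 2c)$ the middle Chern number remains $24b$ no matter how many points, copies of $E$, or copies of $\Sigma_2$ we blow up. Thus the entire problem reduces to showing that the pair $(c_1^3, c_3)$ can be driven to any target $(2a', 2c')$ with the same parity; the value $24b$ simply comes along unchanged.

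Next I would record exactly how each of the three available blow-ups shifts $(c_1^3, c_3)$. Blowing up a point changes $(c_1^3, c_3)$ by $(-8, +2)$. Blowing up along a copy of the projective line $E$, whose normal bundle satisfies $\langle c_1(N(E_X)), [E]\rangle = -\alpha$ with $\alpha \geq 1$, changes $(c_1^3, c_3)$ by $(6(0-1) - 2(-\alpha), -2(0-1)) = (2\alpha - 6, +2)$ since $E$ has genus $g = 0$. Blowing up along the genus-two surface $\Sigma_2$ with $[\Sigma_2]^2 = 0$ (hence $\langle c_1(N(\Sigma_2)), [\Sigma_2]\rangle = 0$) changes $(c_1^3, c_3)$ by $(6(2-1), -2(2-1)) = (6, -2)$. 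The hypotheses that $E$ and $\Sigma_2$ are disjoint, and that $E$ sits inside a product region $U \times D$ (so that the $r$ copies of $E$ can be taken disjoint by translating in the disc factor $D$), guarantee that all $p + r + z$ blow-up centers can be chosen pairwise disjoint, so the net effect is simply the sum of the individual shifts.

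Combining the three shifts, the total change in $(c_1^3, c_3)$ after blowing up $p$ points, $r$ copies of $E$, and $z$ copies of $\Sigma_2$ is
\begin{equation}
\bigl(c_1^3, c_3\bigr) \longmapsto \bigl(c_1^3 + (-8)p + (2\alpha - 6)r + 6z,\; c_3 + 2p + 2r - 2z\bigr).
\label{eq:shift}
\end{equation}
Since each operation preserves the parity of $c_1^3$ and of $c_3$ (the increments $-8, 2\alpha - 6, 6$ are even and $2, 2, -2$ are even), the targets must have the form $(2a', 2c')$, matching the statement. To hit an arbitrary such target I would argue in two stages: first use the $\Sigma_2$-blow-up, which increments $c_3$ by $-2$ and $c_1^3$ by $+6$, together with the point blow-up, which increments $c_3$ by $+2$ and $c_1^3$ by $-8$, to realize any desired change in $c_3$ while moving $c_1^3$ by a controllable amount; then use copies of $E$ to correct $c_1^3$. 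Concretely, the $c_3$-equation $c' - c = p + r - z$ and the $c_1^3$-equation $a' - a = -4p + (\alpha - 3)r + 3z$ form a linear system in the nonnegative integers $p, r, z$; because the two increment vectors $(-4, 1)$ (point) and $(3, -1)$ (surface) already span $\Z^2$ over the integers, any integer target $(a' - a, c' - c)$ is an integer combination of them, and one adds a large common multiple of $E$-blow-ups (or simply a large multiple of the point-plus-surface pair) to push all three counts $p, r, z$ into the nonnegative range.

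The main obstacle is precisely this last nonnegativity bookkeeping: the three increment vectors in \eqref{eq:shift} must generate, as a \emph{nonnegative} integer cone containing a full neighborhood of the origin in the sublattice $2\Z \oplus 2\Z$, every achievable shift. The key point making this work is that the point blow-up and the $\Sigma_2$ blow-up move $(c_1^3, c_3)$ in genuinely opposite $c_3$-directions ($+2$ versus $-2$) while the $E$ blow-ups supply, for $\alpha \geq 1$, an extra degree of freedom in the $c_1^3$ direction; since we may always blow up arbitrarily many points and surfaces, any deficit in one coordinate can be absorbed by padding with additional paired blow-ups (one point and one $\Sigma_2$ together shift $c_1^3$ by $-2$ while leaving $c_3$ fixed), so no lower bound on $a'$ or $c'$ is imposed and every target $(2a', 24b, 2c')$ is realized. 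I would close by noting that each blow-up stays within the symplectic category, so the resulting manifold is again symplectic with the asserted Chern numbers.
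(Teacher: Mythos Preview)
The paper does not actually prove this lemma; it is quoted from Halic \cite[Lemma 4.1]{[MH]} and the reader is referred there for the argument. Your approach is the intended one and is correct in outline: Lemma \ref{Lemma B} shows that $c_1c_2$ is preserved by all three blow-up operations, the shift vectors you compute for $(c_1^3, c_3)$ are right, and the remainder is integer linear algebra.

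Two points deserve tightening. First, the implication ``$[\Sigma_2]^2 = 0$ hence $\langle c_1(N(\Sigma_2)), [\Sigma_2]\rangle = 0$'' is not literally valid for a surface in a $6$-manifold (the cup-square of a degree-$4$ class lands in $H^8 = 0$ and is vacuous); the intended hypothesis, visible in the paper's applications, is that $\Sigma_2$ has trivial normal bundle, which simultaneously gives $c_1(N) = 0$ and supplies the $z$ disjoint parallel symplectic copies you need but did not explicitly produce. Second, the nonnegativity bookkeeping can be made fully explicit rather than sketched: setting $A = a' - a$ and $C = c' - c$, the $c_3$-equation gives $z = p + r - C$, and substituting into the $c_1^3$-equation yields $p = \alpha r - A - 3C$; since $\alpha \geq 1$, choosing any integer $r \geq \max\bigl(0, \lceil (A+3C)/\alpha \rceil, \lceil (A+4C)/(\alpha+1) \rceil\bigr)$ forces $p, z \geq 0$. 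With these two clarifications your proof is complete.
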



Theorem \ref{Theorem 19} allows us to recover the main result of \cite{[MH]}, and extend it to manifolds with other choices of fundamental groups as exemplified in the following statement (cf. Remark \ref{Remark 15} and Corollary \ref{Corollary 3}). 

\begin{example}{\label{Example E}} Let $X(G)$ be a symplectic 6-manifold. Then, 
\begin{center}
$c_1^3(X(G)) \equiv c_3(X(G)) \equiv 0$ mod 2, and\end{center} \begin{center} $c_1c_2(X(G)) \equiv 0 $ mod 24.
\end{center}
Conversely, let  $G \in \{\Z_p, \Z_p \oplus \Z_q, \Z\oplus \Z_q, \Z, \pi_1(\Sigma_g), F_n\}$. Any triple $(a, b, c)$ of integers satisfying $a \equiv c \equiv 0$ mod 2 and $b \equiv 0$ mod 24 occurs as a triple $(c_1^3, c_1c_2, c_3)$ of Chern numbers of a symplectic 6-manifold $X(G)$ with $\pi_1(X(G)) \cong G$. 
\end{example}

\begin{proof} Let $X$ be a symplectic 6-manifold. Then $X$ admits an almost-complex structure, and \cite[Proposition 9]{[OV]} implies that the Chern numbers of $X$ satisfy $c_1^3(X) = c_3(X) \equiv 0$ mod 2 and $c_1c_2(X) \equiv 0$ mod 24.

Now, let $X_1(G)$ be the manifold as in the proof of Theorem \ref{Theorem 19}. There are two things to be checked in order to establish the converse. First, one needs to see that the result of blowing up at a point the manifolds constructed in Theorem \ref{Theorem 19}  satisfy the hypothesis of Lemma \ref{Lemma M}. As a consequence, we are then able to conclude that all the possible values of $c_1^3$ and $c_3$ are realized by a symplectic 6-manifold with the given fundamental group. Second, it needs to be seen that such manifolds fulfill all the possible values of $c_1c_2$ (values of $b$ in the statement of Lemma \ref{Lemma M}). Notice that by Lemma \ref{Lemma B}, the value $c_1c_2$ is invariant under blow ups of points and under blow ups along surfaces.

By Proposition \ref{Proposition 11}, there exists a symplectic surface of genus two $F$ inside the submanifold $X_1(G) \times \{x\} \subset X_1(G) \times \Sigma_g$ for every 6-manifold obtained as a symplectic sum in Theorem \ref{Theorem 19}. By \cite{[Go]}, every manifold of Theorem \ref{Theorem 19} contains a symplectic surface of genus two. The manifold obtained by blowing up a point that is disjoint from the surface of genus two satisfies the hypothesis of Lemma \ref{Lemma M}.

We now consider the possible values of $c_1c_2$. The claim is that all possible values are obtained by the manifolds constructed in Theorem \ref{Theorem 19}. The case $c_1c_2 = 0$ is clear. The claim is equivalent to finding  $e_i, \sigma_i, e_i', \sigma_i'$ $i = 1, 2$ that produce integers $k_+$ and $k_-$ that satisfy
\begin{center}
$6\cdot (e_1 + \sigma_1 + e_2 + \sigma_2) = 24k_+$ and 
$-6\cdot (e_1' + \sigma_1' + e_2' + \sigma_2') = 24k_-$.\\
\end{center}

The numbers $e_i, \sigma_i, e_i', \sigma_i'$ $i = 1, 2$ are as in the statement of Theorem \ref{Theorem 19}, and they correspond to the Euler characteristic and signatures of minimal 4-manifolds of Proposition \ref{Proposition 11}, where $X_1(G)$ is constructed from. The corresponding manifolds to these integer numbers can be chosen so that the equalities $(e_1 + \sigma_1 + e_2 + \sigma_2) =  4k_+$ and $(e_1' + \sigma_1' + e_2' + \sigma_2') = 4k_-$ are satisfied. Thus, all possible values of $c_1c_2$ are realized by manifolds from Theorem \ref{Theorem 19}. The corollary now follows from Lemma \ref{Lemma M}.
\end{proof}

\section{Fundamental group computations}{\label{Section 5}}

\subsection{Choices of gluing maps for symplectic sums in Section \ref{Section 4.2}}{\label{Section 5.1}} An important piece of data in the 6-dimensional symplectic sum constructions of Section \ref{Section 4.2} are the gluing maps. Here we proceed to describe them in detail in order to conclude the fundamental group computations claimed in the statements of Theorem \ref{Theorem 2}, \ref{Corollary 3}, and Theorem \ref{Theorem 4}. The setting is as follows. Let $X$ and $Z$ be symplectic 4-manifolds containing symplectic tori $T\subset X$, and $T'\subset Z$ or symplectic surfaces of genus two $F\subset X$ and $F'\subset Z$ (see Proposition \ref{Proposition 11}); notice that according to Proposition \ref{Proposition 11} or Theorem \ref{Theorem Y} one might need to perturb the symplectic form of the ambient 4-manifold using \cite[Lemma 1.6]{[Go]} for the homologically essential Lagrangian tori to be assumed symplectic. \\

Regarding the homotopy  computations, we assume \begin{center}$\pi_1(X - T) \cong \pi_1(X)$ and $\pi_1(Z - T') \cong \pi_1(Z) \cong \{1\}$,\end{center}
\begin{center}$\pi_1(X - F) \cong \pi_1(X)$ and $\pi_1(Z - F') \cong \pi_1(Z) \cong \{1\}$.\end{center}

Take a symplectic surface of genus $g$, and build the 6-manifolds $X \times \Sigma_g$ and $Z\times \Sigma_g$ equipped with the product symplectic form. For $g = 0, 1$, these symplectic 6-manifolds contain a symplectic submanifold $T^2\times \Sigma_g$: $T\times \Sigma_g \subset X\times \Sigma_g$ and $T'\times \Sigma_g \subset Z\times \Sigma_g$. Similarly, we have the symplectic submanifolds (case $g = 2$) $F\times \Sigma_2 \subset X\times \Sigma_2$ and $F'\times \Sigma_2 \subset Z\times \Sigma_2$.  In all our constructions, $\Sigma_g$ is either a 2-sphere $S^2$ ($g = 0$), a 2-torus $T^2$ ($g = 1$), or  a surface of genus two $\Sigma_2$ ($g = 2$); surfaces of higher genus can be used with a small modification to our arguments.\\

For surfaces of genus $g = 0, 1$, the 6-manifolds are symplectic sums of the form

\begin{center} $W_g:= X \times \Sigma_g \#_{T\times \Sigma_g = T'\times \Sigma_g} Z \times \Sigma_g$,
\end{center}

where the gluing map is a diffeomorphism \begin{center}$\phi_g: \partial(T\times \Sigma_g \times D^2) \rightarrow \partial(T'\times \Sigma_g \times D^2) \cong T^2\times \Sigma_g \times S^1$.\\ \end{center}

Similarly, for surfaces of genus two we built 

\begin{center} $W_2:= X \times \Sigma_2 \#_{F\times \Sigma_2 = F'\times \Sigma_2} Z \times \Sigma_2$,
\end{center}

where the gluing map given by a diffeomorphism \begin{center}$\phi_2: \partial(F\times \Sigma_2\times D^2) \rightarrow \partial(F'\times \Sigma_2 \times D^2) \cong \Sigma_2\times \Sigma_2 \times S^1$.\\ \end{center}

\begin{lemma}{\label{Lemma F}} $\pi_1(W_g) \cong \pi_1(X)$ for $g = 0, 1, 2$.

\end{lemma}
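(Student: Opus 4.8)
The plan is to compute $\pi_1(W_g)$ using the van Kampen theorem applied to the symplectic sum decomposition, reducing everything to the known fundamental group data supplied by Proposition \ref{Proposition 11} together with the product structure. Recall that $W_g$ is obtained by removing open tubular neighborhoods of $T \times \Sigma_g$ (resp. $F \times \Sigma_2$) from $X \times \Sigma_g$ and of $T' \times \Sigma_g$ (resp. $F' \times \Sigma_2$) from $Z \times \Sigma_g$, and gluing along the common boundary $\partial(T \times \Sigma_g \times D^2) \cong T^2 \times \Sigma_g \times S^1$ (resp. $\Sigma_2 \times \Sigma_2 \times S^1$) via $\phi_g$. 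The first step is to record, via the product projection and the long exact sequences for a complement of a codimension-two submanifold, that $\pi_1\bigl((X \times \Sigma_g) - \nu(T \times \Sigma_g)\bigr) \cong \pi_1(X - T) \times \pi_1(\Sigma_g)$ and likewise for the $Z$ side, where $\nu$ denotes the tubular neighborhood; the deleted factor contributes only the meridian circle $S^1$, whose generator is the meridian $\mu_T$ of the $4$-dimensional complement.

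Next I would feed in the hypotheses. By assumption $\pi_1(X - T) \cong \pi_1(X)$ and $\pi_1(Z - T') \cong \pi_1(Z) \cong \{1\}$ (and similarly for the genus-two surfaces). So van Kampen gives
\begin{center}
$\pi_1(W_g) \cong \bigl(\pi_1(X) \times \pi_1(\Sigma_g)\bigr) \ast_{\pi_1(\partial)} \bigl(\pi_1(\Sigma_g)\bigr),$
\end{center}
amalgamated over the image of $\pi_1$ of the gluing boundary $T^2 \times \Sigma_g \times S^1$. The key mechanism is that the $Z$ side is simply connected away from the deleted piece, so its only contribution to the amalgam is through the boundary torus/surface factors and the gluing circle. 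Concretely, the meridian $\mu_{T'}$ of $T'$ bounds a disc in the $Z$-complement (it is nullhomotopic there, since $\pi_1(Z - T')$ is trivial and the meridian is in its normal closure), and the two Lagrangian push-offs of $T'$ are also killed; under the gluing map $\phi_g$ these relations are transported to the $X$ side, where they kill the meridian $\mu_T$ and impose that the push-offs of $T$ become trivial. The effect is that nothing new is added to $\pi_1(X)$, and the extra $\pi_1(\Sigma_g)$ factors coming from the second copy get identified with those already present on the $X$ side through the diagonal $\Sigma_g$ in the boundary.

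The main obstacle—and the step requiring genuine care rather than routine bookkeeping—is controlling exactly what the gluing map $\phi_g$ does on the fundamental group of the boundary, because the amalgamation is taken over a relatively large group ($\pi_1(T^2 \times \Sigma_g \times S^1)$ or $\pi_1(\Sigma_2 \times \Sigma_2 \times S^1)$) and one must verify that the identifications collapse precisely the meridian and the push-off data while leaving $\pi_1(X)$ untouched and merging the surface factors rather than doubling them. This is exactly the content deferred to the detailed description of the gluing maps in Section \ref{Section 5.1}, so I would invoke that choice of $\phi_g$ to guarantee the meridians and Lagrangian push-offs match up as claimed. Once the gluing is pinned down, the computation that the $Z$-contribution and the redundant surface generators are absorbed is straightforward, yielding $\pi_1(W_g) \cong \pi_1(X)$ for each $g = 0, 1, 2$.
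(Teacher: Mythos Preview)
Your van Kampen setup and the identification $\pi_1\bigl((X\times\Sigma_g)-\nu(T\times\Sigma_g)\bigr)\cong\pi_1(X-T)\times\pi_1(\Sigma_g)$ are correct, and you are right that the whole argument hinges on the choice of $\phi_g$. But the mechanism you describe is not the one that works, and the gap is genuine rather than cosmetic. You say the gluing should ``merge the surface factors rather than doubling them'' via the diagonal $\Sigma_g$ in the boundary. A product-preserving gluing that does exactly this would indeed identify the two copies of $\pi_1(\Sigma_g)$, but the resulting amalgam would then be $\pi_1(X)\times\pi_1(\Sigma_g)$, not $\pi_1(X)$. Nothing in your account kills the $\Sigma_g$-factor on the $X$ side: the relations you import from the $Z$ side (triviality of $\mu_{T'}$ and of the push-offs of $T'$) only tell you that $\mu_T$ and the push-offs of $T$ are trivial, which you already knew from $\pi_1(X-T)\cong\pi_1(X)$.

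What the paper actually does (for $g=1$, say) is choose $\phi_1$ to \emph{swap} the two $T^2$-factors in the boundary $T\times T^2\times S^1\cong T^4\times S^1$: the push-off loops $x,y$ of $T\subset X$ are sent to the generators $w',z'$ of the external $\Sigma_g=T^2$ factor on the $Z$ side, and the generators $w,z$ of the external $T^2$ factor on the $X$ side are sent to the push-off loops $x',y'$ of $T'\subset Z$. Since $x=y=1$ in $\pi_1(X-T)$, the amalgamation forces $w'=z'=1$; since $x'=y'=1$ in $\pi_1(Z-T')$, it forces $w=z=1$. Thus both surface factors are killed, not merged, and one is left with $\pi_1(X)$. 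The same swap works verbatim for $g=2$ using the genus-two surfaces. Your outline would go through once you replace the ``diagonal'' identification by this cross-wiring of the internal submanifold loops with the external $\Sigma_g$ loops.
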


\begin{proof}
We use Seifert-van Kampen Theorem to compute the fundamental group of $W_g$ for $g = 0, 1, 2$. In order to obtain $\pi_1(W_g) \cong \pi_1(X)$, we choose a diffeomorphism $\phi_g$ that maps the generators of the fundamental groups involved as follows; remember we can assume $\pi_1(Z - T') \cong \pi_1(Z) \cong \{1\}$ (see Proposition \ref{Proposition 11}). The case $g = 0$ is clear, since there are isomorphisms $\pi_1(X\times S^2 - T\times S^2) \cong \pi_1(X)$ and $\pi_1(Z\times S^2 - T'\times S^2) \cong \{1\}$.\\

We work out the case $g = 1$; the case $g = 2$ follows verbatim from our argument. In this case, the symplectic 4-manifold along which the gluing in the symplectic sum \cite{[Go]} is performed is the 4-torus

\begin{center}
$T\times T^2 \cong T^4 \cong T'\times T^2$.\\
\end{center}

Let the groups $\pi_1(X\times T^2 - T\times T^2) \cong \pi_1(X) \oplus \Z\oplus \Z$ and $\pi_1(Z\times T^2 - T'\times T^2)\cong \Z \oplus \Z$ be generated by $\{x_1, x_2, \ldots, x_n, x = 1, y = 1, w, z\}$ and $\{x' = 1, y' = 1, w', z'\}$ respectively, where $\{x_1, x_2, \ldots, x_n\}$ are generators of $\pi_1(X)$, $\{x, y\}$ are push offs of generators of $\pi_1(T)$, and each element $\{w', z'\}$ generates a $\Z$. Let $\mu$ and $\mu'$ be the respective (based) meridians of the submanifolds. Notice that in both cases the meridians $\mu = 1 = \mu'$ are trivial; we assume $\mu$ gets identified with $\mu'^{-1}$ during the gluing employed to construct $W_1$. Choose $\phi_1$ to be the diffeomorphism that identifies the generators of the fundamental groups as follows:
\begin{center}
$x\mapsto w', y\mapsto z', w\mapsto x', z\mapsto y'$.
\end{center}

This choice of diffeomorphism allows us to conclude that the extra generators $w, z, w', z'$ are killed during the gluing. An application of the Seifert-van Kampen Theorem implies $\pi_1(W_1) \cong \pi_1(X)$, as desired.
\end{proof}

\subsection{Computations of $\pi_1$ Theorem \ref{Theorem 2}, Theorem \ref{Theorem 4}, Corollary \ref{Corollary 3}, and Theorem \ref{Theorem 19}.} {\label{Section 5.2}} Minding the geography of symplectic 6-manifolds, the desired fundamental groups are obtained as follows. A minimal 4-manifold $Z_{1, 1}$ of Proposition \ref{Proposition 11} contains two homologically essential Lagrangian tori with trivial meridians; in particular, there is an isomorphism $\pi_1(Z_{1,1} - (T_1 \cup T_2)) \rightarrow \pi_1(Z_{1,1}) \cong \{1\}$ (similar argument works for $Z_{1, 2}$).\\

The idea is to build the symplectic sum of $Z_{1,1}$ along $T_1$ and a chosen manifold, in order to obtain a manifold $P$ with the desired fundamental group $G$. The homologically essential Lagrangian torus $T_2 \subset P$ satisfies  that the homomorphism induced by inclusion $T_2\hookrightarrow P$ is an isomorphism $\pi_1(P - T_2) \rightarrow \pi_1(P) \cong G$. This is then used in our 6-dimensional symplectic sums in Section \ref{Section 4.2} to obtain the prescribed fundamental group. We proceed to prove the fundamental group claims on the four-dimensional building blocks employed.

\begin{proof} \begin{itemize} \item Case abelian group of small rank. To be precise, we construct manifolds with fundamental group among $\{\{1\}, \Z_p, \Z_p \oplus \Z_q, \Z, \Z \oplus \Z_q, \Z\oplus \Z\}$. Consider the 4-torus $T^2\times T^2$ endowed with the product symplectic form. The 2-torus $\{x\}\times T^2 \subset T^2\times T^2$ is a symplectic submanifold of self intersection zero. Start by building the symplectic sum
\begin{center}
$Z(\Z\oplus \Z):= {Z_{1,1}}_{T_1 = T^2} T^2\times T^2$.
\end{center}

Since $\pi_1(Z_{1,1}) \cong \pi_1(Z_{1,1} - T_1) \cong \{1\}$, it follows from Seifert-van Kampen Theorem that $\pi_1(Z(\Z\oplus\Z)) \cong \Z\oplus \Z$.  There are two pairs of geometrically dual Lagrangian tori inside $Z(\Z\oplus \Z)$, on which one can perform Luttinger surgeries \cite{[Lu], [ADK]}. These tori come from the Lagrangian tori inside the 4-torus used in the symplectic sum construction. Applying the appropriate surgery yields the desired fundamental groups: see  \cite[Section 2]{[BK0]}, \cite[Theorem 1]{[BK]} for the fundamental group computations, and \cite{[T0]} for details.

\item Case surface groups. Manifolds with fundamental group $\{1\}$ and $\Z\oplus \Z$ were constructed in the previous item. To construct a manifold $S$ with $\pi_1(S) \cong \pi_1(\Sigma_g)$ for $g\geq 2$, we proceed as follows. Consider the product $T^2\times \Sigma_g$ of a 2-torus and a genus $g\geq 2$ surface endowed with the product symplectic form. The submanifold $T^2\times \{x\} \subset T^2\times \Sigma_g$ is symplectic and its self intersection number is zero. One builds the symplectic sum 
\begin{center}
$S:= {Z _{1,1}}_{T_1 = T^2} T^2 \times \Sigma_g$.
\end{center}

Since $\pi_1(Z_{1,1}) \cong \pi_1(Z_{1,1} - T_1) \cong \{1\}$, it follows from Seifert-van Kampen Theorem that $\pi_1(S) \cong \pi_1(\Sigma_g)$ as desired.
\item Case free groups of rank $n \in \N$, $F_n$. The case $n = 1$ was proven previously. Let $\Sigma_{n}$ be a surface of genus $n$, and denote the standard generators of $\pi_1(\Sigma_{n})$ by $x_1, y_1, \ldots, x_{n}, y_{n}$. Let $\phi: \Sigma_{n} \rightarrow \Sigma_{n}$ be a diffeomorphism consisting of the composition of $n - 1$ Dehn twists along the loops $x_i$; denote by $Y$ the mapping torus of $\phi$ that  fibers over the circle.

Build the product  $Y\times S^1$, which admits a symplectic structure \cite{[T]}. This manifold contains a symplectic torus $T'$,  given by $T' = S\times S^1$, where $S$ is a section of $Y \rightarrow S^1$. The fundamental group $\pi_1(Y \times S^1)$ is calculated by expressing $\pi_1(Y)$ as an HNN extension, which yields
\begin{center}
$\pi_1(Y\times S^1) = \left<x_i, y_i, t : tx_i t^{-1} = x_i, ty_it^{-1} = y_i x_i\right> \oplus \Z s$.\\
\end{center}

Build the symplectic sum
\begin{center}
$F(n):= {Z_{1,1}}_{T_1 = T'} Y\times S^1$.\\
\end{center}

Since $\pi_1(Z_{1,1}) \cong \pi_1(Z_{1,1} - T_1) \cong \{1\}$, it follows from Seifert-van Kampen Theorem that the generators $t$ and $s$ are killed during the gluing. The relations $t = 1 = s$ imply that $x_i = 1$ for all i. The Seifert-van Kampen Theorem implies that the group $\pi_1(F(n))$ is generated by the $y_i$'s with no relations among them in the group presentation. That is, $\pi_1(F(n))$ is a free group on $n$ generators. 

\end{itemize}
\end{proof}

\begin{remark}\emph{On the minimality of  6-manifolds}. The notion of minimality for symplectic 6-manifolds (see \cite[Definition 5.2]{[TY]}) in generality is yet to be established  (compare with the notion in dimension four \cite{[GS]}); this is an interesting problem (cf. \cite{[TY]}).\\
\end{remark}

\subsection{Proofs of Theorem \ref{Theorem 4} and Corollary \ref{Corollary 3}.}{\label{Section 5.3}} The paper finishes with the proofs of the results promised in the Introduction \ref{Intro}. The precise statements are the following. 


\begin{theorem} (cf. Theorem \ref{Theorem 4}).{\label{Theorem 22}}  Let $G$ be a group with a presentation that consists of $g$ generators and $r$ relations. Let $e_i, \sigma_i$ $i =1, 2$ be integers that satisfy $2e_i + 3 \sigma_i \geq 0, e_i + \sigma_i \equiv 0$ mod 4. There exist symplectic 6-manifolds $W_0(G), W_1(G), W_2(G)$ with the following Chern numbers:
\begin{enumerate}

\item 
$c_1^3(W_0(G)) = 6 \cdot (3\sigma_1 + 2( e_1 + 4(g + r)) + 3\sigma_2 + 2 e_2)$,\\
$c_1c_2(W_0(G)) = 6 \cdot ((e_1 + 4(g + r)) + \sigma_1 + e_2 + \sigma_2)$, and\\
$c_3(W_0) = 2\cdot ((e_1 + 4(g + r)) + e_2)$;\\

\item 
$c_1^3(W_1(G)) = 0$,\\
$c_1c_2(W_1(G)) = 0$, and\\
$c_3(W_1(G)) = 0$;\\

\item 
$c_1^3(W_2(G)) =  - 6 \cdot (3\sigma_1 + 2 (e_1 + 4(g + r)) + 3\sigma_2 + 2 e_2) - 48$,\\
$c_1c_2(W_2(G)) =  - 6 \cdot ((e_1 + 4(g + r)) + \sigma_1 + e_2 + \sigma_2) - 24$, and\\
$c_3(W_2(G)) =  - 2\cdot ((e_1 + 4(g + r)) + e_2) - 8$.
\end{enumerate}
There is an isomorphism $\pi_1(W_i) \cong G$ (for $i = 0, 1, 2$).\\

Assume $s_1, s_2\geq 1$, and $n_1, n_2\in \N$. There exist spin symplectic 6-manifolds $Y_i(G)$ for $i = 0, 1, 2$ with $\pi_1(Y(G)) \cong G$, and whose Chern numbers are
\begin{enumerate}
\item 
$c_1^3(Y_0(G)) = 6 \cdot (8n_1 - 8 + 8(g + r) + 8n_2 - 8)$,\\
$c_1c_2(Y_0(G)) = 6 \cdot (4n_1 - 4 + 4(g + r) + 8s_1 + 4n_2 - 4 + 8s_2)$, and\\
$c_3(Y_0(G)) = 2\cdot (4n_1 - 4 + 4(g + r) + 24s_1 + 4n_2 - 4 + 24s_2)$;\\

\item 
$c_1^3(Y_1(G)) = 0$,\\
$c_1c_2(Y_1(G)) = 0$, and\\
$c_3(Y_1(G)) = 0$;\\

\item 
$c_1^3(Y_2(G)) =  - 6 \cdot (8n_1 - 8 + 8(g + r) + 8n_2 - 8) - 48$,\\
$c_1c_2(Y_2(G)) =  - 6 \cdot (4n_1 - 4 + 4(g + r) + 8s_1 + 4n_2 - 4 + 8s_2) - 24$, and\\
$c_3(Y_2(G)) =  - 2\cdot (4n_1 - 4 + 4(g + r) + 24s_1 + 4n_2 - 4 + 24s_2) - 8$.
\end{enumerate}

\end{theorem}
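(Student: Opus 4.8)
The plan is to run the construction of Theorem~\ref{Theorem 19} essentially verbatim, changing only the four-dimensional input that carries the fundamental group. In Theorem~\ref{Theorem 19} the factor $X_1(G)$ was produced from a simply connected $Z_{1,1}$ or $Z_{1,2}$ of Proposition~\ref{Proposition 11} by the special-group recipe of Section~\ref{Section 5.2}; since $G$ is now an arbitrary finitely presented group, I would instead feed in the four-manifold supplied by Theorem~\ref{Theorem 1} (equivalently Theorem~\ref{Theorem 14}), namely $Z_{1,1}\#_{T}BK$ for cases (1),(2) and $Z_{1,2}\#_{T}BK$ for case (3). This gives a minimal symplectic $4$-manifold $X_1(G)$ with $\pi_1(X_1(G))\cong G$, Euler characteristic $e_1+4(g+r)$ and signature $\sigma_1$, the shift $+4(g+r)$ being exactly the contribution $c_2(BK)=4(g+r)$, $\sigma(BK)=0$ recorded in Section~\ref{Section 2.2}. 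By Theorem~\ref{Theorem Y} this block carries a homologically essential Lagrangian torus $T$ and a symplectic genus-two surface $F$ of trivial normal bundle with $\pi_1(X_1(G)-T)\cong\pi_1(X_1(G)-F)\cong G$; a Gompf perturbation \cite[Lemma 1.6]{[Go]} renders $T$ symplectic. The second factor $X_2$ is taken simply connected from Proposition~\ref{Proposition 11}, again choosing $Z_{1,1}$ or $Z_{1,2}$ so that the symplectic submanifold needed for the sum is present, and carrying the parameters $e_2,\sigma_2$.

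With these inputs I would form the three six-dimensional symplectic sums exactly as in Theorem~\ref{Theorem 19},
\begin{center}
$W_0(G):=X_1(G)\times S^2 \#_{T^2\times S^2} X_2\times S^2$,\\
$W_1(G):=X_1(G)\times T^2 \#_{T^2\times T^2} X_2\times T^2$,\\
$W_2(G):=X_1(G)\times \Sigma_2 \#_{\Sigma_2\times\Sigma_2} X_2\times\Sigma_2$,
\end{center}
with gluing maps prescribed in Section~\ref{Section 5.1}. The Chern numbers then drop out by substitution into Lemma~\ref{Lemma P} and Lemma~\ref{Lemma S}: each product factor contributes the values dictated by the $(e,\sigma)$ of its $4$-dimensional factor, while the correction terms depend only on $c_1^2$ and $c_2$ of the gluing surface ($T^2\times S^2$ and $T^4$ have $c_1^2=c_2=0$, whereas $\Sigma_2\times\Sigma_2$ has $c_1^2=8$, $c_2=4$). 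Carrying the replacement $e_1\mapsto e_1+4(g+r)$ through the formulas of Theorem~\ref{Theorem 19} reproduces every stated number, and in particular leaves $W_1(G)$ with all Chern numbers zero, since the factors $6-6g$ and $2-2g$ of Lemma~\ref{Lemma P} vanish for $g=1$.

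For the fundamental group I would invoke Lemma~\ref{Lemma F}: as $X_2$ is simply connected and the gluing submanifold has trivial meridian, the Seifert-van Kampen theorem gives $\pi_1(W_i(G))\cong\pi_1(X_1(G))\cong G$ for $i=0,1,2$, once the gluing map identifies and kills the two surface-direction generators. The spin family $Y_i(G)$ is assembled identically, taking $X_1(G)$ to be the spin $4$-manifold with $\pi_1\cong G$ of Theorem~\ref{Theorem 13} (parameters $s_1,n_1$) and $X_2$ a simply connected spin $4$-manifold from \cite{[PS]} (parameters $s_2,n_2$). Here I would record that a product $M\times\Sigma_g$ of a spin $4$-manifold with an oriented surface is again spin, since $w_2(\Sigma_g)=0$, and that Gompf's criterion \cite[Proposition 1.2]{[Go]} keeps the symplectic sum spin.

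The main obstacle is not the Chern arithmetic, which is mechanical, but verifying that the general block $X_1(G)$ of prescribed $(e_1,\sigma_1)$ really retains a symplectic torus and a symplectic genus-two surface whose complements carry the full group $G$. Theorem~\ref{Theorem Y} guarantees this only for the single pair $(c_1^2,\chi_h)=(14+8(g+r),2+(g+r))$; to reach arbitrary admissible $(e_1,\sigma_1)$ I would have to follow the surviving Lagrangian torus $T_2\subset Z_{1,1}$ and the genus-two surface inherited from $Z_{1,2}$ through the sum with $BK$, checking via the computations of \cite[Section 3, Section 4]{[Y]} and Section~\ref{Section 2.3} that their meridians die and that the complement still surjects onto $G$ with the expected kernel. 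Once this persistence is secured, the rest is a routine assembly of Lemma~\ref{Lemma P}, Lemma~\ref{Lemma S}, and Lemma~\ref{Lemma F}.
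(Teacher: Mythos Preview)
Your proposal is correct and mirrors the paper's own proof essentially verbatim: construct $X_1(G)=BK\#_{T=T_1}X_1$ with $X_1\in\{Z_{1,1},Z_{1,2}\}$ from Proposition~\ref{Proposition 11}, take $X_2$ simply connected from the same source (respectively from \cite{[PS]} and Theorem~\ref{Theorem 13} in the spin case), form the three product-then-sum $6$-manifolds with the gluing of Section~\ref{Section 5.1}, and read off the Chern numbers and $\pi_1$ via Lemmas~\ref{Lemma P}, \ref{Lemma S}, \ref{Lemma F}. The obstacle you flag in your last paragraph is exactly the point the paper handles by choosing $X_1\in\{Z_{1,1},Z_{1,2}\}$ appropriately so that the second torus $T_2$ (or the genus-two surface $F$) survives the sum with $BK$ with trivial meridian, the complement computation being that of Section~\ref{Section 2.3} and \cite[Sections 3--4]{[Y]}.
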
 

\begin{proof} The argument is verbatim to the one employed in proof of Theorem \ref{Theorem 19}, modulo one of the building blocks. This new piece is going to be taken from the construction of Theorem \ref{Theorem Y}. The changes go as follows. Take a minimal symplectic 4-manifold $X_1$ from Proposition \ref{Proposition 11}, and perturb the symplectic form so that the torus $T_1$ becomes symplectic \cite[Lemma 1.6]{[Go]}. Build the 4-dimensional symplectic sum \cite{[Go]}

\begin{center}

$X_1(G):= BK \#_{T = T_1} X_1$

\end{center}

where $BK$ is the minimal symplectic 4-manifold with Euler characteristic given by $c_2(BK) = 4(g + r)$, signature $  \sigma(BK) = 0$, and fundamental group $\pi_1(BK) \cong \left<g_1, \ldots, g_g | r_1, \ldots, r_r\right> \cong G$ described in Section \ref{Section 2.2}. A straight-forward computation yields $c_2(X_1(G)) = c_2(X_1) + 4(g + r), \sigma(X_1(G)) = \sigma(X_1)$. The discussion in Section \ref{Section 2.3} implies $\pi_1(X_1(G)) \cong G$, which explains our choice of notation.

We proceed to build the 6-dimensional examples. Let $X_2$ be a minimal symplectic 4-manifold of Proposition \ref{Proposition 11}. We make use of the following prototypes of symplectic sums according to the item in the statement of the theorem.

\begin{center} (1): $W_0(G):= X_1(G) \times S^2 \#_{T^2\times S^2} X_2 \times S^2$\end{center}

 \begin{center} (2): $W_1(G):= X_1(G) \times T^2 \#_{T^2\times T^2} X_2 \times T^2$\end{center}

\begin{center} (3): $W_2(G): = X_1(G) \times \Sigma_2 \#_{\Sigma_2\times \Sigma_2} X_2 \times \Sigma_2$.\end{center}

A detailed description of the gluing maps chosen for the symplectic sums is given in Section \ref{Section 5.1}. Varying our choices of manifolds $X_1$ and $X_2$ results in the realization of the Chern numbers claimed; under our notation $c_2(X_i) = e_i, \sigma(X_i) = \sigma_i$ for $i = 1, 2$. The computations of the Chern numbers are straight-forward, and follow from Lemma \ref{Lemma P} and Lemma \ref{Lemma S}.

The construction of the manifolds $Y_i(G)$ for $i = 0, 1, 2$ follows the same mechanism described above, using as building blocks the manifolds of Theorem \ref{Theorem 13} and the reference \cite{[PS]}. 
\end{proof}


We prove Corollary  \ref{Corollary  3} (cf. Proof of Example \ref{Example E}).

\begin{corollary}{\label{Corollary C}}
(cf. \cite[Proposition 4.31]{[BP]}). Let $(a, b, c)$ be any triple of integers, and $G$ a finitely presented group. There exists a symplectic 6-manifold $X(G)$ with $\pi_1(X(G)) \cong G$, and whose Chern numbers are
\begin{center}
$(c_1^3(X(G)), c_1c_2(X(G)), c_3(X(G))) = (2 a, 24b, 2 c)$.
\end{center}
\end{corollary}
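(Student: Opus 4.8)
The plan is to realize an arbitrary triple $(2a, 24b, 2c)$ in two stages: first use the explicit manifolds of Theorem \ref{Theorem 22} (equivalently Theorem \ref{Theorem 4}) to realize the value $c_1c_2 = 24b$ together with the prescribed fundamental group $G$, and then use Halic's blow-up machinery (Lemma \ref{Lemma M}) to sweep out all admissible values of $c_1^3$ and $c_3$ while leaving $c_1c_2$ fixed. The decisive point, already isolated in the proof of Example \ref{Example E}, is that $c_1c_2$ is a blow-up invariant (Lemma \ref{Lemma B}), so once it is pinned down by the choice of four-dimensional input data it will not be disturbed by the subsequent surgery that adjusts the other two Chern numbers.

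First I would fix the fundamental group. For a finitely presented $G$ with $g$ generators and $r$ relations, the construction of Theorem \ref{Theorem 22} produces a symplectic $6$-manifold built as a symplectic sum $X_1(G) \times \Sigma_h \#\, X_2 \times \Sigma_h$, where $X_1(G) = BK \#_{T=T_1} X_1$ carries $\pi_1 \cong G$ and $X_2$ is a minimal simply connected four-manifold from Proposition \ref{Proposition 11}; by Lemma \ref{Lemma F} the resulting six-manifold has $\pi_1 \cong G$, and by Lemma \ref{Lemma P} and Lemma \ref{Lemma S} its Chern numbers are the explicit expressions listed there. I would then observe that $c_1c_2$ equals $6\cdot\big((e_1 + 4(g+r)) + \sigma_1 + e_2 + \sigma_2\big)$, and that since each $e_i + \sigma_i \equiv 0 \pmod 4$, this quantity is a multiple of $24$; conversely, by varying the admissible pairs $(e_i,\sigma_i)$ one can arrange $(e_1 + 4(g+r)) + \sigma_1 + e_2 + \sigma_2 = 4b$ for any target integer $b$, exactly as in the $c_1c_2$ discussion of Example \ref{Example E}. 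This fixes $c_1c_2 = 24b$ with the correct fundamental group.

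Next I would verify the hypotheses of Lemma \ref{Lemma M} for (a blow-up of) the manifold $X(G)$ just obtained. By Proposition \ref{Proposition 11}, the four-dimensional factor $X_1(G)$ contains a symplectic genus-two surface $F$ with trivial normal bundle; taking $F \times \{x\} \subset X_1(G)\times \Sigma_h$ furnishes the required symplectic genus-two surface $\Sigma_2 \subset X(G)$ with $[\Sigma_2]^2 = 0$. A single blow-up at a point disjoint from this surface produces an exceptional line $E$ with $\langle c_1(N(E)), [E]\rangle = -1$ sitting inside a symplectically embedded $U \times D$, so both bullet points of Lemma \ref{Lemma M} hold. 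Lemma \ref{Lemma M} then allows me to blow up $p$ points, $r$ copies of $E$, and $z$ copies of $\Sigma_2$ to realize every triple $(2a', 24b, 2c')$ with $a', c'$ arbitrary, while $c_1c_2 = 24b$ is preserved by Lemma \ref{Lemma B}. Since blowing up does not change $\pi_1$, the fundamental group remains $G$, which completes the realization.

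The main obstacle I anticipate is not any single calculation but the bookkeeping of \emph{simultaneously} matching the fundamental group, the divisibility constraints, and the prescribed value of $b$: one must check that the admissible region for $(e_i, \sigma_i)$ (namely $2e_i + 3\sigma_i \ge 0$, $e_i + \sigma_i \equiv 0 \pmod 4$, and the relevant positivity) is rich enough to hit every residue and sign of $4b$ while still supplying the building blocks of Proposition \ref{Proposition 11} that carry the genus-two surface $F$ with the required $\pi_1$-triviality of its complement. Once that range is confirmed, everything else is a direct appeal to Lemma \ref{Lemma M} and the blow-up invariance of $c_1c_2$.
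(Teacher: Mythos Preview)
Your proposal is correct and follows essentially the same route as the paper's own proof: you realize the prescribed $c_1c_2$ using the building blocks of Theorem~\ref{Theorem 22}, verify the hypotheses of Lemma~\ref{Lemma M} by locating the genus-two surface from Proposition~\ref{Proposition 11} and blowing up a disjoint point, and then invoke Lemma~\ref{Lemma M} together with the blow-up invariance of $c_1c_2$ from Lemma~\ref{Lemma B}. Your closing remark about the bookkeeping needed to hit every sign and residue of $b$ is exactly the point the paper handles by playing the $W_0$, $W_1$, and $W_2$ constructions against each other.
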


\begin{proof}  
Let $X(G):= X_1(G)$ be as in the proof of Theorem \ref{Theorem 22}.  There are two things to be checked. First, one needs to see that the result of blowing up at a point the manifolds constructed in Theorem \ref{Theorem 19} satisfy the hypothesis of Lemma \ref{Lemma M}. As a consequence, we are then able to conclude that all the possible values of $c_1^3$ and $c_3$ are realized by a symplectic 6-manifold with the given fundamental group. Second, it is needed to be seen that such manifolds fulfill all the possible values of $c_1c_2$ (values of $b$ in the statement of Lemma \ref{Lemma M}). Notice that by Lemma \ref{Lemma B}, the value $c_1c_2$ is invariant under blow ups of points and under blow ups along surfaces.

By Proposition \ref{Proposition 11}, there exists a symplectic surface of genus two $F$ inside the submanifold $X(G)\times \{x\} \subset X(G) \times \Sigma_g$ for every 6-manifold obtained as a symplectic sum in Theorem \ref{Theorem 19}. By \cite{[Go]}, every manifold of Theorem \ref{Theorem 19} contains a symplectic surface of genus two. The manifold obtained by blowing up a point that is disjoint from the surface of genus two satisfies the hypothesis of Lemma \ref{Lemma M}.

We now mind the possible values of $c_1c_2$. The claim is that all possible values are obtained by the manifolds constructed in Theorem \ref{Theorem 19}. The case $c_1c_2 = 0$ is clear. The claim is equivalent to finding  $e_i, \sigma_i, e_i', \sigma_i'$ $i = 1, 2$ that produce integers $k_+$ and $k_-$ that satisfy
\begin{center}
$6\cdot ((e_1 + 4(g + r)) + \sigma_1 + e_2 + \sigma_2) = 24k_+$ and 
$-6\cdot ((e_1' + 4(g + r)) + \sigma_1' + e_2' + \sigma_2') = 24k_-$.\\
\end{center}

The numbers $e_i, \sigma_i, e_i', \sigma_i'$ $i = 1, 2$ are as in the statement of Theorem \ref{Theorem 19}, and they correspond to the Euler characteristic and signatures of minimal 4-manifolds of Proposition \ref{Proposition 11} used to construct $X(G)$. Such manifolds can be done so that their characteristic numbers satisfy the equalities $(e_1 + \sigma_1 + e_2 + \sigma_2) =  4k_+ + 4(g + r)$ and $(e_1' + \sigma_1' + e_2' + \sigma_2') = 4k_- + 4(g + r)$. Thus, all possible values of $c_1c_2$ are realized by manifolds from Theorem \ref{Theorem 19} and Theorem \ref{Theorem 22}. The corollary now follows from Lemma \ref{Lemma M}.
\end{proof}

\section{Acknowledgements}

R. T. thanks Ronald J. Stern for suggesting useful references that motivated this paper, and Tian-Jun Li for sending him a copy of his preprint \cite{[HL]}. The hospitality of the math department at the University of Toronto during the production of part of this manuscript is much appreciated. R. T. gratefully acknowledges support from the Simons Foundation. J. Y. thanks the math department at the University of Hawaii at Manoa for its hospitality under which part of this paper was written. The authors wish to thank Robert E. Gompf for pointing out the preprint \cite{[A6]} to us, and John Etnyre and Joel Fine for useful observations that helped us improve the paper.

\end{document}